\newtheorem{letthm}{Theorem}
\newtheorem{thm}{Theorem}[section]
\newtheorem{lem}[thm]{Lemma}
\newtheorem{cor}[thm]{Corollary}
\newtheorem{prop}[thm]{Proposition}
\theoremstyle{definition}
\newtheorem{rem}[thm]{Remark}
\newtheorem{defi}[thm]{Definition}
\newtheorem{ex}[thm]{Example}
\title[Growth of attraction rates for iterates of a superattracting germ]{Growth of attraction rates for iterates of a superattracting germ in dimension two}
\date{\today}
\author[W. Gignac]{William Gignac}
\address{Department of Mathematics, University of Michigan, 530 Church St., Ann Arbor, MI 48109, USA}
\email{wgignac@umich.edu}
\author[M. Ruggiero]{Matteo Ruggiero}
\address{Fondation Math\'ematique Jacques Hadamard --
Centre de Math\'ematiques Laurent Schwartz, 
\'Ecole Polytechnique, 
91128 Palaiseau Cedex, France.}
\email{ruggiero@math.polytechnique.fr}
\begin{document}

\maketitle

\begin{abstract} We study the sequence of attraction rates of iterates of a dominant superattracting holomorphic fixed point germ $f\colon (\C^2, 0)\to (\C^2, 0)$. By using valuative techniques similar to those developed by Favre-Jonsson, we show that this sequence eventually satisfies an integral linear recursion relation, which, up to replacing $f$ by an iterate, can be taken to have order at most two. In addition, when the germ $f$ is finite, we show the existence of a bimeromorphic model of $(\C^2,0)$ where $f$ satisfies a weak local algebraic stability condition. \end{abstract}

\section*{Introduction}

Let $f\colon (\C^2,0)\to (\C^2,0)$ be a holomorphic fixed point germ that is dominant (i.e., whose image is not contained in any curve). In local coordinates $(x,y)$ at $0$, the map $f$ can be expanded in a power series of the form $f(x,y) = f_c(x,y) + f_{c+1}(x,y) + \cdots$, where the $f_i$ are homogeneous polynomials of degree $i$ and $f_c\neq 0$. The integer $c = c(f)\geq 1$ is independent of the choice of coordinates $(x,y)$. In this article we will study the sequence $c(f^{n})$ in the case where $f$ is superattracting, that is, when $c(f^n)\to \infty$. Superattracting germs are an active area of research in complex dynamics in several variables, see for instance \cite{MR1275463}, \cite{favre:rigidgerms}, \cite{favre-jonsson:eigenval}, \cite{BEK}, \cite{casasalvero-roe:iteratedinverseimages}, \cite{MZMS}, \cite{gignac:local}, \cite{Rug2}, \cite{ruggiero:rigidification},  and the notes \cite{jonsson:berkovich}.

From the viewpoint of complex dynamics, the sequence $c(f^n)$ is interesting because it gives a measure of the rate at which nearby points are attracted to $0$ under iteration. More precisely, one has that $c(f^n) = \max\{c>0 : \|f(p)\| = O(\|p\|^c)\mbox{ as }p\to 0\}$. For this reason, we call $c(f^n)$ the \emph{attraction rate} of $f^n$. The limit $c_\infty := \lim_{n\to \infty} c(f^n)^{1/n}$, which we call the \emph{asymptotic attraction rate} of $f$, measures the growth of the sequence $c(f^n)$ of attraction rates. Favre-Jonsson show in \cite{favre-jonsson:eigenval} that the quantity $c_\infty$ is a quadratic integer (see also \cite{jonsson:berkovich}), a fact which suggests some regularity in the sequence $c(f^n)$. Our main result confirms this regularity.

\begin{letthm}\label{thm:recursionrelation} Let $f:(\C^2, 0) \rightarrow (\C^2, 0)$ be a dominant, holomorphic, superattracting  fixed point germ. Then the sequence $c(f^n)$ eventually satisfies an integral linear recursion relation. Moreover, up to replacing $f$ by an iterate, the recursion relation can be taken to have order at most $2$.
\end{letthm}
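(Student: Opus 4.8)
The plan is to track the attraction rates through the action of $f$ on the valuative tree, following the valuative philosophy of Favre--Jonsson \cite{favre-jonsson:eigenval}. Recall that $f$ induces a self-map $f_\bullet$ of the valuative tree $\mathcal V$ of valuations centered at $0$, normalized by $\nu(\mathfrak m)=1$, and that the \emph{attraction rate of $f$ along} $\nu$, namely $c(f,\nu):=\nu(f^*\mathfrak m)$, satisfies the cocycle relation $c(g\circ h,\nu)=c(h,\nu)\,c(g,h_\bullet\nu)$. Specializing to $\nu=\nu_{\mathfrak m}$ the multiplicity valuation (so that $c(f^n)=c(f^n,\nu_{\mathfrak m})$) and setting $\nu_k:=f_\bullet^{\,k}\nu_{\mathfrak m}$, iteration of the cocycle gives
\[
c(f^n)=\prod_{k=0}^{n-1}c(f,\nu_k),\qquad\text{so that}\qquad d_k:=\frac{c(f^{k+1})}{c(f^k)}=c(f,\nu_k),
\]
and one further application yields the key identity $d_k\,d_{k+1}=c(f^2,\nu_k)$. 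Hence, writing $h(\nu):=c(f,\nu)=\nu(f^*\mathfrak m)$ and $g(\nu):=c(f^2,\nu)=\nu((f^2)^*\mathfrak m)$ --- valuation functions of two \emph{fixed} ideals --- everything reduces to understanding the orbit $(\nu_k)$ of $\nu_{\mathfrak m}$ together with the restrictions of $h$ and $g$ to it.

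By the eigenvaluation theorem of Favre--Jonsson \cite{favre-jonsson:eigenval}, $f_\bullet$ has a fixed point $\nu_*\in\mathcal V$ with $c_\infty=c(f,\nu_*)$. Using their analysis of the local dynamics of $f_\bullet$ around $\nu_*$ --- and, after replacing $f$ by a suitable iterate so that the relevant local data at $\nu_*$ (the tangent direction along which the orbit accumulates, the exceptional curve in the divisorial case) becomes fixed rather than merely periodic --- I would prove that the orbit $(\nu_k)$ is eventually contained in a single geodesic segment $I$ of $\mathcal V$ ending at $\nu_*$, along which $\nu_k\to\nu_*$ monotonically. Carrying this out requires treating separately the four kinds of eigenvaluation (divisorial, irrational, curve, infinitely singular), and inside the divisorial case the ``ramified'' sub-case in which $f$ has degree $\ge 2$ on the corresponding exceptional curve. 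I expect this to be the main obstacle: one must show not merely that $\nu_*$ attracts the orbit, but that the orbit enters, and then remains in, a neighbourhood on which the dynamics has a clean normal form.

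On a short enough terminal sub-segment $I'\subseteq I$, choose an affine parametrization $t$ of $I'$. Since the valuation function of an ideal restricted to a geodesic is continuous, concave and piecewise $\mathbb Q$-affine, after shrinking $I'$ both $h$ and $g$ are affine in $t$, say $h=pt+q$ and $g=p't+q'$ with $p,q,p',q'\in\mathbb Q$; enlarge the index threshold so that $\nu_k\in I'$ for all large $k$. If $p=0$, then $d_k\equiv q=c_\infty$ for $k\gg 0$; since $c_\infty$ is both rational and (by \cite{favre-jonsson:eigenval}) an algebraic integer, it lies in $\mathbb Z$, and $c(f^{n+1})=c_\infty\,c(f^n)$ for $n\gg 0$ --- an integral recursion of order $1$. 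If $p\ne 0$, then $g$ is an affine function of $h$ on $I'$, say $g=A\,h+B$ with $A=p'/p$, $B=q'-p'q/p\in\mathbb Q$; combining this with $d_k=h(\nu_k)$ and the identity $d_k d_{k+1}=g(\nu_k)$ gives $d_k d_{k+1}=A\,d_k+B$ for $k\gg 0$, whence $c(f^{n+2})=c(f^n)\,d_n d_{n+1}=A\,c(f^{n+1})+B\,c(f^n)$ for $n\gg 0$.

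It remains to show $A,B\in\mathbb Z$ and to bring the order down to $\le 2$ after a possible further squaring. Letting $k\to\infty$ in $d_k d_{k+1}=A d_k+B$ shows that $c_\infty$ is a root of the monic polynomial $\lambda^2-A\lambda-B$; when $c_\infty$ has degree $2$ over $\mathbb Q$ this is its minimal polynomial, so $A,B\in\mathbb Z$. When $c_\infty\in\mathbb Z$, write $\lambda^2-A\lambda-B=(\lambda-c_\infty)(\lambda-\mu)$ with $\mu=A-c_\infty\in\mathbb Q$; an elementary $\ell$-adic argument, using that every $c(f^n)$ is a positive integer, shows that either $\mu\in\mathbb Z$ (so $A=c_\infty+\mu$ and $B=-c_\infty\mu$ are integers) or the $\mu^n$-term in the closed form of $c(f^n)$ vanishes --- reducing to the order-$1$ recursion $c(f^{n+1})=c_\infty c(f^n)$ --- and the one integral possibility not already of order $\le 2$ after one squaring, namely $\mu=-c_\infty$, is removed by replacing $f$ by $f^2$; this gives the second assertion. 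For the first assertion, let $r$ be the exponent of the iterate used above and $\lambda^2-A_0\lambda-B_0\in\mathbb Z[\lambda]$ the resulting characteristic polynomial; since the analysis depends on $\nu_*$ and $f^r$ but not on the starting valuation, each subsequence $(c(f^{rn+j}))_n$, $0\le j<r$, satisfies this same order-$\le 2$ integral recursion, and therefore $c(f^n)$ satisfies the integral recursion with characteristic polynomial $\lambda^{2r}-A_0\lambda^r-B_0$. Apart from the orbit-convergence step flagged above, the delicate point is thus the interplay between the a priori real-analytic geometry near $\nu_*$ and integrality --- controlled by the lattice structure of divisorial valuations constraining $p,q,p',q'$, together with the integrality of the sequence $c(f^n)$ itself.
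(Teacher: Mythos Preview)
Your outline follows the paper's valuative strategy, and the cocycle identity $d_kd_{k+1}=c(f^2,\nu_k)$ combined with the piecewise-affine structure of $\nu\mapsto c(f,\nu)$ and $\nu\mapsto c(f^2,\nu)$ on segments is the right engine once the orbit has settled. There is, however, a genuine gap beyond the convergence step you flag: your claim that, after passing to an iterate, the orbit $(\nu_k)$ is eventually contained in a \emph{single} geodesic segment ending at $\nu_*$ is false in general. When $\nu_*$ is divisorial, the tangent map $df_\bullet$ at $\nu_*$ is a rational self-map of $\mathbb P^1$, and the tangent directions of $f_\bullet^n\nu_{\mathfrak m}$ need not become eventually periodic --- so no iterate forces the orbit into one segment (see \hyperref[ex:not_stable]{Example~\ref*{ex:not_stable}}). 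The paper resolves this by a dichotomy (Case~3 of \hyperref[thm:recursioncases]{Theorem~\ref*{thm:recursioncases}}): either the directions $\vec v_n$ land infinitely often in one of the finitely many directions where $c(f,-)$ is non-constant, in which case they \emph{are} eventually periodic and your affine argument applies to an iterate; or they eventually avoid those directions, and then $c(f,f_\bullet^n\nu_{\mathfrak m})=c_\infty\in\mathbb Z$ for large $n$, giving the order-$1$ recursion directly. The convergence result you defer is indeed the core of the paper (\hyperref[thm:basins]{Theorem~\ref*{thm:basins}}, proved in \S3--4 via an equicontinuity theorem for $f_\bullet$), and it feeds precisely this dichotomy rather than the stronger single-segment conclusion you assume.

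Where you genuinely diverge from the paper is in the integrality step, and your route is a legitimate alternative. The paper obtains integer coefficients geometrically (\hyperref[lem:geometric]{Lemma~\ref*{lem:geometric}}): on an invariant segment $[\nu_E,\nu_F]$ with $E,F$ transverse exceptional primes in a blowup, the unnormalized monomial weights $(r_n,s_n)$ of $f_*^n\nu$ evolve by a fixed $2\times2$ matrix $M$ whose entries are intersection numbers, hence integers, and the recursion is $c_{n+2}=\mathrm{tr}(M)\,c_{n+1}-\det(M)\,c_n$. You instead produce a \emph{rational} recursion from the affine identity $g=Ah+B$ and upgrade to integrality a posteriori, via the quadratic-integer property of $c_\infty$ together with an $\ell$-adic argument on the integer sequence $c(f^n)$. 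This works and avoids the blowup machinery, though the case $c_\infty\in\mathbb Z$ must be written out with care (the repeated-root case $\mu=c_\infty$ and the vanishing-coefficient case), and your remark about $\mu=-c_\infty$ is superfluous since that already yields an integral order-$2$ recursion. One further caution on your last step: different residues $j$ may land the $f^r_\bullet$-orbit of $\nu_j=f_\bullet^j\nu_{\mathfrak m}$ in different invariant segments at $\nu_*$, with a priori different $(A_0,B_0)$; the interleaved sequence is still eventually linearly recurrent over $\mathbb Z$, but not in general with the single characteristic polynomial $\lambda^{2r}-A_0\lambda^r-B_0$ you write down.
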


When $f$ is a \emph{finite} germ, that is, when $f$ is finite-to-one near $0$, \hyperref[thm:recursionrelation]{Theorem~\ref*{thm:recursionrelation}} has a closely related geometric counterpart (\hyperref[thm:weakstability]{Theorem~\ref*{thm:weakstability}}), owing to the fact that the attraction rates $c(f^n)$ can be computed as intersection numbers in certain bimeromorphic models of $(\C^2,0)$.
Before being able to state \hyperref[thm:weakstability]{Theorem~\ref*{thm:weakstability}}, we need a small amount of terminology.
In this article, a \emph{modification} of  $(\C^2,0)$ is a proper, bimeromorphic map $\pi\colon X\to (\C^2,0)$ that is a biholomorphism over $\C^2\smallsetminus\{0\}$; a modification $\pi'\colon X'\to (\C^2,0)$ \emph{dominates} a modification $\pi\colon X\to (\C^2,0)$ if there is a holomorphic map $\mu\colon X'\to X$ such that $\pi \circ \mu  = \pi'$.
Given a modification $\pi\colon X\to (\C^2,0)$, we let $\Div_\Q(\pi)$ denote the $\Q$-vector space generated by the irreducible components of $\pi^{-1}(0)$.
The meromorphic maps $f^n\colon X\dashrightarrow X$ induce $\Z$-linear pullback maps $(f^n)^*\colon \Div_\Q(\pi)\to \Div_\Q(\pi)$.
In general these pullbacks won't be functorial in the sense that $(f^n)^* = (f^*)^n$, but \hyperref[thm:weakstability]{Theorem~\ref*{thm:weakstability}} shows that one can always choose the modification $\pi$ in such a way that the pullbacks $(f^n)^*$ are ``eventually functorial".

\begin{letthm}\label{thm:weakstability}
Let $f\colon (\C^2,0)\to (\C^2,0)$ be a finite, holomorphic, superattracting fixed point germ.
Then there is a modification $\pi\colon X\to (\C^2,0)$ dominating the blowup of the origin in $\C^2$, and an integer $N\geq 0$ such that
\[(f^n)^* = (f^*)^{n-N}(f^N)^*\]
on $\Div_\Q(\pi)$ for any $n>N$.
Moreover, $X$ can be taken to have at worst Hirzebruch-Jung quotient singularities, and, if $f$ is replaced by $f^2$, one can take $X$ to be smooth.
\end{letthm}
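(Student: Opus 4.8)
The proof is by constructing $X$ explicitly, from the action of $f$ on the valuative tree $\mathcal V$ of $(\C^2,0)$ — the same machinery that drives \hyperref[thm:recursionrelation]{Theorem~\ref*{thm:recursionrelation}}. Since $f$ is finite it induces a continuous tree map $f_\bullet\colon\mathcal V\to\mathcal V$, and, following Favre--Jonsson~\cite{favre-jonsson:eigenval}, $f$ has an eigenvaluation $v_*\in\mathcal V$: a normalized valuation with $f_\bullet v_*=v_*$, towards which the relevant forward orbits of $f_\bullet$ converge. For a finite superattracting germ $v_*$ is either divisorial (this is the case $c_\infty\in\Z$) or irrational quasimonomial (this is the case $c_\infty$ a genuine quadratic integer); this dichotomy controls the local behaviour of $f$ at the center of $v_*$, and hence the geometry of the model to be built.

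I would first reduce the conclusion to geometry. On a modification $\pi\colon X\to(\C^2,0)$, with $f$ also denoting the induced meromorphic selfmap of $X$, the operators $(f^n)^*$ on $\Div_\Q(\pi)$ differ from the composites $(f^*)^{n-k}(f^k)^*$ by corrections supported on the curves contracted by iterates of $f$ whose images meet the indeterminacy locus $\operatorname{Ind}(f)$; the local algebraic stability formalism (in the spirit of Diller--Favre and Favre--Jonsson) then shows that
\[(f^n)^* = (f^*)^{n-N}(f^N)^*\quad\text{on }\Div_\Q(\pi)\text{ for all }n>N\]
as soon as the forward orbits of these contracted curves avoid $\operatorname{Ind}(f)$ from time $N$ on. Insisting that $\pi$ also dominate the blowup of the origin — always arranged by blowing up further — makes $c(f^n)$ recoverable from $(f^n)^*$, which is what connects this statement to \hyperref[thm:recursionrelation]{Theorem~\ref*{thm:recursionrelation}}. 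So it suffices to find a modification on which the orbit condition holds.

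To find one, I would start from a modification $X_0$ dominating the blowup of the origin and adapted to $v_*$: if $v_*$ is divisorial, chosen so that $v_*=\operatorname{ord}_{E_*}$ for a prime component $E_*$ of $\pi_0^{-1}(0)$; if $v_*$ is irrational, chosen so that a short closed segment of $\mathcal V$ around $v_*$ is realized and is \emph{toroidal} for $f$, meaning that in suitable formal coordinates near the center of $v_*$ the map $f$ is monomial — this toric geometry being the source of the Hirzebruch--Jung (cyclic quotient) singularities of $X$. The key local input, coming from the classification of the normal form of $f$ at the center of $v_*$ and using finiteness of $f$ in an essential way, is that there is a neighbourhood $U$ of $v_*$ in $\mathcal V$ with $f_\bullet(U)\subseteq U$ on which $f$ has no contracted curve meeting $\operatorname{Ind}(f)$. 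Away from $U$ the model $X_0$ realizes only finitely many divisorial valuations and $f$ has only finitely many indeterminacy points, while every forward orbit of interest lands in $U$ after a bounded number of steps; hence each orbit of a contracted curve can meet the ``bad region'' only finitely often. Blowing $X_0$ up further to realize as honest divisors this finite list of points and curves, together with finitely many of their iterates until the orbits are absorbed into $U$, produces a model $X$ on which the orbit condition holds, hence weak stability. Only the toric blowups near an irrational $v_*$ can be singular, so $X$ has at worst Hirzebruch--Jung singularities; and after replacing $f$ by $f^2$ the local model at the center of $v_*$ becomes ``untwisted'' — the two branches of $\mathcal V$ at $v_*$, equivalently the two torus-fixed points on the relevant weighted blowup, are no longer interchanged — so a toric resolution of those singularities can be carried out $f^2$-equivariantly and $X$ taken smooth.

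The main obstacle is the termination of the final construction: showing that the ``bad region'' is genuinely finite and that only finitely many extra blowups are needed. This rests on two quantitative outputs of the valuative analysis: that $f_\bullet$ contracts towards $v_*$ along $\mathcal V$, so that every relevant orbit enters $U$ after a number of steps bounded uniformly in the orbit; and the local fact that inside $U$ there is no contracted-curve/indeterminacy incidence, which follows from the normal form of $f$ at the center of $v_*$ and genuinely fails without finiteness. Granting these, the verification that only finitely many incidences survive — so that one finite modification works, with an explicit $N$ — and the toric bookkeeping behind the Hirzebruch--Jung and smoothness refinements are routine if somewhat lengthy.
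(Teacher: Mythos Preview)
Your overall strategy---reduce weak stability to an orbit/indeterminacy-avoidance criterion via a composition lemma, then build $X$ from the eigenvaluation data on $\mathcal V$---matches the paper's. But your case analysis has two substantive errors that leave genuine gaps.

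First, your dichotomy ``$v_*$ is divisorial or irrational quasimonomial'' is false for finite superattracting germs. End eigenvaluations do occur (e.g.\ $f(x,y)=(x^a,y^d)$ with $a>d\geq 2$ is finite with curve eigenvaluation $\nu_x$), and more importantly there is a third regime: a nondegenerate \emph{segment} $I\subset\mathcal V$ of fixed points for $f_\bullet^2$, with no single globally attracting $v_*$. The paper's \hyperref[thm:basins]{Theorem~\ref*{thm:basins}} establishes exactly this trichotomy, and its proof of \hyperref[thm:weakstability]{Theorem~\ref*{thm:weakstability}} (via \hyperref[thm:weakstability2]{Theorem~\ref*{thm:weakstability2}}) splits into four cases accordingly: end/irrational, divisorial, segment for $f$, and segment for $f^2$ only. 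You have not addressed the segment cases at all, and your ``every relevant orbit enters $U$'' contraction argument does not apply there.

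Second, and relatedly, you locate the Hirzebruch--Jung singularities and the role of $f^2$ in the wrong place. In the paper, every case with a unique eigenvaluation---including the irrational one---is handled with a \emph{smooth} blowup: near an irrational $v_*$ one finds a smooth fixed point $p$ with $f_\bullet U(p)\subseteq U(p)$, and no weighted or singular blowups are needed. The HJ singularities arise only in the segment-for-$f^2$-but-not-$f$ case. There one must realize divisorial points of $I$ \emph{and} their $f_\bullet$-images as exceptional primes, but $f_\bullet$ moves points of $I$ nontrivially, so in a smooth model the chain of primes over $I$ is not arranged so that $f$ is holomorphic at the nodes; the fix is to contract the intermediate chains, producing HJ points at which \hyperref[lem:indet]{Lemma~\ref*{lem:indet}} shows $f$ \emph{is} holomorphic. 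Replacing $f$ by $f^2$ converts this into the segment-for-$f$ case, where $I$ is pointwise fixed and a smooth model suffices. Your ``untwisting the two branches at an irrational $v_*$'' explanation is not what is going on.
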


In any modification $\pi\colon X\to (\C^2,0)$ dominating the blowup of the origin, there is a divisor $D\in \Div_\Q(\pi)$ such that $c(f^n) = -D\cdot (f^n)^*D$ for all $n\geq1$.
The fact that the attraction rates $c(f^n)$ eventually satisfy an integral linear recursion formula then follows immediately from \hyperref[thm:weakstability]{Theorem~\ref*{thm:weakstability}}, though the fact that, up to replacing $f$ by an iterate, this relation can be taken to have order at most two is not obvious.

In general, we will show by example (see \hyperref[ex:not_stable]{Example~\ref*{ex:not_stable}}) that it is not always possible to find a modification $\pi\colon X\to (\C^2,0)$ for which functoriality $(f^n)^* = (f^*)^n$ holds for all $n\geq 1$.
Finding such an $X$ is a local analogue of finding an \emph{algebraically stable} model for a  global complex dynamical system, a very active area of research, see, for instance, \cite{MR1867314}, \cite{MR2021001},  \cite{MR2140266}, \cite{MR2129771}, \cite{MR2545825}, \cite{favre-jonsson:dynamicalcompactifications}, \cite{MR2861092}, \cite{MR2917145}, and \cite{Lin}.
In this light, \hyperref[thm:weakstability]{Theorem~\ref*{thm:weakstability}} should be viewed as a guarantee that models always exist on which $f$ satisfies a weak local algebraic stability condition.

The techniques used in this article are not complex analytic.
Indeed, Theorems \ref{thm:recursionrelation} and \ref{thm:weakstability} remain valid when one replaces $\C$ by any field $k$ of characteristic $0$ (without loss of generality algebraically closed, since working over $\ol{k}$ does not change the statements) and $f$ by any formal fixed point germ.
In purely algebraic terms, one can rephrase \hyperref[thm:recursionrelation]{Theorem~\ref*{thm:recursionrelation}} as follows.
Let $(R,\mf{m})$ be a complete regular local ring of dimension $2$ and residue characteristic $0$, and suppose that $f\colon \mathrm{Spec}(R)\to \mathrm{Spec}(R)$ is a dominant morphism which is ``superattracting" in the sense that there is an integer $r \geq 1$ for which $(f^r)^*\mf{m}\subseteq\mf{m}^2$.
Then the sequence $c(f^n):= \max\{ i : (f^n)^*\mf{m}\subseteq\mf{m}^i\}$ eventually satisfies an integral linear recursion relation.
It should be noted that, while Theorems \ref{thm:recursionrelation} and \ref{thm:weakstability} may be true over fields of characteristic $p>0$ as well, we will use the characteristic $0$ hypothesis in an essential way, see \hyperref[rem:JacCharp]{Remarks~\ref*{rem:JacCharp}} and \ref{rem:charp}.

To prove Theorems \ref{thm:recursionrelation} and \ref{thm:weakstability}, we will use the valuative techniques developed by Favre-Jonsson in \cite{favre-jonsson:eigenval}, \cite{favre-jonsson:valtree}, and \cite{favre-jonsson:dynamicalcompactifications}.
In fact, analogues of our theorems are proved in \cite{favre-jonsson:dynamicalcompactifications} for the sequence $\deg(F^{n})$, where $F\colon \C^2\to \C^2$ is a polynomial map; our theorems should therefore be viewed as local versions of those global results.
We will deduce Theorems \ref{thm:recursionrelation} and \ref{thm:weakstability} from an analysis of the dynamics induced by $f$ on a certain space $\mc{V}$ of valuations.
It is shown in \cite{favre-jonsson:eigenval} that there are fixed points for these dynamics which are in some sense attracting.
Much of the work done in this article will be in showing that the basins of attraction of these fixed points are large.
To prove this, the main tool we use is a theorem regarding the equicontinuity of the dynamics on $\mc{V}$, see \hyperref[thm:equicontinuity]{Theorem~\ref*{thm:equicontinuity}}.
This approach differs from the one taken in \cite{favre-jonsson:dynamicalcompactifications}, where the authors use the Hilbert space methods of \cite{boucksom-favre-jonsson:degreegrowthmeromorphicsurfmaps}, which unfortunately do not carry over well to our local setting.

Finally, we should note that the sequence $c(f^n)$ has recently appeared in the work of Casas-Alvero and Ro\'{e} \cite{casasalvero-roe:iteratedinverseimages}, where \hyperref[thm:recursionrelation]{Theorem~\ref*{thm:recursionrelation}} is proved for a specific class of germs $f$ by relating the sequence $c(f^n)$ to the behavior of certain local intersection numbers under the dynamics of $f$.
This highlights a link between the growth of attraction rates $c(f^n)$ and problems relating to the growth of local intersection numbers (see \cite[\S5]{arnold:probltheoriesystemedynamiques}, \cite{arnold:boundsmilnornumbersintersections}, and \cite{gignac:local}). 

The basic outline of this article is as follows.
First, in \S1 and \S2, we will review some background on the valuation space $\mc{V}$, called the \emph{valuative tree}, on which we will be working.
In \S1, the focus is on describing the structure of $\mc{V}$, while in \S2 we discuss the dynamics induced by $f$ on $\mc{V}$.
The heart of this article is in \S3 and \S4, where we show that there is a collection of fixed points for the dynamics of $f$ on $\mc{V}$ which attract most points of $\mc{V}$ under iteration.
In \S5 and \S6 we will use the results of \S3 and \S4 to prove Theorems \ref{thm:recursionrelation} and \ref{thm:weakstability}.
Finally, \S7 is devoted to worked examples.
 
\subsection*{Acknowledgements}
The authors would like to thank Charles Favre and Mattias Jonsson for their invaluable guidance, insight, and encouragement during the course of this project.
The first author was supported by the grants DMS-0602191, DMS-0901073, and DMS-1001740; moreover, a portion of this work was done while the first author was a visiting researcher at the Institute for Computational and Experimental Research Mathematics.
The second author was supported by a bourse of the Fondation Math\'ematique Jacques Hadamard.

\section{The valuative tree}

In this section we give an overview of the structure of the \emph{valuative tree} $\mc{V}$, a space of valuations on which we will be studying dynamics.
There are two aspects of this structure we will emphasize.
First, $\mc{V}$ is a combinatorial object; specifically, it is an ordered tree.
Second, $\mc{V}$  is a geometric object in the sense that it encodes much of the local geometry at $0\in \C^2$.
The most detailed references for this subject are the monograph \cite{favre-jonsson:valtree} and the notes \cite{jonsson:berkovich}.
The former stresses the combinatorial aspects of $\mc{V}$, while the latter prefers the geometric approach, a perspective that has been useful in applications, see \cite{favre-jonsson:dynamicalcompactifications}, \cite{favre:holoselfmapssingratsurf} for dynamical applications, \cite{favre-jonsson:valanalplanarplurisubharfunct}, \cite{boucksom-favre-jonsson:valuationsplurisubharsing} for applications to singularities of plurisubharmonic functions, and \cite{favre-jonsson:valmultideals}, \cite{jonsson-mustata:asymptotic}, \cite{jonsson-mustata:openness}, \cite{hu:multiplier}, \cite{hu:logcanonical} for applications to multiplier ideals in analytic and algebraic geometry. 

\subsection{Definition and tree structure}

The valuative tree $\mc{V}$ is a subspace of a certain space $\hat{\mc{V}}^*$ of semivaluations on the ring of formal power series $\C\llbracket x,y\rrbracket$.
Throughout this article, we will denote by $\mf{m}$ the unique maximal ideal of $\C\llbracket x,y\rrbracket$. 

\begin{defi}
A \emph{centered semivaluation} on $\C\llbracket x,y\rrbracket$ is a map $\nu\colon\C\llbracket x,y\rrbracket\to [0,+\infty]$ such that
\begin{enumerate}
\item[1.] $\nu(0) = +\infty$ and $\nu|_{\C^*} \equiv 0$,
\item[2.] $\nu(\phi\psi) = \nu(\phi) + \nu(\psi)$ for all $\phi,\psi\in \C\llbracket x,y\rrbracket$,
\item[3.] $\nu(\phi + \psi)\geq \min\{\nu(\phi), \nu(\psi)\}$ for all $\phi,\psi\in \C\llbracket x,y\rrbracket$, and 
\item[4.] $\nu(\phi)>0$ if and only if $\phi\in \mf{m}$.
\end{enumerate}
Given such a $\nu$ and any ideal $\mf{a}\subseteq \C\llbracket x,y\rrbracket$, we let $\nu(\mf{a}) := \min\{\nu(\phi) : \phi\in \mf{a}\}$.
It is easy to see that the minimum exists; indeed, one need only take the minimum over a (finite) set of generators of $\mf{a}$.

Define $\hat{\mc{V}}^*$ to be the set of all centered semivaluations $\nu$ such that $\nu(\mf{m})<\infty$, and $\mc{V}$ to be the subset consisting of those $\nu$ with $\nu(\mf{m}) = 1$.
The set $\mc{V}$ is called the \emph{valuative tree}.
Note that positive scalar multiples $\lambda \nu$ of centered semivaluations $\nu$ are again centered semivaluations.
From this it follows that $\hat{\mc{V}}^*$ is a cylinder over $\mc{V}$, i.e., $\hat{\mc{V}}^*\cong \mc{V}\times (0, +\infty)$.
\end{defi}

\begin{ex}
Perhaps the most important example of a semivaluation in $\mc{V}$ is the $\mf{m}$-adic valuation $\ord_0$, which is given by $\ord_0(\phi) := \max\{k\in \N : \phi\in \mf{m}^k\}$ for all $\phi\in \C\llbracket x,y\rrbracket$.
It has the property that $\ord_0(\phi)\leq \nu(\phi)$ for all $\nu\in \mc{V}$ and all $\phi\in \C\llbracket x,y\rrbracket$.
\end{ex}

We equip $\hat{\mc{V}}^*$ with the partial order $\leq$ defined by setting $\nu\leq\mu$ if $\nu(\phi)\leq \mu(\phi)$ for all $\phi\in \C\llbracket x,y\rrbracket$.
Since $\mc{V}\subset\hat{\mc{V}}^*$, this partial order restricts to $\mc{V}$.
The utility of the valuative tree $\mc{V}$ lies partly in its rich poset structure under $\leq$.
It is a nontrivial fact (see \cite[\S3.2]{favre-jonsson:valtree} or \cite[\S7.6]{jonsson:berkovich}) that $(\mc{V},\leq)$ is a \emph{complete rooted tree} in the sense that
\begin{enumerate}
\item[1.] $\mc{V}$ has a unique minimal element, or \emph{root}, namely $\ord_0$,
\item[2.] for any $\nu \neq \ord_0$, the set $\{\mu \in \mc{V} : \mu \leq \nu\}$ is order isomorphic to a closed interval in $\R$,
\item[3.] any two points $\nu,\mu\in \mc{V}$ admit an infimum $\nu\wedge \mu\in \mc{V}$, and
\item[4.] any totally ordered subset of $\mc{V}$ has a least upper bound in $\mc{V}$.
\end{enumerate}

In addition to this poset structure, $\hat{\mc{V}}^*$ has a natural topology, namely the weakest topology for which each of the evaluation maps $\mathrm{ev}_\phi\colon \hat{\mc{V}}^*\to [0,+\infty]$ given by $\nu\mapsto \nu(\phi)$ are continuous.
This is called the \emph{weak topology}.
The valuative tree $\mc{V}$ is compact Hausdorff in the weak topology, and $\hat{\mc{V}}^*\cong \mc{V}\times (0, +\infty)$ is locally compact Hausdorff.
The weak topology is not metrizable.

\subsection{Classification of points of $\hat{\mc{V}}^*$} 

There is a classification of all semivaluations $\nu\in \hat{\mc{V}}^*$ into four different types, called \emph{divisorial}, \emph{irrational}, \emph{curve}, and \emph{infinitely singular} valuations.
The details of this classification will not be needed in this work, so we restrict ourselves to a rough outline and refer to \cite[\S2.2]{favre-jonsson:valtree} and \cite[\S7.7]{jonsson:berkovich} for details.
Throughout this article when we refer to a blowup $\pi\colon X\to \C^2$ of the plane over $0$, we mean a composition of point blowups over $0$.
By an \emph{exceptional prime} of $\pi$, we mean a component of $\pi^{-1}(0)$.

\subsubsection*{Divisorial valuations}
Let $\pi\colon X\to \C^2$ be a blowup of the plane over $0$, and let $E$ be an exceptional prime of $\pi$.
From this information one can construct a valuation $\ord_E$ on $\C\llbracket x,y\rrbracket$ as follows.
Let $\mc{O}_{X,E}$ be the local ring of $X$ at $E$, and let $\mf{p}_E$ be the maximal ideal in this ring.
For any polynomial $\phi\in \C[x,y],$ define $\ord_E(\phi) := \max\{k\in \N : \phi\circ \pi \in \mf{p}_E^k\}$.
This $\ord_E$ defines a valuation on $\C[x,y]$ with the property that $\ord_E(\phi)\geq 0$ for all $\phi\in \C[x,y]$ and $\ord_E(\phi)>0\iff \phi\in \mf{m}$.
This property guarantees that $\ord_E$ extends uniquely to a valuation $\ord_E$ on the $\mf{m}$-adic completion $\C\llbracket x,y\rrbracket$ of $\C[x,y]$.

Any valuation $\nu\in \hat{\mc{V}}^*$ of the form $\lambda \ord_E$ for some $\lambda>0$ is called \emph{divisorial}.
The valuation $\ord_E$ will not in general lie in $\mc{V}$, and must be normalized to give a valuation in $\mc{V}$.
We will always denote by $b_E$ the constant such that $b_E^{-1}\ord_E\in \mc{V}$.
The $\mf{m}$-adic valuation $\ord_0$ is an example of a divisorial valuation in $\mc{V}$.
Indeed, $\ord_0 = \ord_{E}$ where $E$ is the exceptional prime obtained from blowing up the origin in $\C^2$ a single time.

\subsubsection*{Irrational valuations}
Let $\pi\colon X\to \C^2$ be a blowup of the plane over $0$, and suppose that $E$ and $F$ are two exceptional primes of $\pi$ which intersect at a point $p$.
Let $(z,w)$ be local coordinates of $X$ at $p$ such that $E = \{z = 0\}$ and $F = \{w = 0\}$, and let $r,s>0$ be real numbers.
From this information one can construct a valuation $\nu$ on $\C\llbracket x,y\rrbracket$ as follows.
For any $\phi\in \C\llbracket x,y\rrbracket$, write $\phi\circ \pi$ as a formal power series in the coordinates $(z,w)$, say $\phi\circ \pi = \sum \lambda_{ij}z^iw^j$.
One then defines $\nu(\phi) := \min\{ri + sj : \lambda_{ij}\neq 0\}$. 

Such a valuation $\nu$ always lies in $\hat{\mc{V}}^*$.
If $r$ and $s$ are rationally dependent, it turns out that $\nu$ will be a divisorial valuation.
However, if $r$ and $s$ are rationally independent, then $\nu$ is not divisorial, and instead we call it an \emph{irrational valuation}.

\subsubsection*{Curve valuations}
Suppose that $\phi\in \mf{m}$ is an irreducible element of $\C\llbracket x,y\rrbracket$.
It defines an irreducible formal curve germ $C := \{\phi = 0\}$ in $\C^2$ at $0$.
One then obtains a semivaluation in $\nu\in \hat{\mc{V}}^*$ in the following manner: for $\psi\in \C\llbracket x,y\rrbracket$, we define $\nu(\psi)$ to be the order of vanishing of the restriction $\psi|_C$ at the origin.
The semivaluation $\nu$ takes the value $+\infty$ precisely on the ideal $(\phi)$.
In general $\nu\notin \mc{V}$; we will denote by $\nu_\phi$ the normalized semivaluation $\nu_\phi = \nu(\mf{m})^{-1}\nu\in \mc{V}$.
Abusing terminology slightly, any semivaluation of the form $\lambda\nu_\phi$ is called a \emph{curve valuation}, even though strictly speaking it is not a valuation.

\subsubsection*{Infinitely singular valuations}
Any $\nu\in \hat{\mc{V}}^*$ that does not fit into one of the previous three categories is called an \emph{infinitely singular valuation}.
These have an interpretation in terms of generalized Puiseux series (see \cite[\S1.5.7]{favre-jonsson:valtree}), but we omit the details here.
We note, however, that infinitely singular valuations are always valuations instead of just semivaluations.

\subsubsection*{Quasimonomial valuations and ends}
An element $\nu\in \hat{\mc{V}}^*$ is called \emph{quasimonomial} if it is divisorial or irrational, and is called an \emph{end} if it is a curve valuation or an infinitely singular valuation.
The latter terminology alludes to the fact that curve valuations and infinitely singular valuations are exactly the ends of the tree $\mc{V}$, i.e., the maximal elements in the partial order $\leq$.

\subsection{The valuative tree and geometry}
We have just seen that quasimonomial valuations arise from geometric data, namely, from the geometry of blowups over $0$.
In this way, $\hat{\mc{V}}^*$ encodes information about the local geometry of $\C^2$ at $0$.
We now briefly sketch another way that quasimonomial valuations interact with the local geometry of $\C^2$ at $0$.

Let $\nu\in \hat{\mc{V}}^*$ be a quasimonomial valuation and let $\pi\colon X\to \C^2$ be a blowup of the plane over $0$.
We denote by $\Div(\pi)$ the free abelian group on the exceptional primes of $\pi$.
One can evaluate $\nu$ on divisors $D\in \Div(\pi)$ in the following way.
Let $p\in X$ be the \emph{center} of $\nu$ in $X$.
Then $\nu$ defines a valuation on the local ring $\mc{O}_{X,p}$.
If $g$ is a local defining equation of $D$ at $p$, we set $\nu(D) := \nu(g)$.
Observe that $\nu\colon \Div(\pi)\to \R$ is $\Z$-linear.
Since the intersection pairing on $\Div(\pi)$ is non-degenerate, it follows that there is an $\R$-divisor $Z_{\nu,\pi}\in \R\otimes_\Z\Div(\pi)$ such that $\nu(D) = (Z_{\nu,\pi}\cdot D)$.
In this way $\nu$ defines a divisor $Z_{\nu,\pi}$ on every blowup $\pi\colon X\to \C^2$ over $0$.
This construction is carried out in detail in \cite[\S1.2]{favre:holoselfmapssingratsurf} and (in a slightly different setting) in the appendix of \cite{favre-jonsson:dynamicalcompactifications}.

In this article, we will only use this construction in the case where $\nu$ is divisorial, so we restrict to that case now.
If $\pi\colon X\to \C^2$ is a blowup over $0$ such that $\nu = \lambda \ord_E$ for some exceptional prime $E$ of $X$, then $Z_{\nu, \pi} = \lambda \check{E}$, where $\check{E}\in \Div(\pi)$ is the unique divisor such that $(\check{E}\cdot E) = 1$ and $(\check{E}\cdot F) = 0$ for all exceptional primes $F\neq E$ of $\pi$.
If $\pi'\colon X'\to \C^2$ is a blowup over $0$ that dominates $\pi$, that is, if there is a holomorphic map $\mu\colon X'\to X$ such that $\pi' = \pi\circ \mu$, then $Z_{\nu,\pi} = \mu_*Z_{\nu,\pi'}$ and $Z_{\nu,\pi'} = \mu^*Z_{\nu,\pi}$.
Finally, we note that for any $\pi$, one has $\nu(\mf{m}) = -(Z_{\nu,\pi}\cdot Z_{\ord_0,\pi})$.


\subsection{Tangent vectors}
Let $\nu,\mu\in \mc{V}$ be any two semivaluations.
The \emph{segment between $\nu$ and $\mu$} is the set $[\nu,\mu]\subset \mc{V}$ defined as follows.
If $\nu\leq \mu$, then $[\nu,\mu]:=\{\eta\in \mc{V} : \nu\leq \eta\leq \mu\}$.
For general $\nu,\mu$, one sets $[\nu,\mu]:= [\nu\wedge\mu, \nu]\cup[\nu\wedge\mu, \mu]$.
The segment $[\nu,\mu]$ is the minimal path in $\mc{V}$ connecting $\nu$ and $\mu$.
In the weak topology, it is homeomorphic to $[0,1]$ whenever $\nu\neq \mu$.

\begin{defi}
A \emph{subtree} of $\mc{V}$ is a subset $T\subseteq \mc{V}$ such that if $\nu,\mu\in T$, then $[\nu,\mu]\subseteq T$.
\end{defi}

\begin{defi}
Fix a semivaluation $\nu\in \mc{V}$.
Define an equivalence relation on $\mc{V}\setminus\{\nu\}$ by declaring $\mu_1$ and $\mu_2$ equivalent if $[\nu, \mu_1]\cap [\nu,\mu_2]\neq \{\nu\}$.
An equivalence class under this relation is called a \emph{tangent vector} at $\nu$.
The set of all such equivalence classes is the \emph{tangent space} of $\mc{V}$ at $\nu$.
\end{defi}

The tangent space at $\nu$ should be thought of as the collection of ``branches" in $\mc{V}$ leaving $\nu$.
Two semivaluations $\mu_1,\mu_2\in \mc{V}$ are representatives of the same tangent vector at $\nu$ if and only if they belong to the same branch leaving $\nu$.
It is typical to denote a tangent vector using the vector notation $\vec{v}$, and the set of all $\mu$ in the direction of $\vec{v}$ by $U(\vec{v})$.
The sets $U(\vec{v})$ are weakly open. 

If $\nu\in \mc{V}$ is a curve valuation or an infinitely singular valuation, then the tangent space at $\nu$ consists of only one tangent vector, because such $\nu$ are ends of the valuative tree.
The tangent space of an irrational valuation $\nu$ consists of exactly two tangent vectors.
Informally, irrational valuations are not branching points of $\mc{V}$.
The only branching points of $\mc{V}$ are the divisorial valuations.
The tangent space at a divisorial valuation is quite large: if $\nu = b_E^{-1}\ord_E$ is divisorial, then the tangent space at $\nu$ is in one-to-one correspondence with the points of $E$ (see \cite[Theorem B.1]{favre-jonsson:valtree}).

\subsection{Parameterizations}
While the weak topology is the most natural topology on $\mc{V}$, it will prove useful to consider other topologies, induced by \emph{parameterizations} of $\mc{V}$.

\begin{defi}
A \emph{parameterization} on $\mc{V}$ is a monotone function $a\colon \mc{V}\to [-\infty,\infty]$ such that the restriction of $a$ to any segment $[\nu,\mu]$ with $\nu\leq \mu$ is either an order preserving or order reversing isomorphism onto a closed subinterval of $[-\infty,\infty]$.
\end{defi}

Given a parameterization $a$ on $\mc{V}$, we will denote by $\mc{V}_a$ the collection of all semivaluations $\nu\in \mc{V}$ such that $|a(\nu)|<\infty$.
The parameterization induces a metric $d_a$ on $\mc{V}_a$, defined by
\[
d_a(\nu,\mu) := |a(\nu) - a(\nu\wedge\mu)| + |a(\mu) - a(\nu\wedge\mu)|.
\]
The topology given by such a metric is strictly stronger than the weak topology.
Two very useful parameterizations of $\mc{V}$ are the \emph{thinness} and \emph{skewness} parameterizations, defined below.

\begin{defi}\label{def:skewthin}
The \emph{skewness} parameterization $\alpha\colon \mc{V}\to [1,+\infty]$ is defined by
\[\alpha(\nu)  := \sup_{\phi\in \mf{m}} \nu(\phi)/\ord_0(\phi).\]
The \emph{thinness} parameterization $A\colon \mc{V}\to [2,+\infty]$ is defined by 
\[A(\nu) := 2 + \int_{\ord_0}^\nu m(\mu)\,d\alpha(\mu),\]
where $m(\mu) := \min\{\ord_0(\phi): \phi\in \mf{m}$ is irreducible and $\mu\leq \nu_\phi\}$.
\end{defi}

A quasimonomial valuation $\nu$ always has finite thinness and skewness.
Moreover, the thinness and skewness are rational if $\nu$ is divisorial, and irrational if $\nu$ is irrational.
A curve semivaluation always has infinite thinness and skewness.
In general nothing can be said about the finiteness of the thinness or skewness of an infinitely singular valuation.
It follows from the definition of thinness that $\alpha(\nu)\leq A(\nu)$ for all $\nu\in \mc{V}$.
One of the most important properties of the skewness, proved in \cite[Proposition 3.25]{favre-jonsson:valtree}, is the following. 

\begin{prop}\label{prop:skewness_eval}
Let $\nu\in \mc{V}$, and let $\phi\in \C\llbracket x,y\rrbracket$ be irreducible.
Then $\nu(\phi) = \alpha(\nu\wedge \nu_\phi)\ord_0(\phi)$, where $\nu_\phi$ is the curve semivaluation corresponding to $\phi$.
\end{prop}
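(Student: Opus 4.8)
The plan is to isolate two facts which together give the identity. The first is a \emph{retraction property}: for every $\nu\in\mc{V}$,
\[\nu(\phi)=(\nu\wedge\nu_\phi)(\phi),\]
so that $\nu(\phi)$ depends on $\nu$ only through the point $\nu\wedge\nu_\phi\in[\ord_0,\nu_\phi]$ at which the segment from $\ord_0$ to $\nu$ leaves $[\ord_0,\nu_\phi]$. The second is that the supremum defining skewness in \hyperref[def:skewthin]{Definition~\ref*{def:skewthin}} is \emph{realized by $\phi$ all along $[\ord_0,\nu_\phi]$}: for every $\mu\le\nu_\phi$,
\[\mu(\phi)=\alpha(\mu)\,\ord_0(\phi).\]
Granting both, the Proposition follows by applying the second fact to $\mu:=\nu\wedge\nu_\phi$ (which satisfies $\mu\le\nu_\phi$) and then the retraction property to rewrite $(\nu\wedge\nu_\phi)(\phi)$ as $\nu(\phi)$. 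Two reductions streamline matters: if $\nu=\nu_\phi$ then both sides equal $+\infty$ since curve valuations have infinite skewness, so we may assume $\nu\neq\nu_\phi$; then $\nu\wedge\nu_\phi$ and every valuation below it are quasimonomial (not ends of $\mc{V}$), and in the nontrivial case $\nu\not\le\nu_\phi$ the point $\nu\wedge\nu_\phi$ carries the three distinct tangent directions toward $\nu$, toward $\nu_\phi$, and toward $\ord_0$, hence is a branching point and so divisorial.

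For the retraction property, only the inequality $\nu(\phi)\le(\nu\wedge\nu_\phi)(\phi)$ is at issue (the reverse being immediate from $\nu\wedge\nu_\phi\le\nu$), and only when $\nu\not\le\nu_\phi$; write $\mu:=\nu\wedge\nu_\phi$. First take $\nu$ divisorial. Choose a modification $\pi\colon X\to\C^2$ over $0$ that realizes both $\mu=b_{E_\mu}^{-1}\ord_{E_\mu}$ and $\nu$ divisorially and resolves $C_\phi=\{\phi=0\}$, so $\widetilde{C_\phi}$ is smooth and transverse to $\pi^{-1}(0)$; since $\mu=\nu\wedge\nu_\phi$, we may also arrange that $\widetilde{C_\phi}$ is disjoint from the exceptional primes lying over the tangent direction of $\nu$ at $\mu$. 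As $\operatorname{div}(\phi\circ\pi)=\widetilde{C_\phi}+\sum_j\ord_{E_j}(\phi)E_j$ is a function divisor, intersecting with $\check E$ gives $\ord_E(\phi)=-(\check E\cdot\widetilde{C_\phi})$ for every exceptional prime $E$, hence $\eta(\phi)=-(Z_{\eta,\pi}\cdot\widetilde{C_\phi})$ for every divisorial $\eta$ realized by $\pi$; since the difference $Z_{\nu,\pi}-Z_{\mu,\pi}$ is supported on the exceptional primes over the direction of $\nu$, which are disjoint from $\widetilde{C_\phi}$, we conclude $\nu(\phi)=\mu(\phi)$. For general $\nu$, approximate from below by divisorial valuations on the segment $[\mu,\nu]$: each such $\eta$ still has $\eta\wedge\nu_\phi=\mu$, so $\eta(\phi)=\mu(\phi)$ by the divisorial case, and $\nu(\phi)=\sup_\eta\eta(\phi)=\mu(\phi)$ by weak continuity of $\mathrm{ev}_\phi$ along increasing sequences.

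For the second fact, $\mu(\phi)\le\alpha(\mu)\,\ord_0(\phi)$ is the definition of $\alpha$, so it remains to show $\mu(\psi)/\ord_0(\psi)\le\mu(\phi)/\ord_0(\phi)$ for all $\psi\in\mf{m}$; additivity of $\mu$ and $\ord_0$ reduces this to $\psi$ irreducible. If $\mu\le\nu_\psi$, this is the \emph{core assertion} that $\mu(\psi)/\ord_0(\psi)$ is the same for every irreducible $\psi$ with $\mu\le\nu_\psi$ (equal then to $\alpha(\mu)$). If instead $\mu\not\le\nu_\psi$, set $\eta:=\mu\wedge\nu_\psi<\mu$; then $\eta\le\nu_\phi$ and $\eta\le\nu_\psi$, so combining the retraction property ($\mu(\psi)=\eta(\psi)$), the core assertion at $\eta$ ($\eta(\psi)/\ord_0(\psi)=\eta(\phi)/\ord_0(\phi)$), and the monotonicity $\eta(\phi)\le\mu(\phi)$ yields $\mu(\psi)/\ord_0(\psi)\le\mu(\phi)/\ord_0(\phi)$. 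Thus everything reduces to the core assertion, which I would prove by reducing $\mu$ to the divisorial case $b_E^{-1}\ord_E$ on a modification $\pi$ resolving both $C_\phi$ and $C_\psi$: by the formula above, $\ord_E(\chi)=-(\check E\cdot\widetilde{C_\chi})$ depends on $\chi$ only through the exceptional prime met by $\widetilde{C_\chi}$, and since $\widetilde{C_\phi}$ and $\widetilde{C_\psi}$ both meet the exceptional locus at points lying over $E$, the ratios $\ord_E(\chi)/\ord_0(\chi)=\ord_E(\chi)/\ord_{E_1}(\chi)$ coincide, by the proximity (Enriques) relations governing the inverse of the intersection matrix of the dual graph of $\pi$.

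I expect the core assertion to be the main obstacle: it is the one genuinely non-formal ingredient, amounting to the combinatorial identity, on the tower of point blowups following a curve germ, that the normalized value $\mu(\chi)/\ord_0(\chi)$ at a quasimonomial valuation $\mu$ is independent of the irreducible $\chi$ with $\nu_\chi\ge\mu$ — equivalently, that the skewness is computed along $[\ord_0,\nu_\phi]$ by $\phi$ itself, which is essentially the content of Max Noether's formula for intersection multiplicities. This is precisely the bookkeeping with proximity inequalities carried out in \cite[\S3]{favre-jonsson:valtree}, on which I would rely; everything else above is a matter of unwinding the definitions and the tree structure.
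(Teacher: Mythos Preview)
The paper does not supply its own proof of this proposition; it simply records the statement and cites \cite[Proposition~3.25]{favre-jonsson:valtree}. Your outline is sound and is essentially the argument one finds in that reference: the decomposition into a retraction step $\nu(\phi)=(\nu\wedge\nu_\phi)(\phi)$ and a realization step $\mu(\phi)=\alpha(\mu)\,\ord_0(\phi)$ for $\mu\le\nu_\phi$ is exactly the structure of the proof there. Your geometric justification of the retraction step via the divisors $Z_{\nu,\pi}$ is correct, though the support claim for $Z_{\nu,\pi}-Z_{\mu,\pi}$ deserves one line of justification; it follows, for instance, from the identity $\alpha(\eta\wedge\eta')=-(Z_{\eta,\pi}\cdot Z_{\eta',\pi})$ the paper quotes immediately after this proposition, since the $G$-coefficient of $Z_\nu-Z_\mu$ is $-b_G[\alpha(\nu\wedge\nu_G)-\alpha(\mu\wedge\nu_G)]$, which vanishes unless $\nu_G$ lies in the direction of $\nu$ from $\mu$. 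You are candid that the ``core assertion'' is the genuinely non-formal ingredient and that you would import it from \cite[\S3]{favre-jonsson:valtree}; since the paper defers the entire proposition to that source, your write-up is, if anything, more detailed than what the paper offers while ultimately resting on the same reference.
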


It should be noted that these are the definitions of thinness and skewness given in \cite{favre-jonsson:valtree}.
It is possible, and in some settings preferable, to give equivalent definitions of thinness and skewness which are of a more geometric flavor.
This is the approach taken in the notes \cite{jonsson:berkovich}, where the term \emph{thinness} is replaced by \emph{log discrepancy} and the definition of skewness differs from ours by a sign, as well as in the paper \cite{favre:holoselfmapssingratsurf}.
In this paper, we will only need the geometric interpretation of skewness for divisorial valuations.
Recall from \S1.3 that if $\pi\colon X\to \C^2$ is a blowup over $0$ and if $\nu\in \mc{V}$ is divisorial, then $\nu$ induces a divisor $Z_{\nu,\pi}$ on $X$.
If $\nu = b_E^{-1}\ord_E$ for some exceptional prime $E$ of $\pi$, then the skewness of $\alpha$ is given by the self-intersection $\alpha(\nu) = -(Z_{\nu,\pi})^2$.
More generally, if $\mu = b_F^{-1}\ord_F$ and $\nu = b_E^{-1}\ord_E$ where $E$ and $F$ are exceptional primes of $\pi$, then $\alpha(\mu\wedge\nu) = -(Z_{\mu,\pi}\cdot Z_{\nu,\pi})$, see \cite[Lemma A.2]{boucksom-favre-jonsson:degreegrowthmeromorphicsurfmaps}.


\section{Dynamics on the valuative tree}

Suppose that $f\colon (\C^2,0)\to (\C^2,0)$ is a holomorphic fixed point germ, with $f = (f_1,f_2)$.
Such a germ is \emph{dominant} if its Jacobian determinant $J_f\in \C\llbracket x,y\rrbracket$ is nonzero.
It is said to be \emph{superattracting} if the derivative $f'(0)$ of $f$ at $0$ is a nilpotent linear map.
In this and later sections we will only concern ourselves with dominant superattracting germs $f$.
Fix such an $f$ now.

Given a $\nu\in \hat{\mc{V}}^*$, one can push forward $\nu$ by $f$ to obtain a centered semivaluation $f_*\nu$, given by $(f_*\nu)(\phi):= \nu(\phi\circ f)$.
In general the semivaluation $f_*\nu$ will not belong to $\hat{\mc{V}}^*$, since one can have that $(f_*\nu)(\mf{m}) = +\infty$.
The value $(f_*\nu)(\mf{m})$ is of central importance in this article, enough so that it warrants a name.

\begin{defi} Let $\nu\in \mc{V}$.
The \emph{attraction rate of $f$ along $\nu$} is the constant $c(f,\nu) := (f_*\nu)(\mf{m})$, which lies in $[1,+\infty]$.
\end{defi}

\begin{ex}
As an example, consider the particular case when $\nu = \ord_0$.
In this case, $c(f,\ord_0) = (f_*\ord_0)(\mf{m}) = \min\{\ord_0(f_1), \ord_0(f_2)\}$ is exactly the quantity $c(f)$ defined in the introduction.
Thus \hyperref[thm:recursionrelation]{Theorem~\ref*{thm:recursionrelation}} is a statement about the sequence of attraction rates $c(f^n,\ord_0)$.
\end{ex}

One easily checks that a valuation $\nu\in \mc{V}$ has $c(f,\nu) = +\infty$ if and only if $\nu$ is a curve valuation $\nu = \lambda \nu_\phi$ such that the formal curve $C = \{\phi = 0\}$ is contracted to $0$ under $f$.
We call such a $\nu$ a \emph{contracted curve valuation}; there are at most finitely many contracted curve valuations for $f$.
See \cite[\S2]{favre-jonsson:eigenval} for an in depth discussion.
The map $c(f,-)\colon \mc{V}\to [1,+\infty]$ is continuous, and has the following useful properties.

\begin{prop}[{\cite[Proposition 3.4]{favre-jonsson:eigenval}}]\label{prop:loc_const}
For any  $\nu\in \mc{V}$, the map $c(f,-)\colon [\ord_0, \nu]\to [1,+\infty]$ is increasing, concave, and piecewise affine with respect to the skewness parameterization.
Let $T$ be the set of $\nu\in \mc{V}$ such that $c(f,-)$ is not locally constant near $\nu$.
Then $T$ is a connected closed subtree of $\mc{V}$ which is finite in the sense that it has finitely many ends.
\end{prop}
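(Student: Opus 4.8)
The plan is to base everything on the explicit description $c(f,\nu) = \nu(\mf{a})$, where $\mf{a} := f^*\mf{m} = (f_1,f_2)\subseteq\C\llbracket x,y\rrbracket$: indeed $(f_*\nu)(\mf{m}) = \min\{\nu(\phi\circ f):\phi\in\mf{m}\}$, and since the elements $\phi\circ f$ with $\phi\in\mf{m}$ generate $\mf{a}$, this minimum is already attained at the two generators $f_1 = x\circ f$ and $f_2 = y\circ f$, so $c(f,\nu) = \min\{\nu(f_1),\nu(f_2)\}$. With this in hand, the whole argument rests on \hyperref[prop:skewness_eval]{Proposition~\ref*{prop:skewness_eval}}.

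First I would treat the behaviour of $c(f,-)$ along a root segment. Let $\phi_1,\dots,\phi_r\in\mf{m}$ be the distinct irreducible factors of $f_1 f_2$, and let $m_{ki}\ge 0$ be the order to which $\phi_i$ divides $f_k$, so that \hyperref[prop:skewness_eval]{Proposition~\ref*{prop:skewness_eval}} yields $\nu(f_k) = \sum_i m_{ki}\,\ord_0(\phi_i)\,\alpha(\nu\wedge\nu_{\phi_i})$. Fix $\nu_0\in\mc{V}$ and parameterize $[\ord_0,\nu_0]$ by $t = \alpha(\nu)$, which is legitimate since skewness is a parameterization. Because $[\ord_0,\nu_0]$ is totally ordered and contains $\nu_0\wedge\nu_{\phi_i}$, one computes $\alpha(\nu\wedge\nu_{\phi_i}) = \min\{t,\,\alpha(\nu_0\wedge\nu_{\phi_i})\}$, which is an increasing, concave, piecewise affine function of $t$ with a single breakpoint. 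Since nonnegative linear combinations and finite minima of increasing concave piecewise affine functions are again of this type, the same holds for $c(f,-) = \min\{\nu(f_1),\nu(f_2)\}$ on $[\ord_0,\nu_0]$. (Monotonicity also follows at once from the definition of $\le$ on $\mc{V}$; the value $+\infty$ is attained only at a contracted-curve end of the segment, where $t\to+\infty$ and the function is eventually linear, so it does not disrupt the argument.)

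Next I would pass to a finite subtree. Let $T_0\subseteq\mc{V}$ be the subtree spanned by $\ord_0$ together with the curve valuations $\nu_{\phi_1},\dots,\nu_{\phi_r}$; it is finite, with ends among the $\nu_{\phi_i}$, and I write $\rho\colon\mc{V}\to T_0$ for the retraction onto it. Since each $\nu_{\phi_i}$ lies in $T_0$, the projection of $\nu$ onto $[\ord_0,\nu_{\phi_i}]$ depends only on $\rho(\nu)$, i.e.\ $\nu\wedge\nu_{\phi_i} = \rho(\nu)\wedge\nu_{\phi_i}$; hence $\nu(f_k) = \rho(\nu)(f_k)$ and therefore $c(f,-) = c(f,-)\circ\rho$. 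As $\rho$ is continuous, this has two consequences: first, $c(f,-)$ is locally constant at every $\nu\notin T_0$ --- it is in fact constant, equal to $c(f,\rho(\nu))$, on the weakly open set $U(\vec{v})$, where $\vec{v}$ is the tangent direction at $\rho(\nu)$ along which $[\ord_0,\nu]$ leaves $T_0$ --- so that $T\subseteq T_0$, and in particular $T$ has finitely many ends; second, for a point $\nu\in T_0$, the map $c(f,-)$ is locally constant at $\nu$ if and only if its restriction $c(f,-)|_{T_0}$ is. Moreover $T$ is closed, since the locus where $c(f,-)$ is locally constant is open.

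It remains to show that $T$ is connected, and this is the step I expect to be the real obstacle, as it is not a purely local statement; the point is that the factorization $c(f,-) = c(f,-)\circ\rho$ just obtained reduces the question to the finite tree $T_0$, where the concavity from the first step can be brought to bear. Concretely, for $\nu\in T_0$ I would argue that, by the concavity and monotonicity of $c(f,-)$ along every root segment passing through $\nu$, the slope of $c(f,-)$ coming into $\nu$ along $[\ord_0,\nu]$ dominates each of its outgoing slopes at $\nu$; consequently $c(f,-)|_{T_0}$ fails to be locally constant at $\nu$ precisely when this incoming slope is positive --- equivalently, precisely when $c(f,-)$ is strictly increasing along $[\ord_0,\nu]$. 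This last condition is inherited by every $\nu'\in[\ord_0,\nu]$, so $\nu\in T$ forces $[\ord_0,\nu]\subseteq T$: the set $T$ is downward closed. A nonempty downward-closed subset of $\mc{V}$ is a union of segments $[\ord_0,\nu]$ all sharing the point $\ord_0$, hence connected, hence a subtree; together with $T\subseteq T_0$ this completes the proof. (If $T$ is empty --- which does happen, e.g.\ for $f = (x^k,y^k)$ --- all the assertions are vacuous.)
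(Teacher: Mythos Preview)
The paper does not give its own proof of this proposition: it is quoted verbatim from \cite[Proposition~3.4]{favre-jonsson:eigenval} and used as a black box, so there is nothing in the present paper to compare your argument against.

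That said, your proof is correct and follows the natural line one would expect. The identification $c(f,\nu)=\min\{\nu(f_1),\nu(f_2)\}$ together with \hyperref[prop:skewness_eval]{Proposition~\ref*{prop:skewness_eval}} immediately gives the piecewise affine, increasing, concave behaviour along root segments, and the factorization $c(f,-)=c(f,-)\circ\rho$ through the retraction onto the finite tree $T_0$ spanned by $\ord_0$ and the $\nu_{\phi_i}$ handles both the finiteness and closedness of $T$. Your connectedness argument via downward-closedness is also sound: the key point, that concavity forces the incoming slope from $\ord_0$ to dominate all outgoing slopes (so that vanishing of the former forces local constancy, while positivity of the former propagates down to $\ord_0$), is exactly right. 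One small remark: at the root $\ord_0$ itself there is no incoming direction, so the characterization ``$\nu\in T$ iff the incoming slope is positive'' does not literally apply there, but your argument still yields $\ord_0\in T$ whenever $T\neq\varnothing$, since strict increase along $[\ord_0,\nu]$ already prevents local constancy at $\ord_0$.
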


For any $\nu\in \mc{V}$ which is not a contracted curve valuation, set $f_\bullet\nu := c(f,\nu)^{-1}f_*\nu\in\mc{V}$.
It can be shown that the map $f_\bullet\colon  \mc{V}\dashrightarrow \mc{V}$, defined away from contracted curve valuations, extends uniquely to an everywhere defined weakly continuous function $f_\bullet \colon \mc{V}\to \mc{V}$, see \cite[Theorem 3.1]{favre-jonsson:eigenval}.
The map $f_\bullet$ sends divisorial, irrational, and infinitely singular valuations to valuations of the same type, and sends non-contracted curve valuations to curve valuations.
The image of a contracted curve valuation is divisorial.
Finally, $f_\bullet$ is functorial in the sense that $(f^n)_\bullet = (f_\bullet)^n$.
We will heavily make use of the following three properties of $f_\bullet$.


\begin{prop}\label{prop:regularity}
Let $f\colon (\C^2,0)\to (\C^2,0)$ be a dominant superattracting fixed point germ.
\begin{enumerate}
\item[$1.$] If $T$ is the tree from \hyperref[prop:loc_const]{Proposition~\ref*{prop:loc_const}}, then $f_\bullet$ is order preserving on the components of $\mc{V}\smallsetminus T$.
\item[$2.$] \label{jacobian_formula}If $\nu\in \mc{V}$ has finite thinness, then so does $f_\bullet\nu$, and in fact one has the \emph{Jacobian formula}
\[c(f,\nu)A(f_\bullet\nu) = A(\nu) + \nu(J_f),\]
where $J_f$ is the Jacobian determinant of $f$.
\item[$3.$] For any $\nu\in \mc{V}$ with $\nu\neq \ord_0$, there exist valuations $\ord_0 = \nu_0<\nu_1<\cdots<\nu_n = \nu$ such that $f_\bullet$ maps each segment $I_j = [\nu_{j-1}, \nu_j]$ monotonically onto its image.
Moreover, the $\nu_j$ can be chosen so that $\nu_j$ is divisorial for $j = 1,\ldots, n-1$ and so that for each $j$ there exist integers $a,b,c,d\in \N$ with $ad - bc\neq 0$ for which
\[
\alpha(f_\bullet \nu) = \frac{a \alpha(\nu) + b}{c\alpha(\nu) + d}
\]
whenever $\nu\in I_j$.
\end{enumerate}
\end{prop}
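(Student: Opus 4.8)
The three assertions are essentially the local dynamical results of Favre--Jonsson \cite{favre-jonsson:eigenval,favre-jonsson:valtree}, and I would establish each in turn. Part~1 is soft: push-forward is always order preserving, since $\nu\le\mu$ forces $\nu(\phi\circ f)\le\mu(\phi\circ f)$ for every $\phi\in\C\llbracket x,y\rrbracket$, i.e.\ $f_*\nu\le f_*\mu$. By the very definition of $T$ the function $c(f,-)$ is locally constant on $\mc V\smallsetminus T$, hence constant on each connected component $U$ (which is open, as $T$ is closed). A contracted curve valuation is an end of $\mc V$ near which $c(f,-)$ is unbounded, so it lies in $T$; therefore on $U$ one genuinely has $f_\bullet=c_U^{-1}f_*$ for the positive constant $c_U=c(f,U)$, and order preservation is inherited from $f_*$.

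For Part~2 I would first treat divisorial $\nu=b_E^{-1}\ord_E$ and then pass to the limit. Choose blowups $\pi\colon X\to\C^2$ and $\varpi\colon Y\to\C^2$ over $0$ such that $E$ is an exceptional prime of $X$, the lift $\tilde f\colon X\to Y$ is a morphism, and $f_\bullet\nu=b_F^{-1}\ord_F$ with $\tilde f$ mapping $E$ onto $F$; writing $e$ for the vanishing order along $E$ of $\tilde f^*$ of a local equation of $F$, one gets $f_*\ord_E=e\,\ord_F$ and hence $c(f,\nu)=e\,b_F/b_E$. The chain rule for relative canonical divisors, together with $f^*K_{\C^2}=K_{\C^2}-\operatorname{div}(J_f)$, gives
\[K_{X/\C^2}=R_{\tilde f}+\tilde f^*K_{Y/\C^2}-\pi^*\operatorname{div}(J_f),\]
with $R_{\tilde f}$ the ramification divisor of $\tilde f$. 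Evaluating $\ord_E$ on both sides, using $\ord_E(K_{X/\C^2})=b_EA(\nu)-1$, $\ord_E(\tilde f^*K_{Y/\C^2})=e(b_FA(f_\bullet\nu)-1)$, $\ord_E(\pi^*\operatorname{div}(J_f))=b_E\,\nu(J_f)$, and $\ord_E(R_{\tilde f})=e-1$, everything cancels to leave $c(f,\nu)A(f_\bullet\nu)=A(\nu)+\nu(J_f)$. The identity $\ord_E(R_{\tilde f})=e-1$ is precisely the tameness of the ramification in characteristic $0$ (cf.\ Remark~\ref{rem:JacCharp}). For a general $\nu$ of finite thinness I would approximate by divisorial valuations $\nu_k\nearrow\nu$ inside $[\ord_0,\nu]$ and take limits, using the weak continuity of $f_\bullet$ and the continuity along segments of $A$, of $c(f,-)$, and of $\eta\mapsto\eta(J_f)$; the point that genuinely needs checking --- and that is the content of the clause ``then so does $f_\bullet\nu$'' --- is that $A(f_\bullet\nu)$ stays finite in the limit.

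Part~3 is local in nature. The crucial input is a \emph{local monomialization} of $f_\bullet$: for a quasimonomial $\mu$ one can find blowups over $0$ on which $\tilde f\colon X\dashrightarrow Y$ is defined at the center $p$ of $\mu$ and carries a neighborhood of the monomial valuations at $p$ to monomial valuations at $\tilde f(p)$ in suitable local coordinates --- this is in essence in \cite{favre-jonsson:eigenval}. Granting it, pick coordinates $(z,w)$ at $p$ and $(z',w')$ at $\tilde f(p)$ adapted to the exceptional primes; on a short cone of monomial valuations $\nu_{r,s}$ (with $\nu_{r,s}(z)=r$, $\nu_{r,s}(w)=s$) the Newton polygons of $z'\circ\tilde f$ and $w'\circ\tilde f$ show, after finitely many subdivisions so as to work on a single cone, that $f_*$ acts by $(r,s)\mapsto(ar+bs,cr+ds)$ with $a,b,c,d\in\N$ and $ad-bc\ne0$ (the latter because $f$ is dominant). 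Since $c(f,-)$ is also affine on such a piece (Proposition~\ref{prop:loc_const}), and since the skewness of a normalized monomial valuation is a M\"obius function of the weight ratio $r/s$ --- and conversely --- composing these M\"obius maps yields the stated formula on the piece, with coefficients that may be taken nonnegative integral after clearing denominators and signs; monotonicity of $f_\bullet$ on each piece holds by construction, and all the breakpoints introduced --- corners of $c(f,-)$, folding loci of $f_\bullet$ (contained in $T$ by Part~1), and Newton-polygon vertices --- are divisorial, so the interior $\nu_j$ can be chosen divisorial.

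I expect the real obstacle to be the finiteness in Part~3: one must rule out infinitely many breakpoints along the whole of $[\ord_0,\nu]$. This follows by combining the finiteness of $T$ and the piecewise affinity of $c(f,-)$ (Proposition~\ref{prop:loc_const}) with the fact that a piecewise-monomial map folds only finitely often, but the case in which $\nu$ is itself a curve or an infinitely singular valuation --- so that $[\ord_0,\nu]$ is not dominated by any single finite blowup --- needs the additional check that the pieces do not accumulate at $\nu$. In Part~2 the analogous subtlety, which also requires care, is the non-jumping of thinness under $f_\bullet$ when one passes from divisorial to infinitely singular valuations of finite thinness.
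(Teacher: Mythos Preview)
Your proposal is correct and aligns with the paper's own treatment, which does not give a self-contained proof but instead defers each part to the literature: Part~1 is declared immediate from Proposition~\ref{prop:loc_const}, Part~2 is attributed to the geometric (log-discrepancy) definition of thinness as in \cite[Lemma~4.7]{jonsson:berkovich} or \cite[Proposition~1.9]{favre:holoselfmapssingratsurf}, and Part~3 is cited as \cite[Theorem~3.1]{favre-jonsson:eigenval}. Your sketches reconstruct precisely these arguments --- the order-preservation of $f_*$ combined with constancy of $c(f,-)$ for Part~1, the relative canonical divisor computation for Part~2, and the local monomialization for Part~3 --- so there is no substantive divergence; you have simply written out what the paper leaves to its references, and the difficulties you flag (non-accumulation of breakpoints near an end valuation, continuity of $A\circ f_\bullet$ at non-divisorial limits) are indeed the points where those references do the real work.
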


Statement (1) in this proposition is immediate from \hyperref[prop:loc_const]{Proposition~\ref*{prop:loc_const}}.
Statement (2), on the other hand, is not immediately obvious from \hyperref[def:skewthin]{Definition~\ref*{def:skewthin}}.
However, from an alternate, more geometric, definition of thinness it is quite easy to derive, see for instance \cite[Lemma 4.7]{jonsson:berkovich} or \cite[Proposition 1.9]{favre:holoselfmapssingratsurf}.
As a consequence of the \hyperref[jacobian_formula]{Jacobian formula}, we see that $f_\bullet$ maps the subtree $\mc{V}_A$ of finite thinness valuations into itself.
Finally, statement (3) is proved in \cite[Theorem 3.1]{favre-jonsson:eigenval}.

\begin{rem}\label{rem:JacCharp}
The \hyperref[jacobian_formula]{Jacobian formula} fails when $\C$ is replaced by a field $k$ of characteristic $p>0$.
As an example, consider the map $f(x,y) = (x^p,y^p)$. One has $f_\bullet\ord_0 = \ord_0$, so the Jacobian formula should yield $2p = 2 + \ord_0(J_f)$.
Since $J_f = 0$, however, $\ord_0(J_f) = +\infty$.
Even if we stipulate that $J_f\neq 0$, the Jacobian formula can fail.
For instance, if $f(x,y) = (x^p(1+x), y^p(1+y))$, then once again $f_\bullet \ord_0 = \ord_0$, and the Jacobian formula should yield $2p = 2 + \ord_0(J_f) = 2 + \ord_0(x^py^p) = 2+2p$.
\end{rem}

Suppose that $f_\bullet\nu = \mu$.
Then $f_\bullet$ induces a map from the tangent space of $\nu$ into the tangent space of $\mu$, which we denote by $df_\bullet$.
If $\vec{v}$ is a tangent vector at $\nu$, then $df_\bullet\vec{v}$ is the tangent vector at $\mu$ defined as follows.
Let $[\nu, \nu']$ be any small interval in the direction $\vec{v}$.
Then $f_\bullet([\nu, \nu'])$ is a subinterval $[\mu,\mu']$ in some tangent direction $\vec{u}$ at $\mu$.
We set $df_\bullet\vec{v} = \vec{u}$.
If $\nu$ is not a contracted curve semivaluation, then the tangent map $df_\bullet$ surjects onto the tangent space of $\mu$.
The most interesting case is when $\nu$ and $\mu$ are divisorial valuations, say with $\nu = b_E^{-1}\ord_E$ for some exceptional prime $E$ of a blowup $\pi\colon X\to \C^2$ and $\mu = b_F^{-1}\ord_F$ for some exceptional prime $F$ of a blowup $\pi'\colon X'\to \C^2$.
In this case, the tangent spaces of $\nu$ and $\mu$ are in bijective correspondence with the points of $E$ and $F$, respectively. 
The map $f\colon X\dashrightarrow X'$ sends $E$ onto $F$, and the tangent map $df_\bullet$ is given by $f\colon E\to F$.

It is easy to see that the sequence $c(f^n, \nu)$ of attraction rates along a fixed valuation $\nu\in \mc{V}_A$ is a \emph{multiplicative dynamical cocycle}, i.e., that 
\[
c(f^n, \nu) = \prod_{i=0}^{n-1} c(f, f_\bullet^i\nu).
\]
It follows that the limit $c_\infty(\nu) := \lim c(f^n, \nu)^{1/n}$ exists.
In \cite[\S8]{jonsson:berkovich} it is shown that the value $c_\infty(\nu)$ is independent of $\nu\in \mc{V}_A$ and is a quadratic integer $c_\infty>1$.
In particular, if $\nu\in \mc{V}_A$ is fixed by $f_\bullet$, then $c(f,\nu) = c_\infty$.
The proof that $c_\infty$ is a quadratic integer makes essential use of the following fixed point theorem for $f_\bullet$.

\begin{thm}[{\cite[Theorem 4.2]{favre-jonsson:eigenval}}]\label{thm:eigenval}
There is a $\nu_\star\in \mc{V}$ such that $f_\bullet\nu_\star = \nu_\star$ and $c(f,\nu_\star) = c_\infty$.
Moreover, the fixed point $\nu_\star$ is either quasimonomial, or an attracting end, that is, there exists an $f_\bullet$-invariant segment $I=[\nu, \nu_\star]$ such that $f_\bullet^n(I) \to \nu_\star$.
\end{thm}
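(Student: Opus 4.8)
The plan is to reconstruct the argument of Favre--Jonsson, extracting the fixed point $\nu_\star$ as a maximal element, with respect to the tree order, of the set of valuations ``moved upward'' by $f_\bullet$. Concretely, I would set $\Sigma := \{\nu\in\mc{V} : \nu\leq f_\bullet\nu\}$. This set is nonempty, since it contains the root $\ord_0$ (which is the minimum of $\mc{V}$), and it is weakly closed, being cut out by the conditions $\nu(\phi)\leq(f_\bullet\nu)(\phi)$, each of which is closed by weak continuity of $f_\bullet$. Moreover, if $\{\mu_i\}$ is a totally ordered subset of $\Sigma$, its least upper bound $\mu_\infty$ exists in $\mc{V}$ by property~(4) of the tree order, the net $\mu_i$ converges weakly to $\mu_\infty$, and $\mu_\infty\leq f_\bullet\mu_\infty$ by weak continuity of $f_\bullet$ and closedness of $\leq$; hence $\mu_\infty\in\Sigma$. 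By Zorn's lemma, $\Sigma$ then has a maximal element $\nu_\star$.

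Next I would show that this $\nu_\star$ is a fixed point. If $\nu_\star$ is an end of $\mc{V}$ this is automatic, since then no element of $\mc{V}$ strictly dominates $\nu_\star$, while $f_\bullet\nu_\star\geq\nu_\star$. If $\nu_\star$ is not an end, suppose for contradiction that $\nu_\star<f_\bullet\nu_\star$; I would then use that, by \hyperref[prop:regularity]{Proposition~\ref*{prop:regularity}(1)}, $f_\bullet$ is order preserving on the component of $\mc{V}\smallsetminus T$ containing a short arc $(\nu_\star,\nu']$ in the direction of $f_\bullet\nu_\star$ (the finitely many positions where $\nu_\star$ lies on $T$ being handled separately, using the explicit piecewise-M\"obius formula for the skewness of $f_\bullet\nu$ from \hyperref[prop:regularity]{Proposition~\ref*{prop:regularity}(3)}). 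Order preservation gives $\nu'\leq f_\bullet\nu_\star\leq f_\bullet\nu'$ for $\nu'$ close enough to $\nu_\star$, so $\nu'\in\Sigma$, contradicting maximality. Hence $f_\bullet\nu_\star=\nu_\star$.

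By the classification of valuations, the fixed point $\nu_\star$ is then quasimonomial or an end. In the latter case I would obtain the attracting segment as follows: choosing a chain in $\Sigma$ with supremum $\nu_\star$ and a point $\nu$ in it, order preservation of $f_\bullet$ toward $\nu_\star$ makes the orbit $(f_\bullet^n\nu)_n$ increasing with supremum $\nu_\star$, so $I:=[\nu,\nu_\star]$ is $f_\bullet$-invariant and $f_\bullet^n(I)=[f_\bullet^n\nu,\nu_\star]$ shrinks to $\{\nu_\star\}$. Finally, to see $c(f,\nu_\star)=c_\infty$, I would use the cocycle identity $c(f^n,\nu)=\prod_{i=0}^{n-1}c(f,f_\bullet^i\nu)$ for $\nu\in\mc{V}_A$: if $\nu_\star\in\mc{V}_A$ this gives $c(f^n,\nu_\star)=c(f,\nu_\star)^n$ and hence $c(f,\nu_\star)=c_\infty$, while if $\nu_\star$ is an end outside $\mc{V}_A$ one takes the base point $\nu$ of $I$ in $\mc{V}_A$ (possible since $f_\bullet$ preserves $\mc{V}_A$ by the \hyperref[jacobian_formula]{Jacobian formula}, so the chain in $\Sigma$ can be kept there) and notes that $c(f^n,\nu)^{1/n}$ is the geometric mean of the sequence $c(f,f_\bullet^i\nu)$, which tends to $c(f,\nu_\star)$ by continuity of $c(f,-)$, forcing $c_\infty=c(f,\nu_\star)$.

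The main obstacle is the step ``$\nu_\star$ maximal $\Rightarrow$ $\nu_\star$ fixed''. Since $\mc{V}$ is weakly compact but not metrizable, the classical dendrite fixed-point theorems do not apply directly, and since $\mc{V}_A$ is not weakly closed one genuinely cannot rule out that $\nu_\star$ is an (infinitely singular, or curve) end rather than quasimonomial. Ruling out that $f_\bullet$ ``folds'' a short arc issuing from $\nu_\star$ back across $\nu_\star$---which is exactly what would break the maximality argument---requires the order-preservation of \hyperref[prop:regularity]{Proposition~\ref*{prop:regularity}(1)} together with a careful analysis at the finitely many vertices of $T$; and the \hyperref[jacobian_formula]{Jacobian formula} $c(f,\nu)A(f_\bullet\nu)=A(\nu)+\nu(J_f)$, which forces thinness to be divided by $c(f,\nu)\geq c_\infty>1$, is what guarantees that an end fixed point is genuinely attracting and not merely a boundary fixed point.
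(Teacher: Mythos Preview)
The paper does not supply its own proof of this statement: it is quoted from Favre--Jonsson \cite[Theorem~4.2]{favre-jonsson:eigenval} and used throughout as a black box. There is therefore nothing in the present paper to compare your proposal against.

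As for the sketch itself, the Zorn argument on $\Sigma=\{\nu:\nu\leq f_\bullet\nu\}$ is a natural idea, but the step you yourself flag as the main obstacle is not actually carried out. To show that a maximal $\nu_\star\in\Sigma$ which is not an end must be fixed, you invoke order-preservation of $f_\bullet$ on a short arc above $\nu_\star$; however Proposition~\ref{prop:regularity}(1) only gives this off the finite tree $T$, and Proposition~\ref{prop:regularity}(3) only says $f_\bullet$ is piecewise \emph{monotone} on segments, possibly order-reversing. Saying the remaining cases are ``handled separately using the explicit piecewise-M\"obius formula'' is a placeholder, not an argument. The same gap reappears in your attracting-end paragraph: you assert the orbit $(f_\bullet^n\nu)_n$ is increasing ``by order preservation toward $\nu_\star$'', but nothing you have proved guarantees that $f_\bullet$ is order-preserving on all of $[\nu,\nu_\star]$, nor that $\Sigma$ is $f_\bullet$-invariant. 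Finally, the appeal to the Jacobian formula to show an end fixed point is genuinely attracting does not work when $A(\nu_\star)=+\infty$, which is exactly the situation for curve ends and for many infinitely singular ends.
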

\begin{defi}
Any valuation $\nu_\star$ satisfying the conditions of \hyperref[thm:eigenval]{Theorem~\ref*{thm:eigenval}} is called an \emph{eigenvaluation} of $f$.
\end{defi}

While this theorem is powerful enough to conclude $c_\infty$ is a quadratic integer, in order to prove that the sequence $c(f^n)$ satisfies a linear integral recursion relation, we will need to study in more detail how attracting the eigenvaluation $\nu_\star$ is.
In the next two sections, we will show that \emph{every} finite thinness valuation $\nu\in \mc{V}_A$ is attracted to some eigenvaluation $\nu_\star$.
The main tool in doing so will be an equicontinuity theorem for $f_\bullet$, which we spend the remainder of the section proving.

Recall that if $a\colon \mc{V}\to [-\infty, \infty]$ is a parameterization, then $\mc{V}_a$ denotes the set of $\nu\in \mc{V}$ with $|a(\nu)|<\infty$, and $d_\alpha$ is the metric on $\mc{V}_a$ induced by $a$.
We will work now with two parameterizations on $\mc{V}$, namely the thinness parameterization $A$ and its reciprocal	$1/A$.
Note that $\mc{V}_{1/A} = \mc{V}$.

\begin{lem}\label{lem:potentialestimate}
Let $\nu, \mu\in \mc{V}_A$.
Then $|\nu(\phi) - \mu(\phi)|\leq \ord_0(\phi)d_A(\nu,\mu)$ for each function $\phi\in \mf{m}$. 
\end{lem}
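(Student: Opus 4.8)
The plan is to reduce, in two steps, to a single skewness computation, after which the estimate falls out of Proposition~\ref{prop:skewness_eval} together with the trivial bound $m\geq 1$ on the multiplicity function.

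\emph{First}, I would reduce to the case that $\phi$ is irreducible. Assuming $\phi\neq 0$ (the case $\phi=0$ being trivial), factor $\phi=u\phi_1\cdots\phi_r$ in the UFD $\C\llbracket x,y\rrbracket$ with $u$ a unit and each $\phi_i\in\mf{m}$ irreducible, $r\geq 1$. Since every element of $\mc{V}$ is multiplicative and vanishes on units, $\nu(\phi)=\sum_i\nu(\phi_i)$, and likewise for $\mu$ and for $\ord_0$; hence $|\nu(\phi)-\mu(\phi)|\leq\sum_i|\nu(\phi_i)-\mu(\phi_i)|$ while $\ord_0(\phi)=\sum_i\ord_0(\phi_i)$, and it suffices to prove the estimate for each $\phi_i$. \emph{Second}, writing $\eta:=\nu\wedge\mu$ and using that $A$ is increasing away from $\ord_0$, one has $d_A(\nu,\mu)=(A(\nu)-A(\eta))+(A(\mu)-A(\eta))$; combined with $|\nu(\phi)-\mu(\phi)|\leq|\nu(\phi)-\eta(\phi)|+|\eta(\phi)-\mu(\phi)|$, it is enough to handle comparable valuations, i.e.\ to show $\sigma(\phi)-\eta(\phi)\leq\ord_0(\phi)(A(\sigma)-A(\eta))$ whenever $\eta\leq\sigma$ both lie in $\mc{V}_A$ and $\phi$ is irreducible.

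For the comparable case I would apply Proposition~\ref{prop:skewness_eval} to write
\[
\sigma(\phi)-\eta(\phi)=\ord_0(\phi)\bigl(\alpha(\sigma\wedge\nu_\phi)-\alpha(\eta\wedge\nu_\phi)\bigr).
\]
The valuations $\eta\wedge\nu_\phi$ and $\sigma\wedge\nu_\phi$ both lie below $\nu_\phi$, hence are comparable, and in fact $\eta\wedge\nu_\phi\leq\sigma\wedge\nu_\phi$, both lying on the segment $[\ord_0,\sigma]$. Because $m\geq 1$ everywhere, the defining integral $A(\tau)=2+\int_{\ord_0}^{\tau}m\,d\alpha$ gives $\alpha(\tau_2)-\alpha(\tau_1)\leq A(\tau_2)-A(\tau_1)$ for comparable $\tau_1\leq\tau_2$; applied to $\tau_1=\eta\wedge\nu_\phi$, $\tau_2=\sigma\wedge\nu_\phi$, this bounds the right-hand side above by $\ord_0(\phi)\bigl(A(\sigma\wedge\nu_\phi)-A(\eta\wedge\nu_\phi)\bigr)$. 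Finally, since $\eta$ and $\sigma\wedge\nu_\phi$ are comparable, either $\eta\leq\sigma\wedge\nu_\phi$, in which case $\eta\wedge\nu_\phi=\eta$ and monotonicity of $A$ with $\sigma\wedge\nu_\phi\leq\sigma$ gives $A(\sigma\wedge\nu_\phi)-A(\eta)\leq A(\sigma)-A(\eta)$; or $\sigma\wedge\nu_\phi\leq\eta$, in which case $\eta\wedge\nu_\phi=\sigma\wedge\nu_\phi$ and the difference is $0$. Either way $A(\sigma\wedge\nu_\phi)-A(\eta\wedge\nu_\phi)\leq A(\sigma)-A(\eta)$, which completes the argument.

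The substantive input is entirely Proposition~\ref{prop:skewness_eval} and the inequality $m\geq 1$; the only thing requiring care is the tree bookkeeping with the infima $\eta\wedge\nu_\phi$ and $\sigma\wedge\nu_\phi$ and the final verification that $A(\sigma\wedge\nu_\phi)-A(\eta\wedge\nu_\phi)\leq A(\sigma)-A(\eta)$. I do not anticipate any real obstacle beyond this routine case analysis.
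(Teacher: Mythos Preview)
Your proof is correct and follows essentially the same route as the paper: reduce to irreducible $\phi$, apply Proposition~\ref{prop:skewness_eval} in the comparable case, and use $m\geq 1$ to pass from $d_\alpha$ to $d_A$. The only cosmetic difference is that the paper handles the incomparable case with the bound $|\nu(\phi)-\mu(\phi)|\leq \max\{\nu(\phi),\mu(\phi)\}-(\nu\wedge\mu)(\phi)$ rather than the triangle inequality, and it packages your tree bookkeeping on $\sigma\wedge\nu_\phi$ versus $\eta\wedge\nu_\phi$ into the single line $\alpha(\mu\wedge\nu_\phi)-\alpha(\nu\wedge\nu_\phi)\leq d_\alpha(\nu,\mu)$.
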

\begin{proof} Note that it suffices to prove the lemma for irreducible $\phi$.
First suppose that $\nu$ and $\mu$ are comparable, with say $\nu<\mu$.
By \hyperref[prop:skewness_eval]{Proposition~\ref*{prop:skewness_eval}}, one has that
\[
\mu(\phi) - \nu(\phi) = \ord_0(\phi)[\alpha(\mu\wedge\nu_\phi) - \alpha(\nu\wedge \nu_\phi)] \leq \ord_0(\phi)d_\alpha(\nu,\mu).
\]
It is immediate from the definition of thinness in \hyperref[def:skewthin]{Definition~\ref*{def:skewthin}} that $d_\alpha(\nu,\mu)\leq d_A(\nu,\mu)$, and thus $\mu(\phi) - \nu(\phi)\leq \ord_0(\phi)d_A(\nu,\mu)$, as desired.
Now assume that $\nu$ and $\mu$ are general.
Then
\begin{align*}
|\nu(\phi) - \mu(\phi)| &\leq \max\{\nu(\phi), \mu(\phi)\} - (\nu\wedge\mu)(\phi)\leq \ord_0(\phi)\max\{d_A(\nu,\nu\wedge\mu), d_A(\mu, \nu\wedge\mu)\}\\
& \leq \ord_0(\phi)d_A(\nu,\mu). \qedhere
\end{align*}
\end{proof}

\begin{cor}\label{cor:thinnessestimate}
Let $\nu,\mu\in \mc{V}_A$.
Then one has the inequality
\[
|c(f,\nu)A(f_\bullet\nu) - c(f,\mu)A(f_\bullet\mu)|\leq d_A(\nu,\mu)[1 + \ord_0(J_f)].
\]
\end{cor}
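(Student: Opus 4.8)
The plan is to derive this directly from the Jacobian formula in Proposition~\ref{prop:regularity}(2) together with the pointwise estimate of Lemma~\ref{lem:potentialestimate}. First, since $\nu,\mu\in\mc{V}_A$ have finite thinness they are not contracted curve valuations, so $c(f,\nu)$ and $c(f,\mu)$ are finite; by the Jacobian formula $f_\bullet\nu$ and $f_\bullet\mu$ again have finite thinness, and
\[
c(f,\nu)A(f_\bullet\nu)=A(\nu)+\nu(J_f),\qquad c(f,\mu)A(f_\bullet\mu)=A(\mu)+\mu(J_f).
\]
Subtracting and applying the triangle inequality, it suffices to bound $|A(\nu)-A(\mu)|$ and $|\nu(J_f)-\mu(J_f)|$ separately.

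For the first term I would use that the thinness parameterization $A$ is \emph{order preserving}: by Definition~\ref{def:skewthin} it is an integral of the nonnegative quantity $m(\mu)$ against the increasing parameterization $\alpha$, so $A(\nu\wedge\mu)\le\min\{A(\nu),A(\mu)\}$, whence
\[
|A(\nu)-A(\mu)|\le\max\{A(\nu),A(\mu)\}-A(\nu\wedge\mu)\le d_A(\nu,\mu)
\]
directly from the definition of $d_A$. For the second term, since $f$ is superattracting its derivative $f'(0)$ is nilpotent, so $J_f(0)=\det f'(0)=0$, i.e. $J_f\in\mf{m}$ (and $J_f\ne0$ since $f$ is dominant, so $\ord_0(J_f)<\infty$); thus Lemma~\ref{lem:potentialestimate} applies with $\phi=J_f$ and gives $|\nu(J_f)-\mu(J_f)|\le\ord_0(J_f)\,d_A(\nu,\mu)$. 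Adding the two bounds yields $d_A(\nu,\mu)\bigl[1+\ord_0(J_f)\bigr]$, as claimed.

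There is no real obstacle here; the only points that need (routine) attention are confirming that $A$ is order preserving rather than merely monotone — this is what makes the inequality $|A(\nu)-A(\mu)|\le d_A(\nu,\mu)$ hold with the correct sign — and checking that $J_f$ lies in $\mf{m}$ so that Lemma~\ref{lem:potentialestimate} is genuinely applicable. Both are immediate from the definitions and the superattracting hypothesis, so once the Jacobian formula is invoked the corollary collapses to a two-line computation.
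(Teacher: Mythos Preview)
Your proof is correct and follows essentially the same route as the paper: apply the Jacobian formula, split via the triangle inequality into $|A(\nu)-A(\mu)|\le d_A(\nu,\mu)$ and $|\nu(J_f)-\mu(J_f)|\le \ord_0(J_f)\,d_A(\nu,\mu)$ from Lemma~\ref{lem:potentialestimate}, and add. The only difference is that you spell out the justifications (that $A$ is order preserving and that $J_f\in\mf{m}$) which the paper leaves implicit.
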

\begin{proof}
Using the \hyperref[jacobian_formula]{Jacobian formula}, the left hand side of this inequality is exactly
\[
|A(\nu) - A(\mu) + \nu(J_f) - \mu(J_f)|\leq d_A(\nu,\mu) + |\nu(J_f) - \mu(J_f)|.
\]
From \hyperref[lem:potentialestimate]{Lemma~\ref*{lem:potentialestimate}} we know that $|\nu(J_f) - \mu(J_f)|\leq \ord_0(J_f)d_A(\nu,\mu)$, from which the corollary easily follows.
\end{proof}

\begin{thm}[Equicontinuity]\label{thm:equicontinuity}
Let $\nu,\mu\in \mc{V}_A$.
Then $d_{1/A}(f_\bullet\nu, f_\bullet\mu) \leq 2^{-1}d_A(\nu,\mu)$. 
\end{thm}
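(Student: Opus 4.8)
The plan is to reduce to the case where $\nu$ and $\mu$ are comparable, then exploit the Jacobian formula (\hyperref[jacobian_formula]{Proposition~\ref*{prop:regularity}(2)}) together with the cocycle-type estimate of \hyperref[cor:thinnessestimate]{Corollary~\ref*{cor:thinnessestimate}}. The key observation is that $d_{1/A}(f_\bullet\nu, f_\bullet\mu)$ measures distance in the $1/A$-parameterization on the image side, so we want to bound $|1/A(f_\bullet\nu) - 1/A(f_\bullet\mu)|$ (along a monotone piece) by controlling $|c(f,\nu)A(f_\bullet\nu) - c(f,\mu)A(f_\bullet\mu)|$ from Corollary~\ref{cor:thinnessestimate}, using that $c(f,\cdot)\geq 1$ everywhere and that $A\geq 2$ on $\mc{V}_A$, so $A(f_\bullet\nu)\geq 2$ gives the factor of $2^{-1}$.

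First I would handle the comparable case, say $\nu < \mu$. Then $f_\bullet$ maps the segment $[\nu,\mu]$ onto the segment $[f_\bullet\nu, f_\bullet\mu]$, which is again totally ordered since $f_\bullet$ preserves the partial order on any segment out of $\ord_0$ (this is implicit in \hyperref[prop:loc_const]{Proposition~\ref*{prop:loc_const}} and \hyperref[prop:regularity]{Proposition~\ref*{prop:regularity}}). Hence $(f_\bullet\nu)\wedge(f_\bullet\mu) = f_\bullet\nu$, so $d_{1/A}(f_\bullet\nu, f_\bullet\mu) = |1/A(f_\bullet\mu) - 1/A(f_\bullet\nu)|$. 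Now
\[
\left|\frac{1}{A(f_\bullet\nu)} - \frac{1}{A(f_\bullet\mu)}\right| = \frac{|A(f_\bullet\mu) - A(f_\bullet\nu)|}{A(f_\bullet\nu)A(f_\bullet\mu)}.
\]
Using the Jacobian formula to write $A(f_\bullet\eta) = (A(\eta) + \eta(J_f))/c(f,\eta)$, and using $c(f,\nu), c(f,\mu)\geq 1$ together with monotonicity of $A$ and of $\eta\mapsto \eta(J_f)$ along $[\nu,\mu]$, one gets $|A(f_\bullet\mu) - A(f_\bullet\nu)| \leq A(f_\bullet\mu)\,[A(\mu) - A(\nu) + \mu(J_f) - \nu(J_f)]$; combined with \hyperref[lem:potentialestimate]{Lemma~\ref*{lem:potentialestimate}} applied to $J_f$ this bounds the numerator of the displayed fraction appropriately, and dividing by $A(f_\bullet\nu)A(f_\bullet\mu)$ and using $A(f_\bullet\nu)\geq 2$ yields the factor $2^{-1}$ against $d_A(\nu,\mu)$. (Here one should be slightly careful: if $f_\bullet$ is not monotone on all of $[\nu,\mu]$ one subdivides $[\nu,\mu]$ at the finitely many points supplied by \hyperref[prop:regularity]{Proposition~\ref*{prop:regularity}(3)} and sums the estimate over the pieces, using that $d_A$ and $d_{1/A}$ are additive along segments on which $f_\bullet$ is monotone, and that the image segments concatenate without backtracking into a single monotone chain.)

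Second, the general case follows from the comparable case by a triangle-inequality argument through the infimum $\nu\wedge\mu$. Writing $\eta = \nu\wedge\mu$, we have $d_A(\nu,\mu) = d_A(\nu,\eta) + d_A(\eta,\mu)$, and since $f_\bullet$ is weakly continuous and order preserving on segments, $[f_\bullet\nu, f_\bullet\mu] \subseteq [f_\bullet\nu, f_\bullet\eta]\cup[f_\bullet\eta, f_\bullet\mu]$, so $d_{1/A}(f_\bullet\nu, f_\bullet\mu)\leq d_{1/A}(f_\bullet\nu, f_\bullet\eta) + d_{1/A}(f_\bullet\eta, f_\bullet\mu)$; applying the comparable case to each term gives $\leq 2^{-1}(d_A(\nu,\eta) + d_A(\eta,\mu)) = 2^{-1}d_A(\nu,\mu)$.

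I expect the main obstacle to be the bookkeeping in the non-monotone situation: one must verify that when $[\nu,\mu]$ is broken into monotone pieces for $f_\bullet$, the images concatenate to a path whose $1/A$-length is the \emph{sum} of the piecewise $1/A$-lengths rather than something larger — i.e. that there is no cancellation working against us and no folding that would make the endpoint-to-endpoint $d_{1/A}$ exceed the summed contributions. Since $d_{1/A}$ is a genuine tree metric, the triangle inequality always gives $d_{1/A}(f_\bullet\nu,f_\bullet\mu)\leq \sum_j d_{1/A}(\text{endpoints of }j\text{th image piece})$, which is exactly the direction we need, so this obstacle is really just a matter of stating the subdivision cleanly; the analytic content is entirely in the Jacobian formula plus Lemma~\ref{lem:potentialestimate}.
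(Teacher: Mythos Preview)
There is a genuine gap: you treat $f_\bullet$ as order-preserving on each monotone piece, but Proposition~\ref{prop:regularity} only guarantees that each piece is mapped \emph{monotonically} --- possibly order-reversing. Your assertion that ``$f_\bullet$ preserves the partial order on any segment out of $\ord_0$'' is false: Proposition~\ref{prop:regularity}(1) gives order-preservation only on the complement of the finite tree $T$ where $c(f,-)$ is not locally constant, and on $T$ itself $f_\bullet$ can reverse order. The paper's proof explicitly splits into two cases, and the order-reversing case (paper's Case~2) requires a different argument.

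Concretely, suppose $\mu<\nu$ and $f_\bullet$ reverses order on $[\mu,\nu]$, so you must bound $A(f_\bullet\mu)-A(f_\bullet\nu)$. Your route via Corollary~\ref{cor:thinnessestimate} controls $|c(f,\nu)A(f_\bullet\nu)-c(f,\mu)A(f_\bullet\mu)|$, but since $c(f,\mu)\leq c(f,\nu)$ and $A(f_\bullet\mu)\geq A(f_\bullet\nu)$, the two differences can work against each other and you cannot extract $A(f_\bullet\mu)-A(f_\bullet\nu)$ from this alone. The paper instead observes from the Jacobian formula that $c(f,\mu)A(f_\bullet\mu)\leq c(f,\nu)A(f_\bullet\nu)$, giving
\[
c(f,\mu)\bigl[A(f_\bullet\mu)-A(f_\bullet\nu)\bigr]\leq \bigl[c(f,\nu)-c(f,\mu)\bigr]A(f_\bullet\nu),
\]
and then bounds $c(f,\nu)-c(f,\mu)$ by applying Lemma~\ref{lem:potentialestimate} to a component $f_i$ of $f$ realizing $c(f,\mu)=\mu(f_i)$, yielding $c(f,\nu)-c(f,\mu)\leq c(f,\mu)\,d_A(\nu,\mu)$. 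This step is absent from your proposal. (Separately, your displayed inequality $|A(f_\bullet\mu)-A(f_\bullet\nu)|\leq A(f_\bullet\mu)\,[A(\mu)-A(\nu)+\mu(J_f)-\nu(J_f)]$ is not right as written --- the extra factor $A(f_\bullet\mu)$ should not be there --- though in the order-preserving case a correct version does lead to the desired bound, essentially as in the paper's Case~1.) Your reduction to monotone pieces via the triangle inequality for $d_{1/A}$ is fine; what is missing is the analysis of the reversing pieces.
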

\begin{proof}
By \hyperref[prop:regularity]{Proposition~\ref*{prop:regularity}}, the interval $[\nu,\mu]$ can be decomposed into finitely many abutting closed subintervals on which $f_\bullet$ is monotonic.
Thus, with no loss of generality we can assume that $\mu<\nu$ and that $f_\bullet$ is order preserving or reversing on $[\mu,\nu]$.
Then, by definition,
\begin{equation}\label{eqn:recip}
d_{1/A}(f_\bullet\nu, f_\bullet\mu) = |A(f_\bullet\nu)^{-1} - A(f_\bullet\mu)^{-1}| = \frac{|A(f_\bullet\nu) - A(f_\bullet\mu)|}{A(f_\bullet\nu)A(f_\bullet\mu)}.
\end{equation}

\textbf{Case 1:}
Assume first that $f_\bullet$ is order preserving on $[\mu,\nu]$.
Since $c(f,\nu)\geq c(f,\mu)$ one has
\[
c(f,\mu)[A(f_\bullet\nu) - A(f_\bullet\mu)] \leq c(f,\nu)A(f_\bullet \nu) - c(f,\mu)A(f_\bullet\mu).
\]
Thus by \hyperref[cor:thinnessestimate]{Corollary~\ref*{cor:thinnessestimate}}, we see
\[
A(f_\bullet\nu) - A(f_\bullet\mu) \leq \frac{d_A(\nu,\mu)}{c(f,\mu)}[1 + \ord_0(J_f)].
\]
Applying this inequality to \hyperref[eqn:recip]{Equation~(\ref*{eqn:recip})} then yields
\[
d_{1/A}(f_\bullet\nu, f_\bullet\mu) \leq \frac{d_A(\nu,\mu)[1 + \ord_0(J_f)]}{c(f,\mu)A(f_\bullet\mu)A(f_\bullet\nu)} = \frac{d_A(\nu,\mu)[1 + \ord_0(J_f)]}{A(f_\bullet\nu)[A(\mu) + \mu(J_f)]}\leq \frac{d_A(\nu,\mu)}{A(f_\bullet\nu)}\leq \frac{d_A(\nu,\mu)}{2}.
\]

\textbf{Case 2:}
We assume now that $f_\bullet$ is order reversing on $[\mu,\nu]$.
By the \hyperref[jacobian_formula]{Jacobian formula},
\begin{align*}
A(f_\bullet\mu) - A(f_\bullet\nu) & = \frac{A(\mu) + \mu(J_f)}{c(f,\mu)} - \frac{A(\nu) + \nu(J_f)}{c(f,\nu)}\\
& = \frac{A(\mu) + \mu(J_f) - A(\nu) - \nu(J_f)}{c(f,\mu)} + (A(\nu)+ \nu(J_f))\left[\frac{1}{c(f,\mu)} - \frac{1}{c(f,\nu)}\right].
\end{align*}
Since $\mu<\nu$, the expression $A(\mu) + \mu(J_f) - A(\nu) - \nu(J_f)$ is non-positive, and thus
\[
A(f_\bullet\mu) - A(f_\bullet\nu) \leq (A(\nu)+ \nu(J_f))\left[\frac{1}{c(f,\mu)} - \frac{1}{c(f,\nu)}\right] =A(f_\bullet \nu)\left[\frac{c(f,\nu) - c(f,\mu)}{c(f,\mu)}\right].
\]
Putting this inequality into \hyperref[eqn:recip]{Equation~(\ref*{eqn:recip})} yields
\[
d_{1/A}(f_\bullet\nu, f_\bullet\mu) \leq \frac{1}{A(f_\bullet\mu)}\left[\frac{c(f,\nu) - c(f,\mu)}{c(f,\mu)}\right].
\]
If $i \in\{1,2\}$ is such that $c(f,\mu) = \mu(f_i)$, then \hyperref[lem:potentialestimate]{Lemma~\ref*{lem:potentialestimate}} gives the estimate
\[
c(f,\nu) - c(f,\mu) \leq \nu(f_i) - \mu(f_i)\leq \ord_0(f_i)d_A(\nu,\mu)\leq c(f,\mu)d_A(\nu,\mu),
\]
and thus we have
\[
d_{1/A}(f_\bullet\nu, f_\bullet\mu)\leq \frac{d_A(\nu,\mu)}{A(f_\bullet \mu)}\leq \frac{d_A(\nu,\mu)}{2}.
\]
This completes the proof.
\end{proof}

\section{The case of a single eigenvaluation}

Fix a dominant superattracting holomorphic fixed point germ $f\colon (\C^2,0)\to (\C^2,0)$.
In this and the next section we study the basins of attraction for eigenvaluations of $f$, expanding on the work of \cite{favre-jonsson:eigenval}.
In that work, Favre-Jonsson show the existence of eigenvaluations and prove that they are always in some sense locally attracting.
We will show that they are globally attracting.
Specifically, we will prove the following theorem.

\begin{thm}\label{thm:basins}
Let $f\colon (\C^2,0)\to (\C^2,0)$ be a dominant superattracting holomorphic germ.
Then we are in one of the following three situations.
\begin{enumerate}
\item[$1.$] There is a unique eigenvaluation $\nu_\star$ of $f$, which is an end of $\mc{V}$.
In this case, for every $\nu\in \mc{V}$ with finite thinness one has $f^n_\bullet\nu\to \nu_\star$ in the weak topology as $n\to \infty$.
\item[$2.$] There is a unique eigenvaluation $\nu_\star$ of $f$, which is quasimonomial, such that for every $\nu\in \mc{V}$ with finite thinness one has $f^n_\bullet\nu\to \nu_\star$ with respect to the thinness metric.
\item[$3.$] There is a non-degenerate segment $I\subset\mc{V}$ of fixed points of $f_\bullet^2$ such that for every $\nu\in \mc{V}$ of finite thinness there is a $\nu_\star \in I$ such that $f^{2n}_\bullet\nu\to \nu_\star$ with respect to the thinness metric.
The valuation $\nu_\star$ is given by $r(\nu)$, where $r\colon\mc{V} \to I$ is the retraction map to $I$.
\end{enumerate}
\end{thm}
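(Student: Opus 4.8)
The plan is to analyze the dynamics of $f_\bullet$ on the finite subtree $T$ of Proposition~\ref{prop:loc_const}, using the equicontinuity Theorem~\ref{thm:equicontinuity} as the main driving force, and to separately handle the behavior off of $T$. First I would observe that since $T$ is a finite subtree and $f_\bullet$ need not preserve it, the right object to study is the "attractor" $T_\infty := \bigcap_{n\geq 0} f_\bullet^n(T')$ for a suitable $f_\bullet$-invariant enlargement $T'$ of $T$ (one can take $T'$ to be the convex hull of $T\cup f_\bullet(T)\cup \cdots$, which is still finite because skewness and thinness increase in a controlled way and the Jacobian formula bounds the relevant quantities; alternatively one enlarges $T$ to include the finitely many contracted curve valuations and the eigenvaluation from Theorem~\ref{thm:eigenval}). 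The key point is that by Proposition~\ref{prop:regularity}(1), once a valuation $\nu$ lies in a component of $\mc{V}\smallsetminus T$, its forward orbit is monotone along that component direction, and by Theorem~\ref{thm:equicontinuity} the $1/A$-distances contract, which will pin down the limit.

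The core of the argument is the following dichotomy for the fixed-point set of $f_\bullet$ (equivalently of $f_\bullet^2$) restricted to $\mc{V}_A$. Using Theorem~\ref{thm:equicontinuity} with the factor $2^{-1}<1$, any two fixed points $\nu,\mu$ of $f_\bullet$ in $\mc{V}_A$ satisfy $d_{1/A}(\nu,\mu) = d_{1/A}(f_\bullet\nu, f_\bullet\mu)\leq 2^{-1}d_A(\nu,\mu)$, which already severely constrains the fixed set; more usefully, I would iterate and combine with the Jacobian formula $c(f,\nu)A(f_\bullet\nu) = A(\nu)+\nu(J_f)$ to show that the set $\mathrm{Fix}(f_\bullet)\cap\mc{V}_A$ is either a single quasimonomial point, a single end (the attracting-end case), or — and this is where $f_\bullet^2$ enters — a nondegenerate segment $I$ on which $f_\bullet$ may act as an order-reversing involution with $I$ pointwise fixed by $f_\bullet^2$. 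These three alternatives are exactly cases (1), (2), (3) of the theorem, so the first task is to prove this trichotomy for the fixed set; I would do this by running the contraction estimate on the segment joining two putative fixed points and using concavity/piecewise-affineness of $c(f,-)$ in skewness (Proposition~\ref{prop:loc_const}) to rule out configurations other than a point or a segment.

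Having identified the fixed set, I would prove global attraction as follows. For an arbitrary $\nu\in\mc{V}_A$, consider the orbit $\nu_n := f_\bullet^n\nu$. Set $\mu$ to be the relevant fixed point or, in case (3), the retraction $r(\nu)\in I$ (which is itself fixed by $f_\bullet^2$, using that $r$ commutes with $f_\bullet^2$ up to the segment because $f_\bullet^2$ fixes $I$ pointwise and is order-preserving near $I$). Then Theorem~\ref{thm:equicontinuity} gives $d_{1/A}(f_\bullet\nu_n, f_\bullet^2\mu_{?})\leq 2^{-1}d_A(\nu_n,\mu)$ — but to close the loop I need to convert back from $d_{1/A}$-contraction to $d_A$-contraction (or to weak convergence). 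This is the main obstacle: $d_{1/A}$ and $d_A$ are genuinely different, and a priori a $d_{1/A}$-Cauchy orbit could run off to infinite thinness. I would resolve it by a separate boundedness argument: the Jacobian formula forces $A(\nu_{n+1}) = (A(\nu_n)+\nu_n(J_f))/c(f,\nu_n)$, and since $c(f,\nu_n)\to c_\infty>1$ while $\nu_n(J_f)$ stays bounded (it is $\alpha(\nu_n\wedge\nu_\phi)$ summed over the branches of $J_f$, controlled once the orbit is confined to the finite subtree region), one gets $\limsup A(\nu_n)<\infty$, hence the orbit stays in a region where $d_{1/A}$ and $d_A$ are bi-Lipschitz equivalent; then the $2^{-1}$-contraction upgrades to genuine $d_A$-convergence to $\mu$ in cases (2),(3), and in case (1) a direct monotonicity-plus-weak-compactness argument (the orbit is eventually monotone along the end $\nu_\star$ by Proposition~\ref{prop:regularity}(1) and Theorem~\ref{thm:eigenval}) gives weak convergence $\nu_n\to\nu_\star$. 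Finally I would note that passing to $f^2$ and $f_\bullet^2$ is harmless since $(f^2)_\bullet = (f_\bullet)^2$ and $c(f^2,\nu) = c(f,\nu)c(f,f_\bullet\nu)$, so $c_\infty(f^2) = c_\infty(f)^2$ and the three cases for $f$ correspond to the three cases for $f^2$ with the segment in case (3) now consisting of honest fixed points.
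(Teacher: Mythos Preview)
Your proposal has two genuine gaps that prevent it from going through.

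First, the equicontinuity estimate cannot do the work you ask of it. The inequality $d_{1/A}(f_\bullet\nu,f_\bullet\mu)\leq 2^{-1}d_A(\nu,\mu)$ mixes two different metrics and cannot be iterated into a contraction in either one. You recognize this and try to repair it by showing $\limsup A(\nu_n)<\infty$ from the Jacobian formula, but your argument is circular: you bound $\nu_n(J_f)$ by assuming the orbit stays in a finite-subtree region, which is precisely what convergence would give you. (And in case~(1) the thinness of the orbit genuinely tends to $+\infty$, so no such bound exists.) The paper uses equicontinuity only to show the basin $B$ is \emph{closed} in the thinness topology; the nontrivial content is that $B$ is \emph{open}, and this requires a separate local analysis near $\nu_\star$: the M\"obius-transformation lemma (a M\"obius map with nonnegative integer coefficients fixing $t_0>0$ has derivative $\leq 1$ there, with equality iff it is the identity) controls the dynamics along each periodic tangent direction, and a directed-basin construction over the tangent space at $\nu_\star$ produces a uniform $d_A$-ball inside $B$. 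None of this machinery appears in your sketch, and the trichotomy for the fixed set cannot be read off from equicontinuity alone --- indeed, case~(3) shows that segments of fixed points are compatible with the estimate.

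Second, and more seriously, you never explain why the segment in case~(3) is fixed by $f_\bullet^2$ rather than only by some higher $f_\bullet^m$. The local analysis in the divisorial case produces, a priori, only a segment on which $f_\bullet^m$ is the identity for some $m\geq 1$. Reducing to $m=2$ is not formal: the paper first proves that every point of such a segment is \emph{totally invariant} for $f_\bullet^m$ (via a projection-formula computation with the divisors $Z_{\nu,\pi}$, which also forces $f$ to be finite), and then invokes the one-dimensional fact that a rational self-map of $\mathbb{P}^1$ of degree $\geq 2$ has at most two totally invariant points, each already totally invariant for the second iterate. This is where characteristic~$0$ is used in an essential way. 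Your sketch skips this entirely, so the statement that ``$f_\bullet$ may act as an order-reversing involution'' on $I$ is unjustified.
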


In this section we will focus on the first two cases of \hyperref[thm:basins]{Theorem~\ref*{thm:basins}}; in \S4 we will study the third case.
\hyperref[thm:basins]{Theorem~\ref*{thm:basins}} is the main ingredient in the proofs of \hyperref[thm:recursionrelation]{Theorem~\ref*{thm:recursionrelation}} and \hyperref[thm:weakstability]{Theorem~\ref*{thm:weakstability}}, so these two sections are the heart of this article.

\subsection{The case of an end eigenvaluation}
Suppose that $\nu_\star$ is an eigenvaluation of $f$ which is an end of $\mc{V}$, that is, $\nu_\star$ is either a curve valuation or an infinitely singular valuation.
Consider the set
\[
B = \{\nu\in \mc{V}_A : f^n_\bullet \nu\to \nu_\star \mbox{ weakly as }n\to \infty\}.
\]
We will show that $B = \mc{V}_A$ using a connectedness argument, that is, by proving that $B$ is a nonempty open and closed set with respect to the thinness topology.

The fact that $B$ is nonempty and open is a consequence of \cite[Proposition 5.2(i)]{favre-jonsson:eigenval}.
In this proposition it is shown that for any sufficiently large $\mu<\nu_\star$, the weak open set $U = \{\nu\in \mc{V} : \nu>\mu\}$ is such that $f_\bullet U\Subset U$ and $\mc{V}_A\cap U\subseteq B$.
It follows easily that
\[
B = \mc{V}_A\cap \bigcup_{n\geq 0} f_\bullet^{-n}(U)
\]
is weakly open.
Since the thinness topology is strictly stronger than the weak topology, $B$ is open in the thinness topology as well.
It remains to show that $B$ is closed in the thinness topology; we do this using the equicontinuity \hyperref[thm:equicontinuity]{theorem~\ref*{thm:equicontinuity}}.

\begin{prop}
The basin $B$ is closed in the thinness topology, and hence $B = \mc{V}_A$.
\end{prop}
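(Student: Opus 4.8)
The plan is to show that $B$ is closed in the thinness topology, which combined with the fact (already established) that $B$ is nonempty and open in the thinness topology and that $\mc{V}_A$ is connected in this topology will force $B = \mc{V}_A$. The key tool is the Equicontinuity \hyperref[thm:equicontinuity]{Theorem~\ref*{thm:equicontinuity}}, which says that $f_\bullet$ contracts distances when measured from $d_A$ to $d_{1/A}$. The idea is that if $\nu$ is a thinness limit of points $\nu_k\in B$, then the orbit of $\nu$ stays uniformly close (in the $1/A$-metric) to the orbits of the $\nu_k$, and since those orbits converge to $\nu_\star$, the orbit of $\nu$ must accumulate near $\nu_\star$ as well; one then uses the attracting open set $U$ around $\nu_\star$ to conclude that $\nu$ is actually in $B$.

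First I would fix $\nu\in \mc{V}_A$ in the thinness closure of $B$ and choose $\nu_k\in B$ with $d_A(\nu_k,\nu)\to 0$. Applying \hyperref[thm:equicontinuity]{Theorem~\ref*{thm:equicontinuity}} repeatedly — noting that $f_\bullet$ maps $\mc{V}_A$ into itself by the Jacobian formula, and that each application only contracts the relevant distance — I would obtain
\[
d_{1/A}(f_\bullet^n\nu, f_\bullet^n\nu_k) \leq 2^{-n}\, d_A(\nu,\nu_k)
\]
for all $n\geq 1$. Wait: one must be slightly careful, since \hyperref[thm:equicontinuity]{Theorem~\ref*{thm:equicontinuity}} bounds $d_{1/A}$ of the images by $d_A$ of the sources, and $d_{1/A}\leq \tfrac12 d_A$ is not automatic. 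The correct iteration uses that $f_\bullet$ also does not expand $d_A$ on $\mc{V}_A$ in a suitable sense — but a cleaner route is to only apply the theorem once to get $d_{1/A}(f_\bullet\nu, f_\bullet\nu_k)\leq \tfrac12 d_A(\nu,\nu_k)$, and then separately control $d_{1/A}(f_\bullet^n\nu, f_\bullet^n\nu_k)$ for $n\geq 1$. Since $f_\bullet$ is weakly continuous and, more to the point, is $1$-Lipschitz for $d_{1/A}$ when restricted appropriately (this follows from the monotonicity decomposition in \hyperref[prop:regularity]{Proposition~\ref*{prop:regularity}} together with Case analysis analogous to \hyperref[thm:equicontinuity]{Theorem~\ref*{thm:equicontinuity}}, or directly from the fact that $f_\bullet$ maps segments onto segments and $1/A$ is pulled back compatibly), one gets $d_{1/A}(f_\bullet^n\nu, f_\bullet^n\nu_k)\leq d_{1/A}(f_\bullet\nu, f_\bullet\nu_k)\leq \tfrac12 d_A(\nu,\nu_k)$ for all $n\geq 1$.

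Next, recall the attracting open set: by \cite[Proposition 5.2(i)]{favre-jonsson:eigenval} there is a sufficiently large $\mu<\nu_\star$ so that $U=\{\eta\in\mc{V}:\eta>\mu\}$ satisfies $f_\bullet U\Subset U$ and $\mc{V}_A\cap U\subseteq B$. Fix $k$ large enough that $\tfrac12 d_A(\nu,\nu_k) < \varepsilon$, where $\varepsilon$ is chosen (using the structure of $U$ near the end $\nu_\star$ — an $f_\bullet$-invariant segment $[\mu,\nu_\star]$ and the fact that $d_{1/A}$-small balls around points of this segment close to $\nu_\star$ land inside $U$) small enough to guarantee the following: whenever a valuation $\eta\in\mc{V}$ satisfies $d_{1/A}(\eta,\eta')<\varepsilon$ for some $\eta'$ lying far enough along $[\mu,\nu_\star]$, then $\eta\in U$. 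Since $\nu_k\in B$ we have $f_\bullet^n\nu_k\to\nu_\star$ weakly, and in fact $f_\bullet^n\nu_k$ eventually enters and stays in $U$, accumulating on $\nu_\star$; picking $n$ with $f_\bullet^n\nu_k$ sufficiently far along toward $\nu_\star$, the estimate $d_{1/A}(f_\bullet^n\nu, f_\bullet^n\nu_k)<\varepsilon$ forces $f_\bullet^n\nu\in U$. Since $\mc{V}_A\cap U\subseteq B$ and $f_\bullet^n\nu\in\mc{V}_A$, we conclude $f_\bullet^n\nu\in B$, hence $\nu\in B$ by $f_\bullet$-invariance of $B$. Thus $B$ is thinness-closed; being also nonempty and thinness-open in the connected space $\mc{V}_A$, it equals $\mc{V}_A$.

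The main obstacle I expect is the geometric bookkeeping in the penultimate step: translating "$f_\bullet^n\nu_k$ accumulates at the end $\nu_\star$" together with "$f_\bullet^n\nu$ is $d_{1/A}$-close to $f_\bullet^n\nu_k$" into the membership "$f_\bullet^n\nu\in U$". The subtlety is that $d_{1/A}$ is a weak-type metric near $\nu_\star$ (where $A=\infty$), so closeness in $d_{1/A}$ to a point of $[\mu,\nu_\star]$ that is itself close to $\nu_\star$ does imply membership in the weakly-open set $U$, but one must verify that the $d_{1/A}$-ball is genuinely contained in $U$ — this uses that $U$ is a weak neighborhood basis element around the branch of $\nu_\star$ and that $d_{1/A}$ generates a topology finer than the weak one. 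All of this is straightforward once set up carefully, but it is where the proof requires attention rather than a one-line appeal.
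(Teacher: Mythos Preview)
Your overall architecture is correct and matches the paper: use the equicontinuity theorem to keep $f_\bullet^n\nu$ close in $d_{1/A}$ to $f_\bullet^n\nu_k$, then use the attracting neighborhood $U$ to trap $f_\bullet^n\nu$. The final geometric step (which you flag as the main obstacle) is exactly what the paper does, and your reasoning there is sound.

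However, there is a genuine gap in how you obtain the uniform bound
\[
d_{1/A}(f_\bullet^n\nu, f_\bullet^n\nu_k) \leq \tfrac{1}{2}\, d_A(\nu,\nu_k) \quad\text{for all }n\geq 1.
\]
You first write down the (incorrect) bound $2^{-n}d_A(\nu,\nu_k)$, realize the iteration is delicate, and then assert that $f_\bullet$ is $1$-Lipschitz for $d_{1/A}$, with only a vague appeal to monotonicity and segment structure. That claim is not established anywhere and is not obvious; the equicontinuity theorem controls $d_{1/A}$ of images only in terms of $d_A$ of sources, and there is no general comparison $d_A\leq C\,d_{1/A}$ (indeed this fails near ends, where $A\to\infty$). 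So your iteration does not go through as written.

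The fix, which is what the paper does, is simpler than what you attempt: the equicontinuity theorem is proved for an \emph{arbitrary} dominant superattracting germ, and the constant $\tfrac{1}{2}$ does not depend on the germ. Since each $f^n$ is itself dominant and superattracting, and $(f^n)_\bullet = (f_\bullet)^n$, you may apply the theorem directly to $f^n$ to obtain
\[
d_{1/A}(f_\bullet^n\nu, f_\bullet^n\nu_k) \leq \tfrac{1}{2}\, d_A(\nu,\nu_k)
\]
in one stroke, for every $n$. This is precisely why the theorem is called an \emph{equicontinuity} theorem: it exhibits the whole family $\{f_\bullet^n\}_{n\geq 1}$ as uniformly $\tfrac{1}{2}$-Lipschitz from $(\mc{V}_A,d_A)$ to $(\mc{V},d_{1/A})$. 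Once you use the theorem this way, your argument becomes essentially identical to the paper's.
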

\begin{proof}
Let $\nu_m$ be a sequence in $B$ which converges in the thinness topology to a valuation $\nu_0\in \mc{V}_A$.
Let $\mu<\nu_\star$ be large enough that $U := \{\nu\in \mc{V} : \nu>\mu\}$ satisfies $f_\bullet U\Subset U$ and $\mc{V}_A\cap U\subseteq B$.
Let $\eps>0$ be small enough that the open $d_{1/A}$-ball $B_{1/A}(f_\bullet\mu, \eps)$ is contained in $U$.
For $m$ chosen large enough that $d_A(\nu_0,\nu_m)<\eps$, the equicontinuity \hyperref[thm:equicontinuity]{theorem~\ref*{thm:equicontinuity}} implies $d_{1/A}(f^n_\bullet\nu_m, f^n_\bullet \nu_0)<\eps/2$ for all $n$.
Thus if $n$ is large enough that $f_\bullet^n\nu_m\in f_\bullet U$, one has $f^n_\bullet\nu_0\in U$.
Since $\mc{V}_A\cap U\subseteq B$, we conclude that $\nu_0\in B$.
\end{proof}

\begin{cor}
If $f$ has an end eigenvaluation $\nu_\star$, then $f$ falls into situation 1 of \hyperref[thm:basins]{Theorem~\ref*{thm:basins}}.
\end{cor}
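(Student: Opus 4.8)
The plan is to notice that situation 1 of Theorem~\ref{thm:basins} consists of exactly two assertions about $\nu_\star$: that it is the unique eigenvaluation of $f$ and an end of $\mc{V}$, and that $f^n_\bullet\nu\to\nu_\star$ weakly for every finite-thinness $\nu$. The second is literally the conclusion $B=\mc{V}_A$ of the preceding proposition, since $\mc{V}_A$ is by definition the set of finite-thinness semivaluations; and $\nu_\star$ is an end by hypothesis. So the only remaining content is uniqueness of the eigenvaluation, which I would obtain by splitting on the (in)finiteness of the thinness of a competing eigenvaluation $\nu'$.

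The easy half: if $\nu'$ is an eigenvaluation with $A(\nu')<\infty$, then $\nu'\in\mc{V}_A=B$, and since $f_\bullet\nu'=\nu'$ the orbit $(f^n_\bullet\nu')_n$ is the constant sequence $\nu'$; but it also converges weakly to $\nu_\star$ because $\nu'\in B$, so $\nu'=\nu_\star$ as the weak topology is Hausdorff. (In particular no eigenvaluation can be quasimonomial, consistent with $\nu_\star$ being an end.)

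The substantive half — and the only step where I expect any real obstacle — is when $\nu'$ is an eigenvaluation with $A(\nu')=+\infty$; such a $\nu'$ is necessarily an end of $\mc{V}$, and the preceding proposition cannot be applied to $\nu'$ directly since $\nu'\notin\mc{V}_A$, so I would pass to divisorial approximants from below. Applying \cite[Proposition 5.2(i)]{favre-jonsson:eigenval} to the eigenvaluation $\nu'$ — the same input used above to see that $B$ is nonempty and weakly open — there is a $\mu<\nu'$ such that every finite-thinness semivaluation in the weak neighborhood $\{\nu\in\mc{V}:\nu>\mu\}$ of $\nu'$ belongs to the weak basin of attraction of $\nu'$. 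Choosing a divisorial semivaluation $\mu'$ on the nondegenerate segment $(\mu,\nu')\subseteq[\ord_0,\nu']$ — divisorial semivaluations are dense in every segment, and such a $\mu'$ automatically has finite thinness — one gets that $f^n_\bullet\mu'$ converges weakly both to $\nu'$ (because $\mu'$ lies in the basin of $\nu'$) and to $\nu_\star$ (because $\mu'\in\mc{V}_A=B$), whence $\nu'=\nu_\star$. The one point I would take care to justify is that \cite[Proposition 5.2(i)]{favre-jonsson:eigenval} applies to an \emph{arbitrary} end eigenvaluation, not only to the one furnished by Theorem~\ref{thm:eigenval}; this holds because that statement, like the attracting-segment dichotomy in Theorem~\ref{thm:eigenval}, is valid for every end eigenvaluation of $f$. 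Combining the two halves yields $\nu'=\nu_\star$ in all cases, so $\nu_\star$ is the unique eigenvaluation and $f$ is in situation 1 of Theorem~\ref{thm:basins}.
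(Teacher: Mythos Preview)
Your proof is correct and follows essentially the same approach as the paper's: split on whether a competing eigenvaluation has finite thinness (quasimonomial case) or not (end case), and in each case exhibit a finite-thinness valuation whose orbit would have to converge to two distinct limits. The only minor difference is that in the end case the paper tacitly applies the full preceding proposition ($B=\mc{V}_A$) to the competing end eigenvaluation $\mu_\star$, whereas you more economically use only the local input \cite[Proposition~5.2(i)]{favre-jonsson:eigenval} to produce a single divisorial approximant; both are valid.
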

\begin{proof}
We only must show $\nu_\star$ is the unique eigenvaluation of $f$.
Note that $f$ cannot have a quasimonomial eigenvaluation, since the orbit of every quasimonomial eigenvaluation is weakly attracted to $\nu_\star$.
On the other hand, $f$ cannot have another end eigenvaluation $\mu_\star$ since then the orbit of any finite thinness valuation would be weakly attracted to both $\nu_\star$ and $\mu_\star$, which is impossible.
\end{proof}

\subsection{The case of an irrational eigenvaluation}
Suppose now that $f$ has an irrational eigenvaluation $\nu_\star$.
We now consider the basin
\[
B = \{\nu\in \mc{V}_A : f^n_\bullet\nu \to \nu_\star \mbox{ in the thinness topology as } n\to \infty\}.
\]
We would like to argue in a similar way to how we argued in the case of an end eigenvaluation that $B = \mc{V}_A$.
As we will see, however, it is not always the case that $B = \mc{V}_A$.
On the other hand, it \emph{is} always the case that $B$ is closed, as we will prove momentarily.
The proof we give for this is valid not just when $\nu_\star$ is irrational, but when it has finite thinness. 

\begin{lem}\label{lem:compare}
Suppose $\nu_\star$ is an eigenvaluation of finite thinness.
Then there exist $C,\delta>0$ depending only on $A(\nu_\star)$ such that for all $\nu\in \mc{V}_A$ with $d_{1/A}(\nu,\nu_\star)<\delta$ one has $d_{A}(\nu, \nu_\star)\leq C d_{1/A}(\nu,\nu_\star)$.
\end{lem}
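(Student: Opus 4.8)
The plan is to exploit the fact that the thinness and reciprocal-thinness metrics are locally comparable away from the ends of the tree, with the comparison constant controlled by the common value of $A$ on the two points. The key observation is that $d_A$ and $d_{1/A}$ are built from parameterizations that differ only by the monotone reparameterization $t\mapsto 1/t$ of the image interval, so on any segment $[\eta,\eta']$ the two metrics are related by a mean-value-type estimate: if $\eta<\eta'$ then
\[
d_{1/A}(\eta,\eta') = \frac{1}{A(\eta)} - \frac{1}{A(\eta')} = \frac{A(\eta') - A(\eta)}{A(\eta)A(\eta')} = \frac{d_A(\eta,\eta')}{A(\eta)A(\eta')},
\]
using $A(\eta)\geq 2$ everywhere. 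Since an arbitrary pair $\nu,\nu_\star$ splits along the infimum $\nu\wedge\nu_\star$ into two comparable segments, it suffices to prove the statement for comparable pairs and then add the two contributions, noting that $A(\nu\wedge\nu_\star)\leq A(\nu_\star)$ and $A(\nu\wedge\nu_\star)\leq A(\nu)$.

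First I would set $C := 2A(\nu_\star)^2$ and choose $\delta := \tfrac{1}{2A(\nu_\star)}$, or some similar explicit constant; the only role of $\delta$ is to force $A(\nu)$ to stay bounded. Indeed, from the displayed identity applied to the segment $[\nu\wedge\nu_\star,\,\nu_\star]$ one gets $\tfrac{1}{A(\nu\wedge\nu_\star)} \geq \tfrac{1}{A(\nu_\star)} - d_{1/A}(\nu,\nu_\star) \geq \tfrac{1}{A(\nu_\star)} - \delta$, which with the choice above gives a lower bound on $1/A(\nu\wedge\nu_\star)$, hence an upper bound $A(\nu\wedge\nu_\star)\leq 2A(\nu_\star)$; the same reasoning applied along $[\nu\wedge\nu_\star,\,\nu]$ then bounds $A(\nu)$ in terms of $A(\nu\wedge\nu_\star)$ and $d_{1/A}$, so $A(\nu)$ is bounded by a constant depending only on $A(\nu_\star)$. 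In particular $\nu\in\mc{V}_A$, and $\nu\wedge\nu_\star$ has finite thinness as well.

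Next I would convert back: on the segment $[\nu\wedge\nu_\star,\,\nu_\star]$ the identity gives $A(\nu_\star) - A(\nu\wedge\nu_\star) = A(\nu\wedge\nu_\star)A(\nu_\star)\,d_{1/A}(\nu\wedge\nu_\star,\nu_\star) \leq 2A(\nu_\star)^2\, d_{1/A}(\nu\wedge\nu_\star,\nu_\star)$, using the bound on $A(\nu\wedge\nu_\star)$; likewise $A(\nu) - A(\nu\wedge\nu_\star)\leq 2A(\nu_\star)^2\, d_{1/A}(\nu\wedge\nu_\star,\nu)$. Adding these two inequalities and recalling $d_A(\nu,\nu_\star) = (A(\nu_\star)-A(\nu\wedge\nu_\star)) + (A(\nu)-A(\nu\wedge\nu_\star))$ while $d_{1/A}(\nu,\nu_\star) = d_{1/A}(\nu\wedge\nu_\star,\nu_\star) + d_{1/A}(\nu\wedge\nu_\star,\nu)$ — both metrics being additive along the geodesic through the infimum — yields exactly $d_A(\nu,\nu_\star)\leq 2A(\nu_\star)^2\, d_{1/A}(\nu,\nu_\star)$, which is the claim with $C = 2A(\nu_\star)^2$.

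I do not expect a serious obstacle here; the one point that needs a little care is verifying that both $d_A$ and $d_{1/A}$ are genuinely additive along the bent geodesic $[\nu,\nu\wedge\nu_\star]\cup[\nu\wedge\nu_\star,\nu_\star]$ — this is immediate from the definition of $d_a$ via the infimum, since $\nu\wedge\nu_\star$ is exactly the infimum appearing in the formula — and that the choice of $\delta$ really does bound $A(\nu)$ and not merely $A(\nu\wedge\nu_\star)$. The latter requires that $d_{1/A}(\nu,\nu\wedge\nu_\star)\leq d_{1/A}(\nu,\nu_\star)<\delta$, which holds because $\nu\wedge\nu_\star$ lies on the geodesic from $\nu$ to $\nu_\star$. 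Everything else is the elementary reparameterization computation above.
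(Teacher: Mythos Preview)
Your proposal is correct and follows essentially the same strategy as the paper: split at the infimum $\mu=\nu\wedge\nu_\star$, use the identity $d_{1/A}(\eta,\eta')=d_A(\eta,\eta')/(A(\eta)A(\eta'))$ on each comparable piece, and control $A(\nu)$ via the $\delta$-proximity in $d_{1/A}$. Your explicit use of the additivity of both metrics along the geodesic even yields the slightly sharper constant $C=2A(\nu_\star)^2$ in place of the paper's $3A(\nu_\star)^2$; note also that the bound $A(\nu\wedge\nu_\star)\le A(\nu_\star)$ is immediate from $\nu\wedge\nu_\star\le\nu_\star$, so your first step is actually simpler than you wrote it.
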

\begin{proof}
We take $C = 3A(\nu_\star)^2$, and choose $\delta>0$ to be any value small enough that
\[
A(\nu_\star)^2<\frac{A(\nu_\star)^2}{1 - \delta A(\nu_\star)}<2A(\nu_\star)^2.
\]
Suppose that $\nu \in \mc{V}_A$ is such that $d_{1/A}(\nu,\nu_\star)<\delta$.
Let $\mu = \nu \wedge \nu_\star$.
Then \begin{equation}\label{eqn:lower}
d_{1/A}(\mu,\nu_\star) = \frac{A(\nu_\star) - A(\mu)}{A(\mu)A(\nu_\star)}\geq \frac{A(\nu_\star) - A(\mu)}{A(\nu_\star)^2}.
\end{equation}
Similarly, one has
\[
d_{1/A}(\mu,\nu) = \frac{A(\nu) - A(\mu)}{A(\mu)A(\nu)} = \frac{A(\mu)}{A(\nu)}\cdot\frac{A(\nu) - A(\mu)}{A(\mu)^2} = [1-d_{1/A}(\mu, \nu)A(\mu)]\frac{A(\nu) - A(\mu)}{A(\mu)^2}.
\]
Rearranging this expression gives that
\begin{equation}\label{eqn:upper}
A(\nu) - A(\mu) = \frac{A(\mu)^2 d_{1/A}(\mu,\nu)}{1 - d_{1/A}(\mu,\nu)A(\mu)} \leq \frac{A(\nu_\star)^2 d_{1/A}(\mu,\nu)}{1 - d_{1/A}(\nu_\star,\nu)A(\nu_\star)}\leq 2A(\nu_\star)^2d_{1/A}(\mu,\nu),
\end{equation}
where the last inequality follows from our choice of $\delta$.
Combining \hyperref[eqn:lower]{Equation~(\ref*{eqn:lower})} and \hyperref[eqn:upper]{Equation~(\ref*{eqn:upper})} then yields the desired inequality $d_A(\nu,\nu_\star) \leq 3 A(\nu_\star)^2d_{1/A}(\nu,\nu_\star)$.
\end{proof}

\begin{prop}\label{prop:basin_closed}
Suppose $\nu_\star$ is an eigenvaluation of finite thinness, and let $B$ be the set of $\nu\in \mc{V}_A$ whose orbit converges to $\nu_\star$ in the thinness topology.
Then $B$ is closed in the thinness topology.
\end{prop}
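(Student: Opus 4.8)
The plan is to mimic the closedness half of the end‑eigenvaluation argument, but to work throughout in the thinness metric instead of the weak topology, exploiting the comparability of $d_A$ and $d_{1/A}$ near $\nu_\star$ that comes from $\nu_\star$ having finite thinness. So I would take a sequence $\nu_m$ in $B$ converging in $d_A$ to some $\nu_0\in\mc{V}_A$ and aim to show $\nu_0\in B$. Two soft reductions come first. On $\mc{V}_A$ one has $d_{1/A}\le d_A$ (immediate since $A\ge 2$), so $d_A$‑convergence implies $d_{1/A}$‑convergence; and \hyperref[lem:compare]{Lemma~\ref*{lem:compare}} says that, conversely, a sequence of valuations in $\mc{V}_A$ that $d_{1/A}$‑converges to $\nu_\star$ must $d_A$‑converge to $\nu_\star$. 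Since $f_\bullet$ preserves $\mc{V}_A$ (by the Jacobian formula), it follows that $B$ is equally the set of $\nu\in\mc{V}_A$ with $f_\bullet^n\nu\to\nu_\star$ in $d_{1/A}$, and it suffices to prove $f_\bullet^n\nu_0\to\nu_\star$ in $d_{1/A}$.

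The one genuine input is a form of the equicontinuity \hyperref[thm:equicontinuity]{Theorem~\ref*{thm:equicontinuity}} that is uniform over all iterates: for every $n\ge 1$ and all $\nu,\mu\in\mc{V}_A$,
\[ d_{1/A}(f_\bullet^n\nu,\,f_\bullet^n\mu)\;\le\;\tfrac12\,d_A(\nu,\mu). \]
I would obtain this not by iterating \hyperref[thm:equicontinuity]{Theorem~\ref*{thm:equicontinuity}} for $f$ — one step of that theorem lands in $d_{1/A}$, which is not an available starting metric for the next step — but by applying \hyperref[thm:equicontinuity]{Theorem~\ref*{thm:equicontinuity}} directly to the germ $f^n$. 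This is legitimate: $f^n$ is again a dominant superattracting fixed point germ, since $J_{f^n}=\prod_{i=0}^{n-1}(J_f\circ f^i)\neq 0$ and $(f^n)'(0)=f'(0)^n$ is nilpotent, and by functoriality $(f^n)_\bullet=f_\bullet^n$; so the theorem for $f^n$ is exactly the displayed inequality. Recognising that the uniform‑in‑$n$ estimate is just the equicontinuity theorem applied to iterates is the step I expect to be the crux; the rest is formal.

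Granting the uniform estimate, I would finish as follows. Fix $\varepsilon>0$ and pick $m$ with $d_A(\nu_0,\nu_m)<\varepsilon$; then $d_{1/A}(f_\bullet^n\nu_0,f_\bullet^n\nu_m)<\varepsilon/2$ for every $n\ge 0$. Because $\nu_m\in B$, there is an $N$ with $d_{1/A}(f_\bullet^n\nu_m,\nu_\star)<\varepsilon/2$ for all $n\ge N$, and the triangle inequality then gives $d_{1/A}(f_\bullet^n\nu_0,\nu_\star)<\varepsilon$ for all $n\ge N$. Since $\varepsilon$ was arbitrary, $f_\bullet^n\nu_0\to\nu_\star$ in $d_{1/A}$, hence in $d_A$ by \hyperref[lem:compare]{Lemma~\ref*{lem:compare}}, so $\nu_0\in B$ and $B$ is closed. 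I would note that this is in fact slightly simpler than the end‑eigenvaluation case: no trapping neighbourhood of $\nu_\star$ is needed, because the hypothesis $\nu_m\in B$ already provides, for each $m$, a genuinely convergent orbit, and equicontinuity merely transports that convergence to $\nu_0$.
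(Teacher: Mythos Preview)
Your proof is correct and follows essentially the same route as the paper's: both arguments take a $d_A$-convergent sequence $\nu_m\to\nu_0$ in $B$, apply the equicontinuity theorem to the iterate $f^n$ to control $d_{1/A}(f_\bullet^n\nu_0,f_\bullet^n\nu_m)$ uniformly in $n$, combine this via the triangle inequality with the $d_{1/A}$-convergence of the orbit of $\nu_m$ to $\nu_\star$, and then invoke \hyperref[lem:compare]{Lemma~\ref*{lem:compare}} to upgrade back to $d_A$. Your write-up is in fact a bit more explicit than the paper's about the key point---that the uniform-in-$n$ estimate is obtained by applying \hyperref[thm:equicontinuity]{Theorem~\ref*{thm:equicontinuity}} to $f^n$ rather than by iterating the single-step bound---which is helpful, since the paper uses this without comment.
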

\begin{proof}
Let $C$ and $\delta$ be as given in \hyperref[lem:compare]{Lemma~\ref*{lem:compare}}.
Suppose that $\nu_m$ is a sequence in $B$ converging in the thinness metric to some $\nu_0\in \mc{V}_A$.
Let $0<\eps<\delta$ be given, and choose an $m$ large enough so that $d_A(\nu_m, \nu_0)<\eps$.
One has $d_A(f^n_\bullet\nu_m, \nu_\star)<\eps$ for all sufficiently large $n$, since $\nu_m\in B$.
By the equicontinuity \hyperref[thm:equicontinuity]{theorem~\ref*{thm:equicontinuity}}, $d_{1/A}(f^n_\bullet \nu_m, \nu_\star)<\eps/2$ for large $n$.
But then
\[
d_{1/A}(f^n_\bullet\nu_0,\nu_\star)\leq d_{1/A}(f^n_\bullet\nu_0, f^n_\bullet\nu_m) + d_{1/A}(f^n_\bullet \nu_m, \nu_\star)\leq \frac{1}{2}d_A(\nu_0,\nu_m) + \frac{\eps}{2} < \eps<\delta.
\]
Thus $d_A(f^n_\bullet\nu_0, \nu_\star)<C\eps$ for large $n$.
\end{proof}

We now return to the assumption that $\nu_\star$ is an irrational eigenvaluation for $f$.
We have shown that the basin $B$ is closed in the thinness topology, and we wish to understand when it is open.
The next proposition describes when this happens.

\begin{prop}
There exist divisorial valuations $\nu_1<\nu_\star<\nu_2$ which can be taken as close to $\nu_\star$ as desired such that the interval $I = [\nu_1,\nu_2]$ is $f_\bullet$-invariant.
Moreover, either
\begin{enumerate}
\item[$1.$] $f^n_\bullet \nu\to \nu_\star$ in the thinness topology for all $\nu\in I$, or
\item[$2.$] $f^2_\bullet$ fixes every point of $I$.
\end{enumerate}
In the first case, $f$ falls into situation 2 of \hyperref[thm:basins]{Theorem~\ref*{thm:basins}}.
\end{prop}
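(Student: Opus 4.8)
The plan is to study the germ of the dynamics of $f_\bullet$ at $\nu_\star$ in the skewness parameterization, read off the dichotomy there, and then globalize with the equicontinuity theorem exactly as was done for an end eigenvaluation. Since $\nu_\star$ is irrational its tangent space consists of just two elements, $\vec v_-$ (the one containing $\ord_0$) and $\vec v_+$; and since $\nu_\star$ is quasimonomial it is not a contracted curve valuation, so $df_\bullet$ is surjective, hence a bijection of $\{\vec v_-,\vec v_+\}$, that is, either the identity or the transposition. Set $h:=f$ and $g:=f_\bullet$ in the first case and $h:=f^2$, $g:=f^2_\bullet$ in the second, so that $g$ fixes both tangent directions at $\nu_\star$ and is therefore order preserving on each of the two components of a small punctured neighborhood of $\nu_\star$. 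Applying Proposition~\ref{prop:regularity}(3) to valuations immediately above and below $\nu_\star$, I obtain divisorial $\nu_1<\nu_\star<\nu_2$, as close to $\nu_\star$ as desired, on each half $[\nu_1,\nu_\star]$ and $[\nu_\star,\nu_2]$ of which $g$ is monotone and the skewness is transformed by an integral Möbius map; being order preserving these maps fix the quadratic irrational $t_\star:=\alpha(\nu_\star)$, and I denote their multipliers at $t_\star$ by $\lambda_-,\lambda_+$. Shrinking $[\nu_1,\nu_2]$, I may also assume that $A$ and $\nu\mapsto\nu(J_h)$ are affine in $\alpha$ on $[\nu_1,\nu_2]$ and that $\nu\mapsto c(h,\nu)$ is affine in $\alpha$ on each half (here using that every end lying above $\nu_\star$ points in direction $\vec v_+$, so that $\nu\mapsto\nu(\psi)$ has equal one-sided slopes at $\nu_\star$ for every irreducible $\psi$). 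Differentiating the Jacobian formula $c(h,\nu)A(g\nu)=A(\nu)+\nu(J_h)$ at $\nu_\star$ and simplifying with $g\nu_\star=\nu_\star$ then gives $\lambda_\pm=\bigl(m(\nu_\star)+j-A(\nu_\star)s_\pm\bigr)\big/\bigl(c(h,\nu_\star)m(\nu_\star)\bigr)$, where $j$ is the common one-sided slope of $\nu\mapsto\nu(J_h)$ and $s_\pm\ge0$ are the one-sided slopes of $c(h,-)$; since $c(h,-)$ is concave along $[\ord_0,\nu_2]$ (Proposition~\ref{prop:loc_const}) we have $s_-\ge s_+$, whence $\lambda_-\le\lambda_+$.

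The crucial claim — and the step I expect to be the main obstacle — is that in fact $\lambda_+\le1$ and that $\lambda_+=1$ forces $\lambda_-=1$; this is precisely where the hypothesis that $\nu_\star$ is an \emph{eigenvaluation}, and not merely a fixed point of $f_\bullet$, must enter. I would deduce it from \cite[Proposition 5.2(ii)]{favre-jonsson:eigenval}, which supplies an $f_\bullet$-invariant weak neighborhood $U$ of $\nu_\star$ with $f_\bullet U\Subset U$ (and, in its attracting alternative, $U\cap\mc{V}_A$ inside the basin of $\nu_\star$): a skewness-expanding side at $\nu_\star$, or an arc of fixed points of $g$ lying on only one side of $\nu_\star$ — which is what $\lambda_-<\lambda_+=1$ would produce, by the overlap argument below — is incompatible with the existence of such a $U$. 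Granting this, there are two cases. If $\lambda_\pm<1$, then $g$ contracts each half of $I:=[\nu_1,\nu_2]$ toward $\nu_\star$; after further shrinking so that the Galois conjugate of $t_\star$ falls outside $[\alpha(\nu_1),\alpha(\nu_2)]$, we get $g(I)\subseteq I$ and $g^n\to\nu_\star$ on $I$ in the skewness metric, hence in the thinness metric because $A$ is affine in $\alpha$ on $I$. If $\lambda_\pm=1$, then the two Möbius maps, being integral, having a fixed point of multiplier one, and having two distinct fixed points, are the identity, so $g$ preserves skewness on $I$; since $[\nu_\star,\nu_2]$ and $g([\nu_\star,\nu_2])$ issue from $\nu_\star$ in the same direction $\vec v_+$ they overlap in an arc $[\nu_\star,p]$ with $p>\nu_\star$, on which $g=\mathrm{id}$ because skewness is injective along an arc, and similarly below $\nu_\star$, so after shrinking $I$ inside this fixed arc we have $g\equiv\mathrm{id}$ on $I$. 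To package this as stated: when $g=f_\bullet$, $I$ is $f_\bullet$-invariant and we land in case (1) in the first subcase and in case (2) in the second; when $g=f^2_\bullet$, replace $\nu_1$ by $f_\bullet\nu_2$, so that $f_\bullet$ interchanges the two halves of $I$ (giving $f_\bullet I=I$), and then $f^{2n+1}_\bullet=f_\bullet\circ f^{2n}_\bullet\to\nu_\star$ by continuity in the first subcase, while in the second $f_\bullet|_I$ is an involution and $f^2_\bullet\equiv\mathrm{id}$ on $I$ — again cases (1) and (2) respectively.

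It remains to prove that in case (1) we are in situation 2 of Theorem~\ref{thm:basins}, which I would do by the same connectedness argument used for an end eigenvaluation. Let $B:=\{\nu\in\mc{V}_A : f^n_\bullet\nu\to\nu_\star\text{ in the thinness topology}\}$. It is thinness-closed by Proposition~\ref{prop:basin_closed}; it contains $\nu_\star$, so it is nonempty; and it is thinness-open because $B=\mc{V}_A\cap\bigcup_{n\ge0}f_\bullet^{-n}(U)$ for the weak neighborhood $U$ of $\nu_\star$ from \cite[Proposition 5.2(ii)]{favre-jonsson:eigenval} (for which $f_\bullet U\Subset U$ and $\mc{V}_A\cap U\subseteq B$). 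Since $\mc{V}_A$ is connected in the thinness topology, $B=\mc{V}_A$, so $f^n_\bullet\nu\to\nu_\star$ in the thinness topology for every $\nu$ of finite thinness; as $\nu_\star$ is irrational — in particular, not an end — and the orbit of any eigenvaluation is attracted to $\nu_\star$ (so $\nu_\star$ is the unique eigenvaluation), this is exactly situation 2 of Theorem~\ref{thm:basins}.
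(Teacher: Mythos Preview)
Your first two paragraphs attempt an independent derivation of the dichotomy on $I$, but the paper simply cites \cite[Proposition~5.2(iii)]{favre-jonsson:eigenval} for all of this content (existence of the invariant interval $I$ with divisorial endpoints, the two alternatives, and the $f_\bullet$-invariance of $U = U(\vec v_1)\cap U(\vec v_2)$). Your multiplier analysis via the Jacobian formula is a nice idea, but since you still invoke Favre--Jonsson for the key inequality $\lambda_+\le 1$ and for ruling out the one-sided fixed arc, you are not actually gaining independence from that reference --- you are re-deriving part of it while citing it for the rest. In fact Lemma~\ref{lem:mobius} (applied on each half separately, since each M\"obius map is increasing with non-negative integer coefficients and fixes the irrational $t_\star$) already gives $\lambda_\pm\le 1$ with equality forcing the identity, which would let you avoid citing Favre--Jonsson for the multiplier bound; but ruling out $\lambda_-<1=\lambda_+$ still requires more work than your sketch provides.

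The more serious gap is in your final paragraph. You assert that Favre--Jonsson's proposition supplies a weak neighborhood $U$ with $\mc{V}_A\cap U\subseteq B$, where $B$ is the \emph{thinness} basin. But that is precisely what the paper \emph{proves} here rather than cites: Favre--Jonsson gives the dichotomy on the one-dimensional interval $I$ and the invariance of $U$, not thinness convergence on all the branches of $U$ hanging off $I$. The paper bridges this gap with a retraction argument you have omitted entirely. Namely, shrink $I$ so that the finite tree $T$ where either $c(f,-)$ or $\nu\mapsto\nu(J_f)$ fails to be locally constant meets $U$ only inside $(\nu_1,\nu_2)$; then $f_\bullet$ is order preserving on $U\smallsetminus I$ (Proposition~\ref{prop:regularity}(1)), so $f_\bullet\circ r = r\circ f_\bullet$ and $c(f,\nu)=c(f,r\nu)$ on $U$, where $r\colon U\to I$ is the retraction. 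The Jacobian formula then yields
\[
d_A(f^n_\bullet\nu,\, rf^n_\bullet\nu) \;=\; d_A(f^n_\bullet\nu,\, f^n_\bullet r\nu) \;=\; c(f^n,r\nu)^{-1}\,d_A(\nu,r\nu)\;\longrightarrow\;0,
\]
which, combined with $d_A(f^n_\bullet r\nu,\nu_\star)\to 0$ from case~(1) on $I$, gives $\mc{V}_A\cap U\subseteq B$. Without this step your openness claim for $B$ is unsupported: mimicking the end-eigenvaluation argument verbatim does not work, because there $B$ was the \emph{weak} basin and $\nu_\star$ had a single tangent direction, whereas here you must control the thinness distance across all the lateral branches of $U$.
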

\begin{proof}
Except for the last statement, this is exactly \cite[Proposition 5.2(iii)]{favre-jonsson:eigenval}.
Thus we only must show that in case 1, $f$ falls into situation 2 of \hyperref[thm:basins]{Theorem~\ref*{thm:basins}}.
Assume, then, that we are in case 1.
Let $\vec{v}_i$ be the tangent vector at $\nu_i$ in the direction of $\nu_\star$ for $i = 1,2$, and set $U = U(\vec{v}_1)\cap U(\vec{v}_2)$.
Then $U$ is a weak open set, and by \cite[Proposition 5.2(iii)]{favre-jonsson:eigenval} it is $f_\bullet$-invariant.

Let $T$ be the set of $\nu\in \mc{V}_A$ such that at least one of the functions $\nu\mapsto c(f,\nu)$ and $\nu\mapsto \nu(J_f)$ is not locally constant at $\nu$.
By \hyperref[prop:loc_const]{Propositions~\ref*{prop:loc_const}} and \ref{prop:skewness_eval}, $T$ is a finite subtree of $\mc{V}_A$.
Therefore, by choosing the $\nu_i$ close enough to $\nu_\star$, we can arrange that $U\cap T\subseteq (\nu_1,\nu_2)$.
Let $r\colon U\to I$ be the retraction map $r\nu :=\nu\wedge\nu_2$.
By \hyperref[prop:regularity]{Proposition~\ref*{prop:regularity}(1)}, $f_\bullet$ is order preserving on $U\smallsetminus I$, which says precisely that $f_\bullet \circ  r= r\circ f_\bullet$ on $U$.
Similarly,  the fact that $c(f,-)$ is constant on the components of $U\smallsetminus I$ says precisely that $c(f,\nu) = c(f,r\nu)$ for all $\nu\in U$.
For any $\nu\in U$, one then has
\[
d_A(f^n_\bullet\nu, \nu_\star) \leq d_A(f^n_\bullet\nu, rf^n_\bullet\nu) + d_A(rf^n_\bullet\nu, \nu_\star) = d_A(f^n_\bullet\nu, f^n_\bullet r\nu) + d_A(f^n_\bullet r\nu, \nu_\star).
\]
The second term on the right hand side of this expression tends to $0$ as $n\to \infty$ by hypothesis: we are assuming to be in case 1.
By the \hyperref[jacobian_formula]{Jacobian formula}, the first term can be computed to be
\[
d_A(f^n_\bullet\nu, f^n_\bullet r\nu) = c(f^n, r\nu)^{-1}d_A(\nu, r\nu)\to 0\,\,\,\,\,\mbox{ as }n\to \infty.
\]
Therefore $d_A(f^n_\bullet\nu, \nu_\star)\to 0$, that is, $U\subseteq B$.
We conclude that $B$ is open, and thus by connectedness $B = \mc{V}_A$.
Note that $f$ cannot have another quasimonomial eigenvaluation, since all quasimonomial valuations are attracted to $\nu_\star$.
Also, $f$ cannot have an end eigenvaluation, since end eigenvaluations attract every finite thinness valuation.
This proves $\nu_\star$ is unique, and hence we are in situation 2 of \hyperref[thm:basins]{Theorem~\ref*{thm:basins}}.
\end{proof}

\subsection{The case of a divisorial eigenvaluation}
Suppose now that $\nu_\star$ is a divisorial eigenvaluation of $f$.
The analysis in this case is more delicate, owing largely to the fact that the tangent space at $\nu_\star$ is more complicated.
Again, let
\[
B = \{\nu\in \mc{V}_A : f^n_\bullet\nu\to \nu_\star \mbox{ in the thinness topology as }n\to \infty\}.
\]
We have seen that $B$ is closed in the thinness topology, and we wish to determine when it is open.
The next proposition describes when this happens.

\begin{prop}\label{prop:divisorial}
If $\nu_\star$ is a divisorial eigenvaluation for $f$, then either
\begin{enumerate}
\item[$1.$] $f^n_\bullet \nu\to \nu_\star$ in the thinness topology for all $\nu\in \mc{V}_A$, or
\item[$2.$] there is a segment $I = [\nu_\star, \nu]$ and an $m\geq 1$ such that $f^m_\bullet$ acts as the identity on $I$.
\end{enumerate}
In the first case, $f$ falls into situation 2 of \hyperref[thm:basins]{Theorem~\ref*{thm:basins}}.
\end{prop}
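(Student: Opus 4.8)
The plan is to follow the scheme of the irrational case in \S3.2: since \hyperref[prop:basin_closed]{Proposition~\ref*{prop:basin_closed}} already gives that $B$ is closed in the thinness topology, it suffices to prove that $B$ is \emph{open} in that topology, for then $B=\mc{V}_A$ by connectedness of $\mc{V}_A$ — unless along the way we detect alternative (2). As the heading of \S3.3 warns, the essential new feature is that the tangent space at the divisorial valuation $\nu_\star = b_E^{-1}\ord_E$ is the whole exceptional prime $E$, a projective line, so that the relevant directions cannot be captured by a single weak-open set of the form $U(\vec v_1)\cap U(\vec v_2)$ as in the irrational case.

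The first step is a local computation at $\nu_\star$. Let $T\subseteq\mc{V}_A$ be the finite subtree outside of which both $c(f,-)$ and $\nu\mapsto\nu(J_f)$ are locally constant (\hyperref[prop:loc_const]{Propositions~\ref*{prop:loc_const}} and~\ref{prop:skewness_eval}). Only finitely many tangent directions at $\nu_\star$ meet $T\smallsetminus\{\nu_\star\}$; call all the others \emph{free}. If $\vec v$ is free, then $U(\vec v)$ is a subtree disjoint from $T$, every $\nu\in U(\vec v)$ satisfies $\nu\geq\nu_\star$, and $c(f,-)\equiv c_\infty$ and $\nu\mapsto\nu(J_f)\equiv\nu_\star(J_f)$ hold on $U(\vec v)$. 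Feeding this into the \hyperref[jacobian_formula]{Jacobian formula}, and using the same formula at $\nu_\star$ (which gives $\nu_\star(J_f)=(c_\infty-1)A(\nu_\star)$), one obtains for every $\nu\in U(\vec v)\cap\mc{V}_A$
\[
A(f_\bullet\nu)-A(\nu_\star)=c_\infty^{-1}\bigl(A(\nu)-A(\nu_\star)\bigr);
\]
since $f_\bullet$ is order preserving on $U(\vec v)$ by \hyperref[prop:regularity]{Proposition~\ref*{prop:regularity}(1)}, this reads $d_A(f_\bullet\nu,\nu_\star)=c_\infty^{-1}d_A(\nu,\nu_\star)$. Thus any orbit that reaches and remains in free directions has its thinness distance to $\nu_\star$ decay geometrically.

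The second step handles the tangent dynamics. The tangent map $df_\bullet$ at $\nu_\star$ is a rational self-map $g$ of $E$; it has only finitely many non-free directions, and every $g$-orbit on $E$ is eventually absorbed either into a direction all of whose forward images are free, or into a periodic cycle of non-free directions. Passing to an iterate $f^m$ (which affects neither $B$ nor the statement) we may assume each such periodic non-free direction is fixed. For a fixed non-free direction $\vec v$, on a small divisorial segment in direction $\vec v$ \hyperref[prop:regularity]{Proposition~\ref*{prop:regularity}(3)} makes $f_\bullet$ monotone with skewness transformed by a M\"obius map fixing $\alpha(\nu_\star)$; either this M\"obius map is the identity — then $f_\bullet$ is the identity on a segment $I=[\nu_\star,\nu]$ and alternative (2) of the proposition holds — or it is not, in which case the eigenvaluation property (via the analysis of \cite{favre-jonsson:eigenval}, which produces an $f_\bullet$-invariant segment abutting $\nu_\star$ along which $f^n_\bullet\to\nu_\star$) forces $\nu_\star$ to be attracting in that direction, first in skewness and then, through the \hyperref[jacobian_formula]{Jacobian formula} as in \S3.2, in the thinness metric; fix an $f_\bullet$-invariant segment $I_{\vec v}=[\nu_\star,\nu_{\vec v}]$ realizing this.

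To conclude $B=\mc{V}_A$ in the non-degenerate case — hence situation 2 of \hyperref[thm:basins]{Theorem~\ref*{thm:basins}} — take $\nu\in\mc{V}_A$ and track the tangent direction of $f^n_\bullet\nu$ at $\nu_\star$. If it stays free for all $n$, the geometric contraction of the first step gives $f^n_\bullet\nu\to\nu_\star$ in the thinness metric. Otherwise it eventually lies in $U(\vec v)$ for a fixed non-free direction $\vec v$; choosing $I_{\vec v}$ long enough that $T\cap U(\vec v)\subseteq I_{\vec v}$, the retraction $r\eta:=\eta\wedge\nu_{\vec v}$ commutes with $f_\bullet$ on $U(\vec v)$ and satisfies $c(f,r\eta)=c(f,\eta)$ there, whence
\[
d_A(f^n_\bullet\nu,\nu_\star)\leq d_A(f^n_\bullet\nu,f^n_\bullet r\nu)+d_A(f^n_\bullet r\nu,\nu_\star)=c(f^n,r\nu)^{-1}d_A(\nu,r\nu)+d_A(f^n_\bullet r\nu,\nu_\star)\longrightarrow 0,
\]
the second term vanishing by the analysis of $I_{\vec v}$ and the first because $c(f^n,r\nu)\to\infty$, exactly as at the end of \S3.2. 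Thus $\nu\in B$ in every case, so $B=\mc{V}_A$; uniqueness of $\nu_\star$ follows as in the earlier cases, since an end or a second quasimonomial eigenvaluation would attract these orbits elsewhere. The delicate point — the one genuinely new relative to \S3.1--3.2 — is the tangent-dynamics step: steering every orbit into a uniformly contracting free region or one of the finitely many fixed non-free segments, reconciling the choices of these segments with the finite tree $T$, and securing on them the identity-versus-attracting dichotomy, where the eigenvaluation hypothesis is indispensable to exclude a repelling M\"obius map. The remainder is the equicontinuity-plus-retraction bootstrap already carried out in \S3.2, together with \hyperref[thm:equicontinuity]{Theorem~\ref*{thm:equicontinuity}} and \hyperref[prop:basin_closed]{Proposition~\ref*{prop:basin_closed}}.
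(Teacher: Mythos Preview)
Your ingredients are exactly the paper's --- the $c_\infty^{-1}$-contraction in free directions via the Jacobian formula, the M\"obius dichotomy (identity vs.\ strict attraction) along periodic non-free directions, and the retraction argument --- and your stated plan (show $B$ open, then use \hyperref[prop:basin_closed]{Proposition~\ref*{prop:basin_closed}} and connectedness) is the paper's plan as well.

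The gap is in your final paragraph, which quietly abandons that plan. You take an \emph{arbitrary} $\nu\in\mc{V}_A$ and try to track its tangent direction $\vec w_n$ at $\nu_\star$, claiming that either $\vec w_n$ stays free forever or the orbit is eventually absorbed into $U(\vec v)$ for a fixed non-free $\vec v$. But the relation $\vec w_{n+1}=df_\bullet\vec w_n$ holds only when $f_\bullet$ is order preserving on the whole of $U(\vec w_n)$ --- i.e.\ when $\vec w_n$ is free --- or when $f^n_\bullet\nu$ is already close enough to $\nu_\star$ that the local M\"obius description applies. If $f^n_\bullet\nu$ sits in a non-free direction at large $d_A$-distance from $\nu_\star$, its image need not follow the tangent map at all, so your dichotomy is unjustified. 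A related problem: you want $I_{\vec v}$ \emph{long} enough to contain $T\cap U(\vec v)$ so that retraction commutes with $f_\bullet$ on all of $U(\vec v)$, whereas the M\"obius analysis (\hyperref[prop:regularity]{Proposition~\ref*{prop:regularity}(3)}) only yields a \emph{short} invariant segment; these requirements can conflict.

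The paper avoids both issues by staying local, packaging the analysis into \hyperref[lem:directed_basins]{Lemma~\ref*{lem:directed_basins}}: a function $\rho$ on tangent directions at $\nu_\star$ such that $d_A(\nu,\nu_\star)<\rho(\vec v)$ forces $\nu\in B$. On directions whose tangent orbit never meets $\{\vec v_1,\dots,\vec v_k\}$ one sets $\rho=+\infty$ (your contraction step); on periodic non-free directions one takes $\rho$ to be the length of a \emph{short} attracting segment $J'$ and retracts only on the annulus $U(\vec v)\cap U(\vec w)$, not all of $U(\vec v)$; on the remaining directions $\rho$ is defined inductively by pullback, shrinking as needed when the orbit passes through a non-free $\vec v_i$. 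The point your write-up is missing is the conclusion $\inf_{\vec v}\rho(\vec v)=\min_i\rho(\vec v_i)>0$: this uniform lower bound gives a genuine $d_A$-ball $B_A(\nu_\star,\rho_0)\subseteq B$, hence openness, after which the connectedness argument you announced at the outset finishes the proof. (The attracting-vs-identity alternative itself is isolated in the paper as \hyperref[lem:mobius]{Lemma~\ref*{lem:mobius}}, rather than deferred to \cite{favre-jonsson:eigenval}.)
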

\begin{proof}
Assume that we are not in case 2, so that no such interval $I$ exists.
Let $T\subset \mc{V}_A$ be the finite subtree consisting of those valuations $\nu\in \mc{V}_A$ at which at least one of the functions $\nu\mapsto c(f,\nu)$ and $\nu\mapsto \nu(J_f)$ is not locally constant.
Since $T$ is a finite tree, there can be at most finitely many tangent directions at $\nu_\star$ which meet $T$.
Label these directions $\vec{v}_1, \ldots, \vec{v}_k$.
We will need the following two lemmas, the first of which is elementary and left to the reader.

\begin{lem}\label{lem:mobius}
Let $H(t) = (at+b)/(ct+d)$ be a M\"{o}bius transformation with $a,b,c,d\in \N$.
Suppose that $t_0>0$ is a fixed point of $H$, and that $H$ is increasing near $t_0$.
Then $H'(t_0)\leq 1$, with equality if and only if $H(t) = t$.
\end{lem}

\begin{lem}\label{lem:directed_basins}
There exists a function $\rho$ from the tangent space at $\nu_\star$ to $(0,+\infty]$ such that
\begin{enumerate}
\item[$1.$] if $\nu\in \mc{V}_A$ lies in the direction $\vec{v}$ and $d_A(\nu, \nu_\star)<\rho(\vec{v})$, then $\nu\in B$, and
\item[$2.$] one has $\inf_{\vec{v}} \rho(\vec{v}) = \min_{i=1,\ldots, k}\rho(\vec{v}_i)>0$.
\end{enumerate}
\end{lem}
\begin{proof}
Let $S_1$ denote the set of tangent vectors $\vec{v}$ at $\nu_\star$ such that $df^n_\bullet\vec{v}\notin\{\vec{v}_1,\ldots, \vec{v}_k\}$ for all $n\geq 1$, and let $S_2$ denote the set of those $\vec{v}_1,\ldots, \vec{v}_k$ which are $df_\bullet$-periodic.
Then $S_1$ and $S_2$ are disjoint invariant sets, and the orbit of any tangent vector $\vec{v}$ at $\nu_\star$ eventually lands in either $S_1$ or $S_2$.
We will begin by defining $\rho(\vec{v})$ for those $\vec{v}$ lying in either $S_1$ or $S_2$.

First suppose $\vec{v}\in S_1$.
We then set $\rho(\vec{v}) = +\infty$.
By \hyperref[prop:regularity]{Proposition~\ref*{prop:regularity}(1)}, $f_\bullet$ is order preserving on $U(\vec{v})$, so that $f_\bullet(U(\vec{v}))\subseteq U(df_\bullet\vec{v})$.
Moreover, since $c(f,\nu) = c_\infty$ and $\nu(J_f) = \nu_\star(J_f)$ for every $\nu\in U(\vec{v})$, the \hyperref[jacobian_formula]{Jacobian formula} gives $d_A(f_\bullet\nu, \nu_\star) = c_\infty^{-1}d_A(\nu, \nu_\star)$ for $\nu\in U(\vec{v})$.
Upon iterating, it follows that $d_A(f^n_\bullet\nu, \nu_\star) = c_\infty^{-n}d_A(\nu, \nu_\star)$, so that $U(\vec{v})\subseteq B$.

Now suppose that $\vec{v}\in S_2$, and let $m$ be the period of $\vec{v}$.
Let $I = [\nu_\star, \mu]$ be some segment in the direction $\vec{v}$.
By \hyperref[prop:regularity]{Proposition~\ref*{prop:regularity}(3)}, there is a subinterval $J\subseteq I$ containing $\nu_\star$ such that $f^m_\bullet J\subseteq I$ and such that there is a M\"{o}bius transformation $H(t) = (at+b)/(ct+d)$ with $a,b,c,d\in \N$ for which $\alpha(f^m_\bullet \nu) = H(\alpha(\nu))$ for $\nu\in J$.
By \hyperref[lem:mobius]{Lemma~\ref*{lem:mobius}}, either $H'(\alpha(\nu_\star)) <1$ or else $H(t) = t$.
However, since we have assumed that $f^m_\bullet$ cannot act as the identity on $J$, we must have $H'(\alpha(\nu_\star))<1$.
Thus $\nu_\star$ is attracting in the sense that there is a subinterval $J'\subseteq J$ containing $\nu_\star$ such that $f^{mn}_\bullet \to \nu_\star$ on $J'$ in the skewness topology.
However, the thinness and skewness topologies are equivalent on $J'$, so in fact $J'\subseteq B$.

Suppose that $J' = [\nu_\star, \nu_0]$, and let $\vec{w}$ be the tangent vector at $\nu_0$ in the direction of $\nu_\star$.
Define $U = U(\vec{v})\cap U(\vec{w})$.
Let $r\colon U\to J'$ be the usual retraction map, mapping $\nu\in U$ to the closest point of $J'$ to $\nu$.
If $J'$ is chosen small enough, then the functions $\nu\mapsto c(f^m,\nu)$ and $\nu\mapsto \nu(J_{f^m})$ are constant on components of $U\smallsetminus J'$.
It follows that $f^m_\bullet$ is order preserving on components of $U\smallsetminus J'$.
This says precisely that $U$ is $f^m_\bullet$-invariant, and that $r\circ f^m_\bullet = f^m_\bullet\circ r$ on $U$.
Using the fact that $c(f^m,\nu)$ and $\nu(J_{f^m})$ are constant on components of $U\smallsetminus J'$ one derives
\[
d_A(f^{mn}_\bullet\nu, \nu_\star) \leq d_A(f^{mn}_\bullet \nu, rf^{mn}_\bullet\nu) + d_A(rf^{mn}_\bullet\nu, \nu_\star) \leq c(f^{mn}, r\nu)^{-1}d_A(\nu, r\nu) + d_A(f^{mn}_\bullet r\nu, \nu_\star)\to 0
\]
for $\nu\in \mc{V}_A\cap U$, so that $\mc{V}_A\cap U\subseteq B$.
We may therefore define $\rho(\vec{v}) = d_A(\nu_\star, \nu_0)$.

We have defined $\rho$ on $S_1\cup S_2$ in such a way that properties (1) and (2) hold.
We now inductively define $\rho$ on all tangent vectors as follows.
Suppose that $\vec{v}$ is such that $\rho(df_\bullet\vec{v})$ has been defined.
If $\vec{v}\notin \{\vec{v}_1,\ldots, \vec{v}_k\}$, then one has $f_\bullet U(\vec{v})\subseteq U(df_\bullet\vec{v})$ and $d_A(f_\bullet \nu, \nu_\star) = c_\infty^{-1}d_A(\nu, \nu_\star)$ for every $\nu\in \mc{V}_A\cap U(\vec{v})$.
Thus we may define $\rho(\vec{v}) = c_\infty \rho(df_\bullet \vec{v})$.
On the other hand, if $\vec{v}\in \{\vec{v}_1,\ldots, \vec{v}_k\}$, simply choose $\rho(\vec{v})$ to be a value small enough that every $\nu\in U(\vec{v})$ with $d_A(\nu, \nu_\star)<\rho(\vec{v})$ satisfies $f_\bullet \nu\in U(df_\bullet\vec{v})$ and $d_A(f_\bullet\nu, \nu_\star)<\rho(df_\bullet \vec{v})$.
\end{proof}

We now continue with the proof of \hyperref[prop:divisorial]{Proposition~\ref*{prop:divisorial}}.
Let $\rho_0 = \inf_{\vec{v}}\rho(\vec{v})$, where $\rho$ is as in \hyperref[lem:directed_basins]{Lemma~\ref*{lem:directed_basins}}.
Then the open ball $B_A(\nu_\star, \rho_0)$ is contained in $B$.
It follows immediately that $B$ is open in the thinness topology, and hence $B = \mc{V}_A$.
As we have reasoned before, $\nu_\star$ is necessarily unique, putting us in situation 2 of \hyperref[thm:basins]{Theorem~\ref*{thm:basins}}.
\end{proof}

\section{The case of a segment of eigenvaluations}

In this section we complete the proof of \hyperref[thm:basins]{Theorem~\ref*{thm:basins}}, focusing on its third situation.
We therefore assume that $f\colon (\C^2,0)\to (\C^2,0)$ is a dominant superattracting holomorphic germ that does not fall into either situation 1 or 2 of \hyperref[thm:basins]{Theorem~\ref*{thm:basins}}.
Recall that in \S3 we proved that this implies there is a non-degenerate segment $J\subseteq \mc{V}$ of quasimonomial valuations such that
\begin{enumerate}
\item[$1.$] $J$ contains at least one eigenvaluation for $f$, and 
\item[$2.$] there is an integer $m\geq 1$ such that $J$ consists entirely of eigenvaluations for $f^m$.
\end{enumerate}
Moreover, in the case where $J$ contains an irrational eigenvaluation of $f$, we saw that one may take $m = 2$.
We will shortly see this remains true when $J$ contains a divisorial eigenvaluation of $J$.

\begin{prop}
The germ $f$ is necessarily finite, that is, $f$ has no contracted curves. 
\end{prop}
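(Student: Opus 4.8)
The statement to prove is that if $f$ falls into situation 3 of Theorem~\ref{thm:basins} — i.e.\ there is a non-degenerate segment $J$ of quasimonomial valuations consisting entirely of eigenvaluations for $f^m$ — then $f$ has no contracted curves, hence is finite. The natural strategy is contradiction: suppose $f$ has a contracted curve valuation $\nu_C = \lambda\nu_\phi$, where $C = \{\phi = 0\}$ is a formal curve contracted to $0$, and derive an incompatibility with the existence of the fixed segment $J$. The key leverage point is the Jacobian formula from Proposition~\ref{prop:regularity}(2): if $\nu$ is fixed by $f_\bullet^m$ and lies in $\mc{V}_A$, then $c(f^m,\nu)A(\nu) = A(\nu) + \nu(J_{f^m})$, so $\nu(J_{f^m}) = (c(f^m,\nu)-1)A(\nu) = (c_\infty^m - 1)A(\nu)$ for every $\nu \in J$ (using that an eigenvaluation in $\mc{V}_A$ has $c(f,\cdot) = c_\infty$). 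Thus the function $\nu \mapsto \nu(J_{f^m})$ is an affine function of $A(\nu)$ with slope $c_\infty^m - 1 > 0$ along the whole segment $J$.

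**The main obstruction and how to resolve it.** The hard part is relating the contracted curve to this linear growth of $\nu(J_{f^m})$ along $J$. The idea: the Jacobian $J_{f^m}$ factors, up to units, through the critical locus of $f^m$, and a contracted curve $C$ of $f$ forces $C$ (or its image branches) to appear with infinite or very high order in the critical set under iteration — more precisely, one should track the curve valuation $\nu_C$ and use Proposition~\ref{prop:skewness_eval}: for $\nu \in J$, $\nu(J_{f^m}) = \sum_i m_i \,\alpha(\nu \wedge \nu_{\psi_i})\,\ord_0(\psi_i)$ where $\prod \psi_i^{m_i}$ is a factorization of $J_{f^m}$ into irreducibles. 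Since $J$ is a non-degenerate \emph{segment}, for $\nu$ ranging over $J$ the quantity $\alpha(\nu \wedge \nu_{\psi_i})$ is eventually constant in $\nu$ for each fixed $\psi_i$ unless $\nu_{\psi_i}$ lies ``beyond'' $J$ in the tree, i.e.\ unless the branch of $\psi_i$ passes through $J$. Hence linear growth of $\nu(J_{f^m})$ with positive slope along $J$ is only possible if some irreducible factor $\psi_i$ of $J_{f^m}$ has its curve valuation $\nu_{\psi_i}$ dominating a point of $J$, and the total such contribution must have slope exactly $c_\infty^m - 1$. The plan is then to show that if $f$ has a contracted curve, one instead gets that $J_{f^m}$ — or rather the relevant part of its vanishing — is \emph{supported off} $J$ in a way that makes $\nu \mapsto \nu(J_{f^m})$ locally constant on part of $J$, contradicting the strict positivity of the slope. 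Equivalently, and perhaps more cleanly: a contracted curve valuation $\nu_C$ is an end of $\mc V$ and $f_\bullet \nu_C$ is divisorial; one should argue that the presence of a contracted curve forces the tree $T$ of Proposition~\ref{prop:loc_const} (where $c(f,-)$ is non-constant) together with the non-local-constancy locus of $\nu \mapsto \nu(J_f)$ to ``capture'' all of $J$'s behavior, which is incompatible with $J$ being a segment of fixed points with the required Jacobian growth.

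**Refined approach via the eigenvaluation structure.** A cleaner route may be to use what Theorem~\ref{thm:eigenval} and \S3 give us directly. Since we are in situation 3, the segment $J$ is $f_\bullet^m$-fixed and contains an eigenvaluation. If $f$ had a contracted curve valuation $\nu_C$, then $c(f,\nu_C) = +\infty$, and by the concavity/piecewise-affineness of $c(f,-)$ along $[\ord_0, \nu_C]$ (Proposition~\ref{prop:loc_const}) together with continuity of $c(f,-)$, the function $c(f,-)$ would have to increase without bound toward $\nu_C$. One would then examine where $\nu_C$ sits relative to $J$: either $\nu_C$ lies in a direction $\vec v$ off of $J$, in which case by Proposition~\ref{prop:regularity}(1) the dynamics $f_\bullet$ is order-preserving on that component and, combined with the retraction argument used repeatedly in \S3, one should derive that $d_A(f^n_\bullet\nu, \nu_\star) \to 0$ for $\nu$ near $\nu_C$ — but this contradicts $c(f,\nu) \to \infty$ as $\nu \to \nu_C$, since a valuation attracted to a finite-thinness eigenvaluation has bounded attraction rates. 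The remaining case is that $\nu_C$ dominates a point of $J$, i.e.\ some $\eta \in J$ lies on $[\ord_0, \nu_C]$; but then applying $f_\bullet^m$ fixes $\eta$ while pushing forward the segment $[\eta, \nu_C]$, and tracking skewness via Proposition~\ref{prop:regularity}(3) and the Möbius-transformation analysis (as in Lemma~\ref{lem:mobius}) should show that the only fixed-point configuration compatible with a contracted end forces $J$ to degenerate to a point, contradiction. I expect the case analysis ``$\nu_C$ off $J$ versus $\nu_C$ beyond $J$'' to be the main bookkeeping obstacle; the second case, where the contracted curve lies beyond the fixed segment, is the delicate one and likely requires carefully combining the Jacobian formula with the fact that $c(f^m, \eta) = c_\infty^m$ is finite for $\eta \in J$ while $c(f^m, \nu_C) = +\infty$, which is impossible since attraction rates are monotone increasing along $[\ord_0,\nu_C] \supseteq [\ord_0, \eta]$ only up to the finite value $c_\infty^m$ being forced at the interior fixed point $\eta$ — wait, monotonicity runs the wrong way, so in fact $c(f^m, \eta) \leq c(f^m, \nu_C) = \infty$ is no contradiction by itself, and one genuinely needs the finer Jacobian-slope argument sketched above to close it.
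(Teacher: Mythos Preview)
Your proposal is a collection of sketches rather than a proof, and the main approaches you outline have real gaps.

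\textbf{The Jacobian approach does not close.} You correctly derive from the Jacobian formula that $\nu(J_{f^m}) = (c_\infty^m - 1)A(\nu)$ for $\nu \in J$, hence is affine in $\alpha(\nu)$ along $J$ with positive slope. You then want the contracted curve $C = \{\phi = 0\}$ to force $\nu \mapsto \nu(J_{f^m})$ to be locally constant on part of $J$. But this does not follow. Even granting that $\phi \mid J_{f^m}$ and that the contribution of the factor $\phi$ to $\nu(J_{f^m})$ is constant in $\nu$ (once you know $\nu_\phi$ lies off $J$, which you have not established), the \emph{other} irreducible factors of $J_{f^m}$ can --- and in fact must --- supply the linear growth. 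There is no contradiction from the Jacobian factorization alone.

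\textbf{The case analysis is incomplete.} In your ``refined approach'' you split on whether $\nu_C$ lies off $J$ or dominates a point of $J$. The first case is argued confusedly: convergence $f_\bullet^n\nu \to \nu_\star$ for $\nu$ near $\nu_C$ says nothing against $c(f,\nu) \to \infty$ as $\nu \to \nu_C$; attraction rates along an orbit and the function $c(f,-)$ on a neighborhood are different objects. The second case you leave open, and as you yourself note at the end, monotonicity of $c(f^m,-)$ gives no contradiction.

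\textbf{What the paper actually does.} The paper's argument is shorter and avoids the Jacobian entirely. First, since $c(f,-) \equiv c_\infty$ along $J$ and is concave and piecewise affine in $\alpha$, Proposition~\ref{prop:loc_const} forces $c(f,-) \equiv c_\infty$ on the whole open set $U(\vec v)$ beyond the minimal point of $J$. Since $c(f,-)$ is unbounded near $\nu_\phi$, this immediately gives $\nu_\phi \notin U(\vec v)$ --- ruling out your ``$\nu_C$ dominates $J$'' case in one line. Second, instead of working with $J_{f^m}$, the paper picks any irreducible $\psi$ with $\nu_\psi > \nu_1$ (the upper endpoint of $J$). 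Then for $\nu \in J$ one has $\nu(\psi) = \ord_0(\psi)\alpha(\nu)$, a linear function with zero constant term. On the other hand, since $C$ is contracted, $\phi \mid \psi \circ f$, and the eigenvaluation relation $f_*\nu = c_\infty\nu$ gives
\[
\ord_0(\psi)\alpha(\nu) = c_\infty^{-1}\nu(\psi\circ f) = c_\infty^{-1}\ord_0(\phi)\,\alpha(\nu\wedge\nu_\phi) + c_\infty^{-1}\nu(\psi\circ f/\phi).
\]
The right side is piecewise linear in $\alpha(\nu)$ with constant term at least $c_\infty^{-1}\ord_0(\phi)\,\alpha(\nu\wedge\nu_\phi) > 0$ (the first term is constant because $\nu_\phi \notin U(\vec v)$), while the left side has zero constant term. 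This is the contradiction. The key move you are missing is to pull back a \emph{test function} $\psi$ chosen so that $\nu(\psi)$ has no constant term; the contracted curve then injects an unavoidable positive constant into $\nu(\psi\circ f)$.
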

\begin{proof}
Since $f$ is finite if and only if $f^m$ is finite, we may without loss of generality assume that $J$ consists of eigenvaluations of $f$.
Moreover, by shrinking $J$ if necessary, we may assume $J = [\nu_0,\nu_1]$, where $\nu_0<\nu_1$.
We begin by noting that $c(f,-)\equiv c_\infty$ along $J$, since one always has $c(f,\nu) = c_\infty$ for eigenvaluations $\nu$.
If $\vec{v}$ is the tangent vector at $\nu_0$ in the direction of $\nu_1$, it follows by \hyperref[prop:loc_const]{Proposition~\ref*{prop:loc_const}} that $c(f,-)\equiv c_\infty$ on $U(\vec{v})$. 

Assume for contradiction that there is a contracted curve $C = \{\phi = 0\}$ for $f$.
Since $c(f,-)$ must be unbounded near the contracted curve valuation $\nu_\phi$, we can conclude $\nu_\phi\notin U(\vec{v})$.
Let $\psi\in \mf{m}$ be any irreducible element such that $\nu_1<\nu_\psi$.
Then by \hyperref[prop:skewness_eval]{Proposition~\ref*{prop:skewness_eval}}, $\nu(\psi) = \ord_0(\psi)\alpha(\nu)$ for every $\nu\in J$.
On the other hand, $f_*\nu = c_\infty\nu$ for $\nu\in J$, so that $\nu(\psi) = c_\infty^{-1}(f_*\nu)(\psi) = c_\infty^{-1}\nu(\psi\circ f)$.
Because $C$ is a contracted curve, $\phi\mid \psi\circ f$, and hence we can write
\[
\ord_0(\psi)\alpha(\nu) = \nu(\psi) = c_\infty^{-1}\nu(\phi) + c_\infty^{-1}\nu(\psi\circ f/\phi) = c_\infty^{-1}\ord_0(\phi)\alpha(\nu\wedge\nu_\phi) + c_\infty^{-1}\nu(\psi\circ f/\phi).
\]
Because $\nu_\phi\notin U(\vec{v})$, the term $\alpha(\nu\wedge\nu_\phi)$ is constant for $\nu\in J$.
Moreover, \hyperref[prop:skewness_eval]{Proposition~\ref*{prop:skewness_eval}} implies that $\nu(\psi\circ f/\phi)$ is a piecewise linear function with non-negative coefficients on $J$ with respect to the skewness parameterization.
We have thus derived that $\ord_0(\psi)\alpha(\nu)$ --- a linear function in $\alpha(\nu)$ with no constant term --- is equal the piecewise linear function $c_\infty^{-1}\ord_0(\phi)\alpha(\nu\wedge \nu_\phi) + c_\infty^{-1}\nu(\psi\circ f/\phi)$ with nonzero constant terms $\geq c_\infty^{-1}\ord_0(\phi)\alpha(\nu\wedge\nu_\phi)>0$, a contradiction.
\end{proof}

\begin{prop}\label{prop:tot_inv}
Every $\nu\in J$ is totally invariant for $f^m_\bullet$, that is, $f^{-m}_\bullet(\nu) = \{\nu\}$.
Moreover, if $\nu\in J$ is divisorial, then the degree of the tangent map $df_\bullet^m$ at $\nu$ is $>1$.
\end{prop}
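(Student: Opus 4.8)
The plan is to reduce to the case of a divisorial $\nu$, exploit the finiteness of $f$ through a degree formula for the tree map $f_\bullet$, rule out spurious preimages using the constancy of $c(f,-)$ near $J$ together with the Jacobian formula, and finally settle the statement about $\deg(df_\bullet)$ by a normal‑form argument.

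\smallskip

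\noindent\textbf{Reductions.}
Since $f$ is finite if and only if $f^{m}$ is, and $f^{m}_\bullet$ fixes $J$ pointwise, I would replace $f$ by $f^{m}$: thus we may assume $J$ consists of eigenvaluations of $f$, so $f_\bullet$ is the identity on $J$ and $c(f,\nu)=c_\infty$ for all $\nu\in J$. Shrinking $J$, write $J=[\nu_0,\nu_1]$ with $\nu_0<\nu_1$, and recall from the proof of the preceding proposition that $c(f,-)\equiv c_\infty$ on $U(\vec v)$, where $\vec v$ is the tangent vector at $\nu_0$ toward $\nu_1$. It is enough to prove both assertions for divisorial $\nu\in J$: divisorial valuations are dense in $J$, and if some $\mu\neq\nu$ had $f_\bullet\mu=\nu$ with $\nu\in J$ irrational, then picking a divisorial $\nu'\in J$ close to $\nu$ (on the appropriate side) and applying Proposition~\ref{prop:regularity}(3) with the weak continuity of $f_\bullet$ near $\mu$ produces a divisorial $\mu'\neq\nu'$ with $f_\bullet\mu'=\nu'$, contradicting the divisorial case.

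\smallskip

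\noindent\textbf{The divisorial case and a degree formula.}
Fix a divisorial $\nu=b_E^{-1}\ord_E\in J$ realized on a blowup $\pi\colon X\to\C^2$; enlarging $X$, the germ $f$ lifts to a meromorphic self‑map $\hat f\colon X\dashrightarrow X$ with $\hat f(E)=E$, and the tangent map $df_\bullet$ at $\nu$ is the rational self‑map $\hat f|_E\colon E\to E$, of some degree $\delta:=\deg(df_\bullet|_\nu)$. Since $f$ is finite it has no contracted curves (shown just above), so $\hat f$ contracts no curve; hence for generic $q\in E$ the fibre $\hat f^{-1}(q)$ is finite and supported on the exceptional primes mapping onto $E$, and counting preimages with multiplicity gives
\[
e(f)\;=\;\sum_{\mu\in f_\bullet^{-1}(\nu)}c(f,\mu)\,\deg(df_\bullet|_\mu),
\]
where $e(f)$ is the topological degree of $f$, and the sum is over the (necessarily divisorial) preimages of $\nu$; cf. \cite{favre-jonsson:eigenval}, \cite{favre:rigidgerms} and the appendix of \cite{favre-jonsson:dynamicalcompactifications}. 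In particular $e(f)\geq c(f,\nu)\,\delta=c_\infty\,\delta$, with equality exactly when $\nu$ is totally invariant. Note also that every $\mu\in f_\bullet^{-1}(\nu)$ has finite thinness: indeed $c(f,\mu)<\infty$ (no contracted curves) and the Jacobian formula $c(f,\mu)A(\nu)=A(\mu)+\mu(J_f)$ gives $A(\mu)\leq c(f,\mu)A(\nu)<\infty$, so $\mu\in\mathcal V_A$.

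\smallskip

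\noindent\textbf{No spurious preimages (the crux).}
To obtain the reverse inequality $e(f)\leq c_\infty\,\delta$ — equivalently, total invariance of $\nu$ — suppose $\mu\in f_\bullet^{-1}(\nu)$ with $\mu\neq\nu$ and take $\nu$ interior to $J$. By Proposition~\ref{prop:regularity}(3) the last monotone piece $[\eta,\mu]$ of the geodesic $[\ord_0,\mu]$ is mapped monotonically by $f_\bullet$ onto a short segment $S\subseteq\mathcal V$ having $\nu$ as an endpoint; using Proposition~\ref{prop:loc_const} and the constancy of $c(f,-)$ on $U(\vec v)$ one shows that $S$ must lie on $J$, and since $f_\bullet$ is the identity on $J\supseteq S$, comparing $A$ and $\nu\mapsto\nu(J_f)$ along the two segments via the Jacobian formula (on $S\subseteq J$ one has $\eta'(J_f)=(c_\infty-1)A(\eta')$, while $c(f,\cdot)=c_\infty$ on both segments) forces $[\eta,\mu]=S$, hence $\mu=\nu$, a contradiction. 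Thus $f_\bullet^{-1}(\nu)=\{\nu\}$ and $e(f)=c_\infty\,\delta$. \emph{This step is the main obstacle}: the degree formula only yields the easy direction, and the real work is to show that the segment $J$ absorbs the whole fibre $f_\bullet^{-1}(\nu)$; one has to combine the finiteness of $f$, the constancy of $c(f,-)$ on a weak neighborhood of $J$, the Jacobian formula, and possibly a refined local count in the degree formula — the other steps are comparatively routine.

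\smallskip

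\noindent\textbf{The tangent degree exceeds one.}
Because $e(f)$ and $c_\infty$ do not change upon replacing the eigenvaluation by another divisorial point of $J$, the integer $\delta=e(f)/c_\infty$ is the same at every divisorial $\nu\in J$. If $\delta=1$, then $e(f)=c_\infty$ and $df_\bullet|_\nu$ is an automorphism of $E$ fixing the two $J$‑directions at $\nu$; in the model this means $\hat f$ fixes $E$ pointwise, is totally ramified of order $c_\infty$ transverse to $E$, and otherwise unramified over the generic point of $E$. Reading off the differential (after a blow‑down, or via the local normal form of such a germ) shows $f$ would be analytically conjugate to $(x,y)\mapsto(x,y^{c_\infty})$, whose derivative at $0$ has eigenvalue $1$ and is not nilpotent, contradicting that $f$ is superattracting. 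Hence $\delta\geq 2$, which completes the proof.
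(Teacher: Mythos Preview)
Your proposal follows a genuinely different route from the paper, but the key step --- the one you yourself flag as ``the main obstacle'' --- is not actually carried out, and the sketch you give does not work.

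First, the degree formula you invoke is not quite right. With $\mu=b_{E'}^{-1}\ord_{E'}$ a divisorial preimage of $\nu=b_E^{-1}\ord_E$, the ramification index of $E'$ over $E$ is $k_{E'}=c(f,\mu)\,b_{E'}/b_E$, so the correct count is $e(f)=\sum_{\mu}c(f,\mu)\,(b_{E'}/b_E)\,\deg(df_\bullet|_\mu)$; the $b$-factors cannot be dropped. This does not harm the inequality $e(f)\geq c_\infty\delta$ (take the term $\mu=\nu$), but it means the formula cannot be used as cleanly as you suggest.

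Second, and more seriously, your ``crux'' argument does not force a contradiction. You assert that the image segment $S=f_\bullet[\eta,\mu]$ must lie in $J$, but nothing in Proposition~\ref{prop:loc_const} or the constancy of $c(f,-)$ on $U(\vec v)$ prevents $S$ from leaving $J$ in some other tangent direction at $\nu$. Even granting $S\subseteq J$, the Jacobian comparison you propose yields only $c_\infty A(f_\bullet\tau)=A(\tau)+\tau(J_f)$ on $[\eta,\mu]$ and $(f_\bullet\tau)(J_f)=(c_\infty-1)A(f_\bullet\tau)$ on $S$; these two relations are linked by $f_\bullet$, not by any identity between $\tau(J_f)$ and $(f_\bullet\tau)(J_f)$, so nothing forces $[\eta,\mu]=S$. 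The argument simply does not close.

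Third, your proof that $\delta>1$ is too loose: degree one for $df_\bullet|_E$ and fixing two points gives $\hat f|_E=\mathrm{id}$, but the jump from there to ``$f$ is analytically conjugate to $(x,y^{c_\infty})$ on $(\C^2,0)$'' is a substantial normal-form statement that you have not justified.

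The paper's proof avoids all of this with an intersection-theoretic variation trick: for divisorial $\nu\in J$ and nearby $\mu\in J$ with $\mu<\nu$, one expands $f^*Z_{\nu,\pi}=\sum_{f_\bullet\nu'=\nu}a_{\nu'}Z_{\nu',\pi'}$ (finiteness is used here), applies the projection formula, and obtains $c_\infty\alpha(\mu)=\sum a_{\nu'}\alpha(\mu\wedge\nu')$. Letting $\mu$ vary along $J$ toward $\nu$, the $\nu'=\nu$ term is $a_\nu\alpha(\mu)$ while all other terms are constant; this forces $a_\nu=c_\infty$ and kills the remaining terms, so $\nu$ is totally invariant. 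Since $a_\nu$ is exactly $\deg(df_\bullet|_\nu)$, one gets $\delta=c_\infty>1$ for free, with no normal-form argument needed.
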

\begin{proof}
Replacing $f$ by $f^m$, we may assume without loss of generality that $J$ consists entirely of eigenvaluations for $f$.
By the continuity of $f_\bullet$, we only need to prove that a dense set of $\nu\in J$ are totally invariant for $f_\bullet$.
We will prove it for the dense set $S$ of divisorial valuations $\nu\in J$ that are not the minimal element of $J$.

Let $\nu\in S$, and let $\mu\in S$ be such that $\mu<\nu$.
Let $\vec{v}$ be the tangent vector at $\mu$ in the direction of $\nu$.
Since $c(f,-)\equiv c_\infty$ on $J$, \hyperref[prop:loc_const]{Proposition~\ref*{prop:loc_const}} implies $c(f,-)$ is constant on $U(\vec{v})$, and thus by \hyperref[prop:regularity]{Proposition~\ref*{prop:regularity}(1)}, $f_\bullet$ is order preserving on $U(\vec{v})$.
It follows that if $\nu'\in \mc{V}$ is such that $f_\bullet\nu' = \nu$, then either $\nu' = \nu$ or $\nu'\wedge \nu<\nu$.
In order to proceed, we will need the geometric interpretation of skewness discussed in \S1.5.
Let $\pi\colon X\to \C^2$ be a blowup over $0$ such that $\nu = b_E^{-1}\ord_E$ and $\mu = b_F^{-1}\ord_F$ for some exceptional primes $E$ and $F$ of $\pi$.
Let $\pi'\colon X'\to \C^2$ be a blowup over $0$ that dominates $\pi$ such that $f$ lifts to a holomorphic map $f\colon X'\to X$ over $0$.
On the one hand, we have that $f_*Z_{\mu, \pi'} = Z_{f_*\mu,\pi} = c_\infty Z_{\mu,\pi}$ by \cite[Lemma 1.10]{favre:holoselfmapssingratsurf}, so that
\[
-(f_*Z_{\mu, \pi'}\cdot Z_{\nu,\pi}) = -c_\infty(Z_{\mu,\pi}\cdot Z_{\nu,\pi}) = c_\infty \alpha(\mu\wedge\nu) = c_\infty \alpha(\mu).
\]
On the other hand, because $f$ is finite we can apply the projection formula to deduce $c_\infty\alpha(\mu) = -(Z_{\mu,\pi'}\cdot f^*Z_{\nu,\pi})$.
Moreover, since $f$ is finite, \cite[Lemma 1.10]{favre:holoselfmapssingratsurf} shows that there exist positive constants $a_{\nu'}>0$ such that
\[
f^*Z_{\nu, \pi} = \sum_{f_\bullet\nu' = \nu} a_{\nu'}Z_{\nu', \pi'}
\]
(see also \cite[Proposition A.7]{boucksom-favre-jonsson:degreegrowthmeromorphicsurfmaps}).
We can then compute
\[
c_\infty\alpha(\mu) = -(Z_{\mu,\pi'}\cdot f^*Z_{\nu,\pi}) = \sum_{f_\bullet\nu' = \nu} -a_{\nu'}(Z_{\mu,\pi'}\cdot Z_{\nu',\pi'}) = \sum_{f_\bullet\nu' = \nu} a_{\nu'}\alpha(\mu\wedge \nu').
\]
If we take $\nu' = \nu$, then $\nu'\wedge \mu = \mu$.
On the other hand, if $\nu'\neq \nu$, then $\mu\wedge \nu' = \nu\wedge \nu'$ whenever $\mu$ is close enough to $\nu$.
Thus one has the equality
\[
(c_\infty - a_\nu)\alpha(\mu) = \sum_{\substack{f_\bullet \nu' = \nu \\ \nu'\neq \nu}} a_{\nu'}\alpha(\nu\wedge \nu')
\]
when $\mu$ is close enough to $\nu$.
Note that the right hand side of this expression is constant as $\mu$ varies, whereas the left hand side will vary unless $c_\infty = a_\nu$.
Thus we must have $c_\infty = a_\nu$.
However, this implies the right hand side of the expression is $0$, which is impossible unless there are no $\nu'\neq \nu$ such that $f_\bullet \nu' = \nu$.
We conclude that $\nu$ is totally invariant. 

To prove the last statement in the proposition, recall that the tangent space at $\nu$ is in bijective correspondence with the points of $E$, and the tangent map $df_\bullet$ at $\nu$ is given by $f\colon E\to E$.
Direct computation shows that $a_\nu$ is precisely the degree of the map $f\colon E\to E$ (see also the proof of \cite[Lemma 1.10]{favre:holoselfmapssingratsurf}).
The proposition then follows from the fact that $a_\nu = c_\infty>1$.
\end{proof}

\begin{prop}
Shrinking $J$ if necessary, one can suppose $m = 2$, so that $J$ consists entirely of eigenvaluations for $f^2$.
\end{prop}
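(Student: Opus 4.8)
The plan is to show that the minimal period of $f_\bullet$ along $J$ is forced to be at most $2$ by a Riemann--Hurwitz count for the tangent map at a divisorial eigenvaluation contained in $J$. Since the irrational case has already been handled, I may assume $J$ contains a divisorial eigenvaluation $\nu_\star$ of $f$, and after shrinking $J$ I may assume $\nu_\star$ is an endpoint, say $J = [\nu_\star,\nu_1]$. Let $m\geq 1$ be minimal with $f^m_\bullet|_J = \mathrm{id}$; the goal is $m\leq 2$.

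First I would localize at $\nu_\star$: write $\nu_\star = b_E^{-1}\ord_E$ on a blowup on which $f$ lifts to $f\colon X\dashrightarrow X$ carrying $E$ to $E$, and let $\phi := f|_E\colon E\to E$ be the tangent map $df_\bullet$ at $\nu_\star$, a self-map of $E\cong\mathbb{P}^1$ of degree $\delta := a_{\nu_\star}$. By \hyperref[prop:tot_inv]{Proposition~\ref{prop:tot_inv}} the degree $\delta^m$ of $df^m_\bullet$ at $\nu_\star$ exceeds $1$, so $\delta\geq 2$. Let $\vec v_0$ be the tangent direction into $J$ at $\nu_\star$, corresponding to $p_0\in E$. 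If $df_\bullet\vec v_0 = \vec v_0$, then $f_\bullet$ is order preserving near $\nu_\star$ along $J$ and acts on the skewness parameter through an increasing M\"obius transformation with integer coefficients whose $m$-th power is the identity; as in \hyperref[lem:mobius]{Lemma~\ref{lem:mobius}} such a transformation is the identity, so after shrinking $J$ one gets $f_\bullet|_J = \mathrm{id}$, i.e. $m=1$. Hence I may assume $df_\bullet\vec v_0\neq\vec v_0$; running the same M\"obius argument for $f^d_\bullet$, where $d$ is the $df_\bullet$-period of $\vec v_0$, shows (after a further shrinking of $J$) that $f^d_\bullet|_J = \mathrm{id}$, so $m=d$, and that the orbit $\vec v_0,\dots,\vec v_{m-1}$ consists of $m$ distinct directions at $\nu_\star$, equivalently $m$ distinct points $p_0,\dots,p_{m-1}\in E$ with $\phi(p_i)=p_{i+1}$ (indices mod $m$).

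Next I would bring in total invariance. The segments $J^{(i)} := f^i_\bullet(J)$ are again nondegenerate segments on which $f^m_\bullet$ acts as the identity (because $f^m_\bullet$ commutes with $f^i_\bullet$), lying in the directions $\vec v_i$ at $\nu_\star$; moreover $\nu_\star$ is totally invariant for $f_\bullet$ itself, since $f^{-1}_\bullet(\nu_\star)\subseteq f^{-m}_\bullet(\nu_\star)=\{\nu_\star\}$ by \hyperref[prop:tot_inv]{Proposition~\ref{prop:tot_inv}} and the fact that $\nu_\star$ is fixed. Using that each point of $J^{(i+1)}$ is totally invariant for $f^m_\bullet$ (\hyperref[prop:tot_inv]{Proposition~\ref{prop:tot_inv}} applied with $J^{(i+1)}$ in place of $J$) together with the finiteness of $f$, I would argue that $\phi$ is totally ramified at $p_i$ over $p_{i+1}$, i.e. $\phi^{-1}(p_{i+1})=\{p_i\}$: a point of $E$ over $p_{i+1}$ gives a tangent direction along which $f_\bullet$ pushes valuations near $\nu_\star$ into the $\vec v_{i+1}$-branch, and total invariance of the valuations of $J^{(i+1)}$ forces any such preimage to lie in $J^{(i)}\subseteq U(\vec v_i)$. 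Granting this, $\phi$ has ramification index $\delta-1$ at each of the $m$ distinct points $p_0,\dots,p_{m-1}$, so Riemann--Hurwitz for $\phi\colon\mathbb{P}^1\to\mathbb{P}^1$ gives $m(\delta-1)\leq 2\delta-2 = 2(\delta-1)$; since $\delta\geq 2$ this forces $m\leq 2$, and a final shrinking of $J$ gives $f^2_\bullet|_J=\mathrm{id}$.

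I expect the main obstacle to be the total-ramification step, since $f_\bullet$ is only known to be order preserving on the segment $J^{(i)}$ itself — the tree $T$ on which $c(f,-)$ or $\nu\mapsto\nu(J_f)$ fails to be locally constant contains $J^{(i)}$ all the way up to $\nu_\star$, so one must rule out the a priori possibility that some branch in the direction $\vec v_{i+1}$ at $\nu_\star$, lying off $J^{(i+1)}$, receives the image of valuations from outside $U(\vec v_i)$. This is precisely where the finiteness of $f$ and the total-invariance statements of \hyperref[prop:tot_inv]{Proposition~\ref{prop:tot_inv}}, applied simultaneously to all the segments $J^{(i)}$, must be used, and where the bookkeeping of which model $X$ and which subsegment of $J$ to work on becomes delicate; it is also reassuring that in the surviving case $m=2$ the tangent map is conjugate to $z\mapsto c/z^\delta$, whose ramification is exactly $\{p_0,p_1\}$, so the count is tight.
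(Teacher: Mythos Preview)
Your argument is correct, and the total-ramification step you flag as the obstacle does go through: once you know every $\nu\in J^{(i)}$ is totally invariant for $f^m_\bullet$ (Proposition~\ref{prop:tot_inv} applied to the segment $J^{(i)}$ of $f^m$-eigenvaluations), any tangent direction $\vec w$ with $df^m_\bullet\vec w=\vec v_i$ must satisfy $f^m_\bullet\mu\in J^{(i)}$ for $\mu$ close to $\nu_\star$ in direction $\vec w$, and total invariance forces $\mu\in J^{(i)}$, hence $\vec w=\vec v_i$; from this $\phi^{-1}(p_{i+1})=\{p_i\}$ follows immediately.

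The paper takes a different but closely related route. Rather than counting ramification of $\phi$ itself, it observes (exactly as you do) that the tangent directions into $J$ at $\nu_\star$ are totally invariant for $df^m_\bullet$, and then invokes the classical fact about exceptional sets of rational self-maps of $\mathbb{P}^1$ (stated as Proposition~\ref{prop:onedim}): for $R\colon\mathbb{P}^1\to\mathbb{P}^1$ of degree $\geq 2$ one has $\mathscr E_{R^n}=\mathscr E_R$, $\#\mathscr E_R\leq 2$, and every exceptional point satisfies $R^{-2}(z)=\{z\}$. This immediately gives that those tangent directions are $df^2_\bullet$-totally invariant, hence $df^2_\bullet$-fixed, and the M\"obius lemma finishes as in your argument. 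Your Riemann--Hurwitz count is precisely the standard proof of the ``$\#\mathscr E_R\leq 2$'' part of that classical fact, so you have in effect inlined it; what you gain is a self-contained argument that avoids a black box, while the paper's version is shorter and makes the link to one-dimensional dynamics explicit (which also clarifies why the characteristic-zero hypothesis matters, cf.\ Remark~\ref{rem:charp}). Your preliminary reduction to $m=d$ is harmless but not really needed: one can run Riemann--Hurwitz on the $d$ orbit points directly to get $d\leq 2$ and then apply the M\"obius lemma to $f^2_\bullet$.
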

\begin{proof}
We may without loss of generality assume that $J$ contains a divisorial eigenvaluation $\nu_\star$ for $f$, since we have seen in \S3 that the corollary holds when $J$ contains an irrational eigenvaluation for $f$.
Let $V$ be the collection of tangent vectors at $\nu_\star$ in the direction of $J$; the set $V$ consists of either one or two tangent vectors.
Applying \hyperref[prop:tot_inv]{Proposition~\ref*{prop:tot_inv}} to $f^m$, we see that the vectors in $V$ are totally invariant for $df^m_\bullet$ at $\nu_\star$.
Let $\pi\colon X\to \C^2$ be a blowup over $0$ such that $\nu_\star = b_E^{-1}\ord_E$ for some exceptional prime $E$ of $\pi$.
The tangent map $df_\bullet$ at $\nu_\star$ is given by the holomorphic map $f\colon E\to E$.
By \hyperref[prop:tot_inv]{Proposition~\ref*{prop:tot_inv}}, the tangent map $f^m\colon E\to E$ has degree $>1$, and hence $f\colon E\to E$ must have degree $>1$.
To proceed, we will need to appeal to the following well-known result of one dimensional dynamics.

\begin{prop}\label{prop:onedim}
Let $k$ be an algebraically closed field of characteristic $0$ and suppose $R\colon \mathbb{P}^1_k\to \mathbb{P}^1_k$ is a rational map of degree $d\geq 2$.
Let $\ms{E}_R$ be the collection points $z\in \mathbb{P}^1_k$ such that $R^{-n}(z) = \{z\}$ for some $n\geq 1$.
Then $\ms{E}_R = \ms{E}_{R^n}$ for all $n\geq 1$.
Moreover, $\#\ms{E}_R \leq 2$ and every $z\in \ms{E}_R$ satisfies $f^{-2}(z) = \{z\}$.
\end{prop}

\begin{rem}\label{rem:charp}
\hyperref[prop:onedim]{Proposition~\ref*{prop:onedim}} fails when $k$ has characteristic $p>0$, though only slightly.
Indeed, if the proposition does not hold for $R\colon \mathbb{P}^1_k\to \mathbb{P}^1_k$, then necessarily $R$ is conjugate to an iterate of the Frobenius automorphism $z\mapsto z^p$, and $\ms{E}_R$ is countably infinite.
\end{rem}

Since every vector in $V$ is totally invariant for $df_\bullet^m$ and $\mathrm{char}(\C) = 0$, \hyperref[prop:onedim]{Proposition~\ref*{prop:onedim}} shows that every vector in $V$ is in fact totally invariant for $df_\bullet^2$.
Let $\vec{v}\in V$, and let $[\nu_\star,\nu]\subseteq J$ be a segment in the direction of $\vec{v}$.
Since $\vec{v}$ is $df^2_\bullet$-totally invariant, it is in particular $df^2_\bullet$-fixed.
Applying \hyperref[prop:regularity]{Proposition~\ref*{prop:regularity}(3)}, there is a subinterval $I = [\nu_\star, \mu]\subseteq [\nu_\star,\nu]$ such that $f^2_\bullet I\subseteq [\nu_\star,\nu]$ and integers $a,b,c,d\in \N$ for which $\alpha(f_\bullet^2\tau) = (a\alpha(\tau)+b)/(c\alpha(\tau) + d)$ for $\tau\in I$.
\hyperref[lem:mobius]{Lemma~\ref*{lem:mobius}} says precisely that either $f_\bullet^2$ acts as the identity on $I$, or else $\nu_\star$ is an attracting fixed point for $f^2_\bullet$.
The latter possibility case cannot happen, since every point of $I$ is an eigenvaluation for $f^m$.
Thus $I$ consists of eigenvaluations for $f^2$, and we may shrink $J$ to $I$. 
\end{proof}

\begin{prop}\label{prop:segment_basins}
Let $f\colon (\C^2,0)\to (\C^2,0)$ be a dominant superattracting holomorphic germ that does not fall into either situation 1 or 2 of \hyperref[thm:basins]{Theorem~\ref*{thm:basins}}.
Then $f$ falls into situation 3 of \hyperref[thm:basins]{Theorem~\ref*{thm:basins}}.
\end{prop}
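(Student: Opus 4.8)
The plan is to establish situation 3 by a connectedness argument on $\mc{V}_A$ analogous to the ones carried out in \S3, now using the segment $J$ of eigenvaluations for $f^2$ (we may take $m=2$ by the previous proposition) as the target. Define $r\colon \mc{V}\to J$ to be the retraction onto $J$, and let
\[
B = \{\nu\in \mc{V}_A : f^{2n}_\bullet\nu\to r(\nu) \mbox{ in the thinness topology as }n\to\infty\}.
\]
First I would record what is already known: every $\nu\in J$ is totally invariant for $f^2_\bullet$ by \hyperref[prop:tot_inv]{Proposition~\ref*{prop:tot_inv}}, and at the divisorial points of $J$ the tangent map $df^2_\bullet$ has a totally invariant tangent direction pointing along $J$ (again \hyperref[prop:tot_inv]{Proposition~\ref*{prop:tot_inv}} together with \hyperref[prop:onedim]{Proposition~\ref*{prop:onedim}}). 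Since $J\subseteq B$ trivially, $B$ is nonempty.

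\emph{Closedness of $B$.} This is the easy half and mirrors \hyperref[prop:basin_closed]{Proposition~\ref*{prop:basin_closed}}. Suppose $\nu_m\in B$ converges in the thinness metric to $\nu_0\in\mc{V}_A$. The retraction $r$ is $1$-Lipschitz for $d_{1/A}$ (and also for $d_A$), so $r(\nu_m)\to r(\nu_0)$. Fix a divisorial eigenvaluation and use the equicontinuity \hyperref[thm:equicontinuity]{Theorem~\ref*{thm:equicontinuity}}: $d_{1/A}(f^{2n}_\bullet\nu_0, f^{2n}_\bullet\nu_m)\leq \tfrac14 d_A(\nu_0,\nu_m)$ uniformly in $n$. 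Combined with \hyperref[lem:compare]{Lemma~\ref*{lem:compare}} applied near each eigenvaluation in $J$ — where finite thinness holds — one upgrades the $d_{1/A}$-smallness of $d_{1/A}(f^{2n}_\bullet\nu_0, r(\nu_0))$ to $d_A$-smallness, so $\nu_0\in B$. (One needs $\delta$ uniform over $J$; since $J$ is compact and $A$ is bounded on it, $\sup_{\mu\in J}A(\mu)<\infty$, so a single $C,\delta$ work.)

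\emph{Openness of $B$.} This is where the real work lies, and I expect it to be the main obstacle. The argument should combine the three cases of \S3. Near the endpoints of $J$, one is in essentially the situation of an irrational or divisorial eigenvaluation, so one uses \cite[Proposition 5.2(iii)]{favre-jonsson:eigenval} (if an endpoint is irrational) or \hyperref[lem:directed_basins]{Lemma~\ref*{lem:directed_basins}} and the M\"obius/tangent-direction analysis (if an endpoint is divisorial): there is a radius $\rho_0>0$ so that any $\nu\in\mc{V}_A$ lying in a tangent direction at an endpoint of $J$ other than the one pointing into $J$, with $d_A(\nu,\text{endpoint})<\rho_0$, is attracted to that endpoint, hence lies in $B$. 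Away from the endpoints, i.e. in the tangent directions at interior points of $J$ transverse to $J$, one repeats the retraction computation used in the irrational and divisorial cases: let $T$ be the finite subtree where $c(f^2,-)$ or $\nu\mapsto\nu(J_{f^2})$ fails to be locally constant, shrink $J$ so that $J\cap T$ is contained in the interior of $J$, set $U$ to be the weak-open ``flower'' around $J$ determined by the relevant tangent directions at the endpoints, and use \hyperref[prop:regularity]{Proposition~\ref*{prop:regularity}(1)} to get $f^2_\bullet\circ r = r\circ f^2_\bullet$ on $U$ and $c(f^2,\nu)=c(f^2,r\nu)$ there. Then for $\nu\in\mc{V}_A\cap U$,
\[
d_A(f^{2n}_\bullet\nu, r\nu) \leq d_A(f^{2n}_\bullet\nu, rf^{2n}_\bullet\nu) + d_A(rf^{2n}_\bullet\nu, r\nu) = c(f^{2n}, r\nu)^{-1}d_A(\nu, r\nu) + d_A(f^{2n}_\bullet r\nu, r\nu),
\]
and here $rf^{2n}_\bullet\nu = f^{2n}_\bullet r\nu = r\nu$ since $r\nu\in J$ is fixed by $f^2_\bullet$, so the second term vanishes and the first tends to $0$ because $c(f^{2n},r\nu)\to\infty$. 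This shows $\mc{V}_A\cap U\subseteq B$, so $B$ contains an (intrinsic, $d_A$-) open neighborhood of $J$; glued with the endpoint neighborhoods this gives that $B$ is open. The delicate point throughout is bookkeeping the finitely many ``bad'' tangent directions at the endpoints — the ones meeting $T$ or being $df^2_\bullet$-periodic — exactly as in the proof of \hyperref[lem:directed_basins]{Lemma~\ref*{lem:directed_basins}}, ensuring a uniform positive radius. Since $\mc{V}_A$ is connected in the thinness topology and $B$ is nonempty, open, and closed, $B=\mc{V}_A$, which is precisely situation 3, with $\nu_\star = r(\nu)$.
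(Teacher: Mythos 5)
Your overall architecture — a connectedness argument in the thinness topology, defining a basin $B$ and showing it is nonempty, open, and closed — is close in spirit to the paper. Two differences are worth noting, one cosmetic and one substantive.

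The cosmetic one: the paper avoids a shifting target by, for each fixed divisorial $\nu_\star\in I$, running the connectedness argument on the slice $X = \mc{V}_A\smallsetminus\bigcup_{\vec{v}\in V}U(\vec{v})$ (where $V$ is the set of tangent directions at $\nu_\star$ into $I$) with $\nu_\star$ itself as the target; then an arbitrary $\nu\notin I$ has $r(\nu)$ divisorial and lies in the corresponding slice. Your single basin with target $r(\nu)$ can probably be made to work, but the closedness step then involves a moving target and needs the uniformity-over-$J$ care you sketch, whereas the paper's version cites \hyperref[prop:basin_closed]{Proposition~\ref*{prop:basin_closed}} directly.

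The substantive issue is a genuine gap in your openness argument. You propose to reuse \hyperref[lem:directed_basins]{Lemma~\ref*{lem:directed_basins}} ``exactly as in its proof'' for the transverse tangent directions at a divisorial $\nu_\star\in I$. But the proof of that lemma hinges on the standing hypothesis that we are \emph{not} in case 2 of \hyperref[prop:divisorial]{Proposition~\ref*{prop:divisorial}}, i.e.\ that no segment through $\nu_\star$ exists on which $f^m_\bullet$ acts as the identity. That hypothesis is exactly what could fail here: a priori there might be a $df^2_\bullet$-periodic tangent direction $\vec{v}\notin V$ in which $f^{2m}_\bullet$ is the identity on some subsegment, and then the M\"{o}bius derivative at $\nu_\star$ in direction $\vec{v}$ equals $1$ and the whole contraction estimate collapses. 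The paper resolves this with a new argument (its Lemma~\ref{lem:directed_basins_2}): \emph{first} take $I$ maximal among segments of $f^2_\bullet$-fixed points; if such a transverse segment $J'$ existed, then by \hyperref[prop:tot_inv]{Proposition~\ref*{prop:tot_inv}} both $\vec{v}$ and $V$ would be $df^2_\bullet$-totally invariant, but since the tangent map has degree $>1$, \hyperref[prop:onedim]{Proposition~\ref*{prop:onedim}} caps the number of totally invariant directions at two, forcing $V$ to be a single vector, i.e.\ $\nu_\star$ an endpoint of $I$; but then $I\cup J'$ would be a strictly larger segment of $f^2_\bullet$-fixed points, contradicting maximality. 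Your proposal never invokes maximality of the segment nor \hyperref[prop:onedim]{Proposition~\ref*{prop:onedim}} in the openness step, so it silently assumes away the one new obstruction that distinguishes the segment case from the single-divisorial-eigenvaluation case.
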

\begin{proof}
Let $I\subset \mc{V}$ be any maximal (under inclusion) segment of fixed points of $f_\bullet^2$.
We have seen that such a segment exists.
We must show that for every $\nu\in \mc{V}_A$ there is a $\nu_\star\in I$ such that $f^{2n}_\bullet\nu\to \nu_\star$ in the thinness topology.
We will need the following lemma, similar to \hyperref[lem:directed_basins]{Lemma~\ref*{lem:directed_basins}}.

\begin{lem}\label{lem:directed_basins_2}
Let $\nu_\star\in I$ be divisorial, and let $V$ be the set of tangent vectors at $\nu_\star$ in the direction of $I$.
Then there is a function $\rho$ defined on tangent vectors $\vec{v}\notin V$ at $\nu_\star$ with values in $(0,+\infty]$ such that
\begin{enumerate}
\item[$1.$] if $\nu\in \mc{V}_A$ lies in the direction $\vec{v}\notin V$ from $\nu_\star$ and $d_A(\nu, \nu_\star)<\rho(\vec{v})$, then $f^{2n}_\bullet\nu\to \nu_\star$ in the thinness topology as $n\to \infty$, and
\item[$2.$] one has $\inf_{\vec{v}\notin V}\rho(\vec{v})>0$.
\end{enumerate}
\end{lem}
\begin{proof}
To ease notation, let $g = f^2$, so that $I$ is a set of fixed points of $g_\bullet$.
This proof is essentially the same as the proof of \hyperref[lem:directed_basins]{Lemma~\ref*{lem:directed_basins}}, except one must first prove the following: if $\vec{v}\notin V$ is a $dg_\bullet$-periodic tangent direction at $\nu_\star$, say with period $m$, then there is no segment $J = [\nu_\star, \nu]$ in the direction of $\vec{v}$ such that $g^{m}_\bullet$ acts as the identity on $J$.
Suppose for contradiction that such a $\vec{v}$ and $J$ did exist.
Up to shrinking $J$, we have seen that we may assume $m = 2$.
\hyperref[prop:tot_inv]{Proposition~\ref*{prop:tot_inv}} tells us that $\vec{v}$ is then $dg^2_\bullet$-totally invariant, as is the set $V$.
But $dg_\bullet$ has degree $>1$, so by \hyperref[prop:onedim]{Proposition~\ref*{prop:onedim}} there can be at most two $dg_\bullet^2$-totally invariant tangent vectors at $\nu_\star$.
It follows that $V$ consists of a single tangent vector, or in other words, that $\nu_\star$ is an end of $I$.
But then $I\cup J$ is a segment of eigenvaluations for $g$ strictly containing $I$, a contradiction of the maximality of $I$.
We conclude that no such $\vec{v}$ and $J$ can exist.
The proof now proceeds exactly as in \hyperref[lem:directed_basins]{Lemma~\ref*{lem:directed_basins}}.
\end{proof}

\begin{cor}\label{cor:segment_basin}
Let $\nu_\star$ be a divisorial valuation in $I$, and let $V$ be the set of tangent vectors at $\nu_\star$ in the direction of $I$.
Let $X = \mc{V}_A\smallsetminus \bigcup_{\vec{v}\in V} U(\vec{v})$.
Then $f^{2n}_\bullet\nu\to \nu_\star$ in the thinness topology for all $\nu\in X$.
\end{cor}
\begin{proof}
Let
\[
B = \{\nu\in X : f^{2n}_\bullet\nu\to \nu_\star \mbox{ in the thinness topology as }n\to \infty\}.
\]
Since $X$ is a closed set, \hyperref[prop:basin_closed]{Proposition~\ref*{prop:basin_closed}} implies that $B$ is closed.
Let $\rho$ be as in \hyperref[lem:directed_basins_2]{Lemma~\ref*{lem:directed_basins_2}} and let $r = \inf_{\vec{v}\notin V} \rho(\vec{v})$.
Then the open ball $B_A(\nu_\star, r)\cap X$ is contained in $B$, proving that $B$ is open as well.
Since $X$ is connected, we conclude $B = X$.
\end{proof}

We now continue with the proof of \hyperref[prop:segment_basins]{Proposition~\ref*{prop:segment_basins}}.
Let $\nu\in \mc{V}_A$.
If $\nu\in I$, then $f^{2n}_\bullet \nu\to \nu$ trivially as $n\to \infty$.
Assume then that $\nu\notin I$.
Let $\nu_\star$ be the closest point of $I$ to $\nu$, that is, $\nu_\star$ is the image of $\nu$ under retraction $r\colon \mc{V}\to I$.
Then $\nu_\star$ is divisorial, and $\nu$ does not lie in the same direction from $\nu_\star$ as $I$.
Thus \hyperref[cor:segment_basin]{Corollary~\ref*{cor:segment_basin}} gives that $f^{2n}_\bullet\nu\to \nu_\star$, completing the proof.
\end{proof}

\section{Proof of Theorem B}

In this section, we use the results from \S3 and \S4 to prove \hyperref[thm:weakstability]{Theorem~\ref*{thm:weakstability}}, the geometric counterpart to \hyperref[thm:recursionrelation]{Theorem~\ref*{thm:recursionrelation}}.
For the entirety of this section, we will assume that $f\colon (\C^2,0)\to(\C^2,0)$ is a \emph{finite} superattracting germ, or in other words that $f$ has no contracted curves.
We will consider modifications $\pi\colon X\to (\C^2,0)$ of the following simple form: first, let $\eta\colon Y\to (\C^2,0)$ be a composition of point blowups over the origin; then, let $\pi\colon X\to (\C^2,0)$ be the modification obtained by contracting some chains of exceptional prime divisors of $Y$ with self-intersection $\leq -2$.
Any $X$ obtained in this way has at worst Hirzebruch-Jung quotient singularities, and dominates the blowup of the origin in $\C^2$.
In particular all Weil divisors are $\Q$-Cartier (see, for example, \cite{artalbartolo-martinmorales-ortigasgalindo:cartierweilonquotientsingularities}).
In this section, the term \emph{modification} will refer specifically to a modification of this type.

Let $\pi\colon X\to (\C^2,0)$ be a modification, and let $\Div_\Q(\pi)$ denote the $\Q$-vector space generated by the irreducible components of $\pi^{-1}(0)$.
The germ $f$ induces a $\Z$-linear pullback map $f^*\colon \Div_\Q(\pi)\to \Div_\Q(\pi)$ in the following way.
First, let $\mu\colon X'\to X$ be any bimeromorphic model of $(\C^2,0)$ dominating $X$ such that $f$ lifts to a holomorphic map $\tilde{f}\colon X'\to X$.
Since the elements of $\Div_\Q(\pi)$ are $\Q$-Cartier, we obtain a $\Z$-linear pullback map $\tilde{f}^*\colon \Div_\Q(\pi)\to \Div_\Q(\pi\circ\mu)$.
The pullback $f^*\colon \Div_\Q(\pi)\to \Div_\Q(\pi)$ is defined as the composition $\mu_*\circ \tilde{f}^*$.
It does not depend on the choice of $\mu$, and hence is well-defined.
Similarly, one has pullbacks $(f^n)^*$ for all $n\geq 1$.

The germ $f$ induces a holomorphic map $f\colon \pi^{-1}(0)\to \pi^{-1}(0)$, so it makes sense to refer to the image $f(E)\subseteq \pi^{-1}(0)$ of an exceptional prime divisor $E\subseteq\pi^{-1}(0)$, even in the case when $E$ contains indeterminacy points for the meromorphic map $f\colon X\dashrightarrow X$.
The image $f(E)$ is either an exceptional prime divisor of $X$, or else a point in $\pi^{-1}(0)$.
Analogous to \emph{algebraic stability} for (global) complex dynamical systems, there is a characterization of when the pullbacks $(f^n)^*$ behave functorially in terms of the images $f^n(E)$ of exceptional prime divisors $E\subseteq \pi^{-1}(0)$.

\begin{lem}
Suppose $f,g\colon (\C^2,0)\to (\C^2,0)$ are two finite superattracting holomorphic germs, and let $\pi\colon X\to (\C^2,0)$ be a modification.
Suppose that no exceptional prime divisor of $\pi$ is contracted by $f$ to an indeterminacy point of $g\colon X\dashrightarrow X$.
Then $(g\circ f)^* = f^*\circ g^*$ on $\Div_\Q(\pi)$.
\end{lem}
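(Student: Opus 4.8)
The plan is to fix an exceptional prime $E$ of $\pi$ --- it suffices to check the identity on these, since both sides are $\Z$-linear --- and to produce a single bimeromorphic model of $(\C^2,0)$ on which $f$, $g$ and $g\circ f$ all lift holomorphically in a compatible way. Concretely, first choose a modification $\sigma\colon X_1\to X$, which may be taken smooth, resolving the indeterminacy of $g\colon X\dashrightarrow X$, so that $g$ lifts to a holomorphic $\tilde g\colon X_1\to X$ with $\tilde g = g\circ\sigma$ as meromorphic maps. Then resolve the meromorphic map $\sigma^{-1}\circ f\colon X\dashrightarrow X_1$; since $f$ is finite and every $\sigma$-exceptional prime is contracted under $\sigma$ to a point of $\mathrm{Ind}(g)\subseteq\pi^{-1}(0)$, the indeterminacy locus of $\sigma^{-1}\circ f$ lies in $\pi^{-1}(0)$, so this resolution is realized by a modification $p\colon Z\to X$ together with a holomorphic $\phi\colon Z\to X_1$ satisfying $\sigma\circ\phi = f\circ p$. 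Now $\psi := \sigma\circ\phi\colon Z\to X$ is a holomorphic lift of $f$, while $F := \tilde g\circ\phi\colon Z\to X$ is a holomorphic lift of $g\circ f$ (because $\tilde g\circ\sigma^{-1} = g$). Using the functoriality of pullback of $\Q$-Cartier divisors under holomorphic maps, together with the definition of the pullback maps,
\[(g\circ f)^*E = p_*F^*E = p_*\phi^*\tilde g^*E, \qquad f^*(g^*E) = p_*\psi^*(\sigma_*\tilde g^*E) = p_*\phi^*(\sigma^*\sigma_*\tilde g^*E).\]

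The $\Q$-divisor $\Theta := \sigma^*\sigma_*\tilde g^*E - \tilde g^*E$ on $X_1$ is $\sigma$-exceptional, since $\sigma_*\Theta = 0$ by the projection formula. Hence the two displayed quantities differ by $-p_*\phi^*\Theta$, and it remains to show that $\phi^*C$ is $p$-exceptional for every $\sigma$-exceptional prime $C\subseteq X_1$. Set $q := \sigma(C)$, so that $q\in\mathrm{Ind}(g)\subseteq\pi^{-1}(0)$. Since $\sigma\circ\phi = \psi$, the set $\phi^{-1}(C)$ is contained in $\psi^{-1}(q)$; and since $q$ lies over the origin and $f$ is finite, every point of $\psi^{-1}(q)$ lies over the origin, i.e.\ $\psi^{-1}(q)\subseteq(\pi\circ p)^{-1}(0)$. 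Thus any component $\Gamma$ of $\phi^{-1}(C)$ that is not contracted by $p$ must be the strict transform under $p$ of some exceptional prime $E'$ of $\pi$. But then $p(\Gamma) = E'$ and $\psi(\Gamma)\subseteq\sigma(C) = \{q\}$, so --- using that $f$ induces a genuine holomorphic self-map of $\pi^{-1}(0)$ (see the discussion preceding the lemma) and that $\psi$ is a holomorphic lift of $f$ --- we obtain $f(E') = \psi(\Gamma) = \{q\}$; that is, $f$ contracts $E'$ to a point of $\mathrm{Ind}(g)$, contradicting the hypothesis. Therefore every component of $\phi^{-1}(C)$ is $p$-exceptional, hence $\phi^*\Theta$ is $p$-exceptional and $p_*\phi^*\Theta = 0$, completing the proof.

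The reductions to exceptional primes and to holomorphic lifts (on which pullback is strictly functorial) are routine; the delicate step is the identification, in the last paragraph, of the non-$p$-exceptional components of $\phi^{-1}(C)$ with strict transforms of exceptional primes of $\pi$, together with the recognition of $\psi$ restricted to such a component as the induced self-map of $\pi^{-1}(0)$. This is precisely where finiteness of $f$ --- ensuring that preimages of points of $\pi^{-1}(0)$ remain over the origin --- and the hypothesis that no exceptional prime is contracted by $f$ into $\mathrm{Ind}(g)$ are indispensable, and where one must take care that ``$f(E')$'' refers to the well-defined point provided by the map $f\colon\pi^{-1}(0)\to\pi^{-1}(0)$ rather than to an ill-defined image under the meromorphic map.
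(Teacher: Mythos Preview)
Your proof is correct and follows essentially the same strategy as the paper's: build a tower $Z\to X_1\to X$ on which $g$ and $g\circ f$ lift holomorphically, express both $(g\circ f)^*E$ and $f^*g^*E$ as $p_*\phi^*$ applied respectively to $\tilde g^*E$ and $\sigma^*\sigma_*\tilde g^*E$, and then show the difference is killed by $p_*$ because it is supported on the $\sigma$-exceptional locus. The paper phrases the last step as a coefficient comparison (the coefficients of $D$ and $\mu^*\mu_*D$ agree along primes not contracted by $\mu$), whereas you phrase it contrapositively (every component of $\phi^{-1}(C)$ for $C$ $\sigma$-exceptional is $p$-exceptional); these are equivalent. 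One small point worth making explicit: your argument uses that every $\sigma$-exceptional prime lies over $\mathrm{Ind}(g)$, which requires choosing $\sigma$ to be an isomorphism away from $\mathrm{Ind}(g)$; the paper states this reduction explicitly, and you should too.
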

\begin{proof}
Let $\mu\colon Y\to X$ be a modification of $X$ such that $g$ lifts to a holomorphic map $\tilde{g}\colon Y\to X$.
We may assume with no loss of generality that $\mu$ is an isomorphism over any non-indeterminacy point of $g\colon X\dashrightarrow X$.
Similarly, let $\eta\colon Z\to X$ be a modification such that $f$ lifts to a holomorphic map $\tilde{f}\colon Z\to Y$.
By definition, $(g\circ f)^* = \eta_*\tilde{f}^*\tilde{g}^*$ and $f^*\circ g^* = \eta_*\tilde{f}^*\mu^*\mu_*\tilde{g}^*$.
The key observation is that, if $D\in \Div_\Q(\pi\circ \mu)$, then $\mu^*\mu_*D$ has the same coefficients as $D$ along any exceptional prime of $Y$ that is not contracted to a point via $\mu$.
Suppose now that $F$ is an exceptional prime of $Z$ that is not contracted to a point via $\eta$.
Our assumption that $f(\eta(F))$ is not an indeterminacy point of $g$ and our assumption that $\mu$ is an isomorphism over non-indeterminacy points of $g$ imply that $\tilde{f}(F)$ is either (1) an exceptional prime of $Y$ that is not contracted to a point via $\mu$, or (2) a point in $Y$ contained only within exceptional primes of $Y$ which are not contracted to a point via $\mu$.
In particular, the coefficient of $\eta(F)$ in $(g\circ f)^*E$ for any exceptional prime $E$ of $X$ is completely determined by the coefficients of $\tilde{g}^*E$ along exceptional primes of $Y$ that are not contracted to a point via $\mu$; but the coefficients of $\tilde{g}^*E$ along these primes are the same as the coefficients of $\mu^*\mu_*\tilde{g}^*E$ along these primes.
Thus $\eta_*\tilde{f}^*\tilde{g}^* = \eta_*\tilde{f}^*\mu^*\mu_*\tilde{g}^*$.
\end{proof}

Using this lemma, \hyperref[thm:weakstability]{Theorem~\ref*{thm:weakstability}} is implied by the following theorem.

\begin{thm}\label{thm:weakstability2}
Let $f\colon (\C^2,0)\to (\C^2,0)$ be a finite superattracting holomorphic germ.
Then there is a modification $\pi\colon X\to (\C^2,0)$ dominating the blowup of the origin in $\C^2$ such that for every exceptional prime divisor $E\subseteq \pi^{-1}(0)$, one has that $f^n(E)$ is not an indeterminacy point of $f\colon X\dashrightarrow X$ for sufficiently large $n$.
If one replaces $f$ by $f^2$, then $X$ can be taken to be smooth.
\end{thm}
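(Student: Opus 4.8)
The plan is to recast the statement as a question about the dynamics of $f_\bullet$ on the valuative tree and then to feed in the basin results of \S3 and \S4. Identify each exceptional prime $E$ of a modification $\pi\colon X\to(\C^2,0)$ with its divisorial valuation $\nu_E=b_E^{-1}\ord_E\in\mc{V}$, and write $\Gamma_X$ for the dual tree of $X$, that is, the finite subtree of $\mc{V}$ spanned by $\ord_0$ and the $\nu_E$. The first step is to record the dictionary relating the meromorphic map $f\colon X\dashrightarrow X$ to $f_\bullet$. For every $n$ the forward image $f^n(E)\subseteq\pi^{-1}(0)$ is the center in $X$ of $f^n_\bullet\nu_E=(f_\bullet)^n\nu_E$; in particular, once $f^n(E)$ is a point it stays a point, and $E$ is contracted by $f$ exactly when $f_\bullet\nu_E$ is not the valuation of an exceptional prime of $X$. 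Dually, a point $q\in\pi^{-1}(0)$ is an indeterminacy point of $f\colon X\dashrightarrow X$ exactly when $f_\bullet$ carries some divisorial valuation whose center in $X$ is $q$ onto the valuation of an exceptional prime of $X$. Granting these facts, and using that the indeterminacy locus of $f\colon X\dashrightarrow X$ is finite, \hyperref[thm:weakstability2]{Theorem~\ref*{thm:weakstability2}} amounts to: choose $X$ so that for every exceptional prime $E$, the centers in $X$ of the valuations $f^n_\bullet\nu_E$, $n\gg0$, are never indeterminacy points of $f$.

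The second step localizes near the attractor furnished by \hyperref[thm:basins]{Theorem~\ref*{thm:basins}}. There we are in one of three situations, governed by a unique eigenvaluation $\nu_\star$ (situations 1 and 2) or by a segment $I$ of eigenvaluations for $f^2$ (situation 3); in the latter case we replace $f$ by $f^2$ at the outset. In all cases \hyperref[thm:basins]{Theorem~\ref*{thm:basins}} gives $f^n_\bullet\nu\to\nu_\star$ (resp.\ $f^n_\bullet\nu$ converges into $I$) for every $\nu\in\mc{V}_A$. Let $T\subseteq\mc{V}_A$ be the finite subtree outside of which both $c(f,-)$ and $\nu\mapsto\nu(J_f)$ are locally constant; off $T$ the map $f_\bullet$ is order preserving on the components of $\mc{V}\smallsetminus T$ (\hyperref[prop:regularity]{Proposition~\ref*{prop:regularity}(1)}) and scales thinness by a locally constant factor, so its behaviour there is transparent. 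Choose a weak neighbourhood $W$ of $\nu_\star$ (resp.\ of $I$) with $f_\bullet W\subseteq W$ on which the tangent dynamics is entirely understood: at a divisorial valuation of $W$ the tangent map $df_\bullet$ is a rational self-map of the corresponding exceptional curve, and \hyperref[prop:regularity]{Proposition~\ref*{prop:regularity}(3)}, \hyperref[lem:mobius]{Lemma~\ref*{lem:mobius}}, \hyperref[prop:tot_inv]{Proposition~\ref*{prop:tot_inv}} and \hyperref[prop:onedim]{Proposition~\ref*{prop:onedim}} show that an orbit which has entered $W$ marches monotonically, along controlled tangent directions, toward $\nu_\star$. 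The consequence is that once $X$ has been enlarged so that $\Gamma_X$ contains $T$ together with the relevant neighbourhood data of the attractor (the eigenvaluation $\nu_\star$ when it is divisorial, and otherwise divisorial valuations straddling $\nu_\star$, resp.\ a subsegment of $I$, adapted to the arguments of \S3 and \S4), every exceptional prime $E$ with $\nu_E\in W$ has all the centers of its forward orbit lying on divisors already in $X$, or converging to a single center of $\nu_\star$ in $X$ which, by $f_\bullet$-invariance of $\nu_\star$, is not an indeterminacy point.

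The third step builds $X$. Start with any modification $X_0$, dominating the blowup of the origin in $\C^2$, whose dual tree contains $T$ and the neighbourhood data above. For each of the finitely many exceptional primes $E$ of $X_0$ whose forward orbit has not yet entered $W$, adjoin to the dual tree the finite initial segment of the $f_\bullet$-orbit of $\nu_E$ up to its first entry into $W$, together with the finitely many divisorial valuations immediately beyond $T$ in the tangent directions at the orbit points that meet $T$ (needed to keep $f_\bullet$ order preserving near those points). Because outside $T\cup W$ the dynamics of $f_\bullet$ is transparent --- every orbit reaches $W$ after a controlled number of steps --- this adjunction is finite; it produces a modification $X$ of the required type, dominating the blowup of the origin and with at worst Hirzebruch-Jung singularities, for which every exceptional prime has its orbit enter $W$ in finitely many steps and then, by the tangent analysis, never again hit an indeterminacy point. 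This is \hyperref[thm:weakstability2]{Theorem~\ref*{thm:weakstability2}}. For the final clause there is nothing extra to do in situations 1 and 2; in situation 3, where we have already passed to $f^2$, the tangent directions along $I$ at its divisorial points are $df_\bullet$-totally invariant by \hyperref[prop:tot_inv]{Proposition~\ref*{prop:tot_inv}} and \hyperref[prop:onedim]{Proposition~\ref*{prop:onedim}}, and this total ramification of the curve maps $E\to E$ is exactly the combinatorial condition under which the exceptional curves of the adapted model can be realized with self-intersections $\geq -1$, so that no chain of $(-2)$-curves need be contracted and $X$ may be taken smooth.

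The step I expect to be the main obstacle is the finiteness in the third step. One must show not merely that finitely many exceptional primes have ``transient'' orbits, but --- more delicately --- that adjoining their truncated orbits (and the auxiliary valuations near $T$) does not spawn \emph{new} exceptional primes with bad orbits, so that the enlargement stabilizes after finitely many rounds. This is precisely where the global, rather than merely local, attraction in \hyperref[thm:basins]{Theorem~\ref*{thm:basins}} --- proved via the equicontinuity \hyperref[thm:equicontinuity]{Theorem~\ref*{thm:equicontinuity}} --- is indispensable: it confines all of the interesting dynamics to the finite tree $T$ together with a single neighbourhood $W$ of the attractor, and the transparency of $f_\bullet$ everywhere else forces the construction to close up. A secondary, purely technical, point is making the valuative dictionary of the first step precise over modifications carrying Hirzebruch-Jung singularities, where one works throughout with $\Q$-Cartier divisors.
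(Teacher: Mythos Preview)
Your outline correctly identifies the dictionary between $f\colon X\dashrightarrow X$ and $f_\bullet$ (this is \hyperref[lem:indet]{Lemma~\ref*{lem:indet}}) and correctly invokes \hyperref[thm:basins]{Theorem~\ref*{thm:basins}} to bring every orbit into a neighbourhood of the attractor. But the proposal has a genuine gap in the divisorial and segment cases, and the argument for smoothness is incorrect.

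\textbf{The main gap.} Suppose $\nu_\star$ is divisorial, with $\nu_\star=\nu_{E_\star}$ for an exceptional prime $E_\star\subset X$. Then for a general exceptional prime $E$, the center of $f^n_\bullet\nu_E$ in $X$ is eventually a \emph{closed point} of $E_\star$, and these points constitute an orbit of the holomorphic map $f|_{E_\star}\colon E_\star\to E_\star$. Your step~2 asserts that such centers are ``not an indeterminacy point'' because $\nu_\star$ is $f_\bullet$-invariant; this is false. Invariance of $\nu_\star$ only says $f(E_\star)=E_\star$; it says nothing about whether the finitely many indeterminacy points of $f$ that lie on $E_\star$ are $f|_{E_\star}$-periodic. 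If some indeterminacy point $p\in E_\star$ is periodic, then any $E$ whose tangent orbit at $\nu_\star$ falls into that cycle will have $f^n(E)$ hitting an indeterminacy point infinitely often. Your construction in step~3 (adjoining finite initial orbit segments) cannot repair this, because the bad behaviour is asymptotic, not transient. The paper resolves this with a separate, non-obvious construction (\hyperref[lem:good_blowup]{Lemma~\ref*{lem:good_blowup}}): for each periodic indeterminacy point $p\in E_\star$ one performs a carefully tuned number $m_i$ of blowups at each point $p_i$ of the cycle, producing intervals $J_i=[\nu_{E_\star},\nu_{E_i}]$ whose skewness-lengths $L_i$ satisfy $s_iL_i<(1-\eps)L_{i+1}$, forcing $f_\bullet J_i\subset J_{i+1}$ and hence, via \hyperref[lem:indet]{Lemma~\ref*{lem:indet}}, holomorphy of $f$ at $p_i$. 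This uses \hyperref[lem:mobius]{Lemma~\ref*{lem:mobius}} to guarantee the product $s_0\cdots s_{n-1}<1$, which is exactly where the hypothesis ``$U(p)$ contains no segment of eigenvaluations for $f^2$'' enters. None of this is in your proposal.

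\textbf{The smoothness clause.} Your explanation (total invariance of tangent directions forces self-intersections $\geq -1$) is not the reason. The singularities arise only in the paper's Case~4: there is a segment $I$ of $f^2_\bullet$-fixed points on which $f_\bullet$ acts nontrivially, so for an exceptional prime $E$ with $\nu_E\in I$ one must also realize $E':=f(E)$ as an exceptional prime; the divisorial valuations lying in the open interval $(\nu_E,\nu_{E'})\subset I$ then correspond to a chain of primes that must be contracted to make the intersection points $E_i\cap E_{i+1}$ holomorphic for $f$, and this contraction produces Hirzebruch--Jung singularities. Replacing $f$ by $f^2$ reduces Case~4 to Case~3 (a segment of genuine eigenvaluations), where no such contraction is needed.
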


Before being able to prove this theorem, we need one additional notion, namely the \emph{center} of a valuation $\nu\in \mc{V}$ in $X$.
For any $\nu \in \mc{V}$, there is a (possibly non-closed) associated point $p\in \pi^{-1}(0)$, called the \emph{center} of $\nu$ in $X$.
Without going into details, the center can be described as follows.
If $\nu = \nu_E$ is a divisorial valuation associated to an exceptional prime divisor $E\subseteq \pi^{-1}(0)$, then the center of $\nu$ in $X$ is the generic point of $E$.
If $p\in \pi^{-1}(0)$ is a closed point contained in a unique exceptional prime $E$, then the $\nu\in \mc{V}$ with center $p$ are exactly those $\nu$ lying in the tangent direction at $\nu_E$ defined by $p$.
Finally, if $p\in \pi^{-1}(0)$ is a closed point contained in the intersection of some set of exceptional primes $E_i$, then the $\nu\in \mc{V}$ with center $p$ are exactly those $\nu$ that lie in the intersection of the tangent directions of the $\nu_{E_i}$ defined by $p$.
If $p\in \pi^{-1}(0)$ is a closed point, we will denote by $U(p)$ the (weak open) set of $\nu\in \mc{V}$ with center $p$.
The reason for introducing the center is so we can use the following valuative criterion for testing whether a point $p\in \pi^{-1}(0)$ is an indeterminacy point for $f\colon X\dashrightarrow X$.

\begin{lem}\label{lem:indet}
The meromorphic map $f\colon X\dashrightarrow X$ is holomorphic at a closed point $p\in \pi^{-1}(0)$ if and only if there is a closed point $q\in \pi^{-1}(0)$ such that $f_\bullet U(p)\subseteq U(q)$.
In this case, $f(p) = q$.
\end{lem}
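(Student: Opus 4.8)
The plan is to reduce the statement to a question about the graph of $f$. Let $\Gamma\subseteq X\times X$ be the closure of the graph of the meromorphic map $f\colon X\dashrightarrow X$, with projections $p_1,p_2\colon\Gamma\to X$; then $p_1$ is proper and bimeromorphic, while $\pi\circ p_2=f\circ\pi\circ p_1$. Since $X$ is normal, $f$ is holomorphic at the closed point $p$ if and only if the fiber $p_1^{-1}(p)$ consists of a single point — otherwise $p_1^{-1}(p)$ is a connected curve — and in the holomorphic case $p_1^{-1}(p)=\{(p,f(p))\}$. The elementary but crucial observation is that $p_1^{-1}(p)$ is contained in $\{p\}\times X$, on which $p_2$ restricts to a bijection onto $X$; thus $p_2$ is injective on $p_1^{-1}(p)$, and consequently $p_1^{-1}(p)$ is a single point exactly when its image $p_2(p_1^{-1}(p))$ is.

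Next I would set up the valuative dictionary. For $\nu\in\mc{V}$, transporting $\nu$ to $\Gamma$ along the function-field isomorphism induced by $p_1$ and then pushing forward along $p_2$ yields exactly $f_*\nu$; this is just the identity $\phi\circ f=(p_1^*)^{-1}p_2^*\phi$ on rational functions, valid because $\Gamma$ is the graph. Since centers are functorial under morphisms, it follows that the center in $X$ of $f_*\nu$ — and hence of $f_\bullet\nu=c(f,\nu)^{-1}f_*\nu$, which differs only by a positive rescaling — equals $p_2(\mathfrak c)$, where $\mathfrak c\in\Gamma$ is the center of $\nu$ on $\Gamma$; moreover $\mathfrak c$ lies over the center of $\nu$ in $X$, so $\mathfrak c\in p_1^{-1}(p)$ whenever $\nu\in U(p)$. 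Here the finiteness of $f$ is what guarantees that $f_\bullet\nu$ is genuinely defined for every $\nu$.

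With this in hand the two implications are short. If $f$ is holomorphic at $p$, then $p_1^{-1}(p)=\{(p,f(p))\}$, so every $\nu\in U(p)$ has center $(p,f(p))$ on $\Gamma$ and hence $f_\bullet\nu$ has center $f(p)$ in $X$; thus $f_\bullet U(p)\subseteq U(f(p))$, with $q=f(p)$. Conversely, assume $f_\bullet U(p)\subseteq U(q)$ for some closed point $q$, and suppose for contradiction that $f$ is not holomorphic at $p$. Then $p_1^{-1}(p)$ contains an irreducible curve $C_0$; by the relation $\pi\circ p_2=f\circ\pi\circ p_1$ its image $F_0:=p_2(C_0)$ lies in $\pi^{-1}(0)$, and since $p_2$ is injective on $p_1^{-1}(p)$, $F_0$ is an exceptional prime of $X$. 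Choosing any $\nu\in\mc{V}$ whose center on $\Gamma$ is the generic point of $C_0$ (such $\nu$ exist, since every point of $\Gamma$ over $0$ is the center of some valuation in $\mc{V}$), we get $\nu\in U(p)$, while $f_\bullet\nu$ has center the generic point of $F_0$, which is not a closed point; hence $f_\bullet\nu\notin U(q)$, a contradiction. Therefore $f$ is holomorphic at $p$; applying the first implication and using that distinct $U(q')$ are disjoint while $U(p)\neq\emptyset$, we conclude $q=f(p)$.

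The only genuinely non-formal point — which I expect to be the main obstacle — is the equivalence ``$f$ holomorphic at $p$ $\iff$ $p_2(p_1^{-1}(p))$ is a single point''. One must resist replacing $\Gamma$ by an arbitrary resolution $\mu\colon X'\to X$ of the indeterminacy of $f$: such an $X'$ may over-blow-up $p$, so that $\mu^{-1}(p)$ is a curve even when $f$ is holomorphic at $p$. Working with the genuine graph $\Gamma$ — equivalently, exploiting the injectivity of $p_2$ on $\{p\}\times X$ — is exactly what rules this out. The remaining ingredients (that the center of a pushforward is the image of the center, and that a proper bimeromorphic morphism onto a normal surface is an isomorphism precisely over the points with one-point fibers) are standard.
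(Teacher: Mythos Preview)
Your argument is correct. The paper does not give a self-contained proof here: it simply cites Proposition~3.2 of Favre--Jonsson's \emph{Eigenvaluations} paper for the case where $\pi$ is a composition of point blowups, and remarks that the same proof goes through because $X$ is normal. Your graph-based argument is precisely the kind of proof one expects behind that citation. The equivalence ``$f$ holomorphic at $p \Longleftrightarrow p_1^{-1}(p)$ is a single point'' is Zariski's main theorem applied to the proper bimeromorphic $p_1$ onto the normal surface $X$; your identification $(p_2)_*\nu = f_*\nu$ via $p_2^*\phi = \phi\circ f$ and the functoriality of centers under $p_1,p_2$ then give the valuative translation cleanly. The subtlety you single out --- working with the genuine graph $\Gamma$ rather than an arbitrary resolution of indeterminacy, so that $p_2$ is injective on each $p_1$-fiber and hence $p_2(C_0)$ is forced to be an exceptional prime --- is indeed the one non-formal step, and you treat it correctly.
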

\begin{proof}
This is proven in \cite[Proposition 3.2]{favre-jonsson:eigenval} when $\pi\colon X\to (\C^2,0)$ is a composition of point blowups.
However, the proof is equally valid for the slightly more general modifications $\pi\colon X\to (\C^2,0)$ we are considering here, as the surfaces $X$ are normal.
\end{proof}

We are now ready to prove \hyperref[thm:weakstability2]{Theorem~\ref*{thm:weakstability2}}, and hence \hyperref[thm:weakstability]{Theorem~\ref*{thm:weakstability}}.

\begin{proof}[Proof of {\hyperref[thm:weakstability2]{Theorem~\ref*{thm:weakstability2}}}]
We will split the proof of the theorem into four cases, depending on the nature of the eigenvaluations of $f$.

\smallskip
\textbf{Case 1:}
First assume that $f$ has a unique, globally attracting eigenvaluation $\nu_\star$ that is either an end or an irrational valuation.
By \cite[Theorem 5.1 and Proposition 5.2]{favre-jonsson:eigenval}, there exists some blowup $\pi\colon X\to (\C^2,0)$ over $0$ and a closed point $p\in \pi^{-1}(0)$ such that $f$ is holomorphic at $p$, $f(p) = p$, and $U(p)$ is a neighborhood of $\nu_\star$.
If $E\subseteq \pi^{-1}(0)$ is an exceptional prime, then by \hyperref[thm:basins]{Theorem~\ref*{thm:basins}} the divisorial valuation $\nu_E$ lies in the basin of attraction of $\nu_\star$, and hence in particular $f^n_\bullet\nu_E\in U(p)$ for $n$ large enough.
If $\xi$ denotes the generic point of $E$, it follows that
\[
f^n(\xi) = f^n(\Center_X(\nu_E)) = \Center_X(f^n_\bullet \nu_E) = p
\]
for $n$ large enough.
Since $p$ is not an indeterminacy point of $f$, the theorem holds for the blowup $\pi\colon X\to (\C^2,0)$.
This completes the proof in case 1.

In each of the next three cases, we will use the following lemma.

\begin{lem}\label{lem:good_blowup}
Suppose that $\eta\colon Y\to (\C^2,0)$ is a blowup over the origin containing two exceptional primes $E$ and $F$ such that $f(E) = F$ and $f(F) = E$, where here we allow the possibility that $E = F$.
Then there is a blowup $\pi\colon X\to (\C^2,0)$ over the origin dominating $\eta$ such that $f$ is holomorphic at every periodic point $p\in E\cup F$ except possibly if $U(p)$ contains a segment of eigenvaluations for $f^2$.
\end{lem}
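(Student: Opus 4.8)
The plan is to combine the valuative indeterminacy criterion \hyperref[lem:indet]{Lemma~\ref*{lem:indet}} with the local description of $f_\bullet$ near the divisorial valuations $\nu_E$ and $\nu_F$. Since $f(E)=F$ and $f(F)=E$, the tangent map $f_\bullet$ interchanges $\nu_E$ and $\nu_F$ (which coincide if $E=F$), so both are fixed by $f^2_\bullet$, and being divisorial they are eigenvaluations for $f^2$. A periodic point $p$ of the induced self-map of $E\cup F$ corresponds to a tangent vector $\vec{v}_p$ at $\nu_E$ or $\nu_F$ that is periodic for $df_\bullet$; writing $k$ for its period, $df^k_\bullet$ fixes $\vec{v}_p$, and by \hyperref[prop:regularity]{Proposition~\ref*{prop:regularity}(3)} applied to $f^k$ there is a segment of $\mc{V}$ issuing from $\nu_E$ (say) in the direction $\vec{v}_p$ on which $f^k_\bullet$ acts through a M\"obius transformation $H$ with coefficients in $\N$; since $\nu_E$ is a fixed endpoint of this segment, $H$ is increasing and fixes $\alpha(\nu_E)$. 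By \hyperref[lem:mobius]{Lemma~\ref*{lem:mobius}} we are then in one of two cases, and by functoriality of $f_\bullet$ the same case occurs for every vector in the orbit of $\vec{v}_p$.

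In the first case $H=\mathrm{id}$, so $f^k_\bullet$ fixes pointwise a nondegenerate segment in the direction $\vec{v}_p$; these valuations are quasimonomial of finite thinness and fixed by $f^k_\bullet$, hence satisfy $c(f^k,\cdot)=c_\infty^k$ and are eigenvaluations for $f^k$, and by the reduction carried out in \hyperref[prop:tot_inv]{Proposition~\ref*{prop:tot_inv}} and the propositions following it (which rest on \hyperref[prop:onedim]{Proposition~\ref*{prop:onedim}}) this segment can be shrunk to one consisting of eigenvaluations for $f^2$. Since the segment lies in $U(p)$, the point $p$ is among the permitted exceptions and needs no further treatment. Conversely, any periodic $p$ for which $U(p)$ contains a segment of $f^2$-eigenvaluations falls into this case.

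In the second case $\nu_E$ is an attracting fixed point of $f^k_\bullet$ along $\vec{v}_p$, and we resolve the indeterminacy of $f$ at $p$ by a blowup along the orbit of $p$. Only finitely many periodic orbits of the self-map of $E\cup F$ contain an indeterminacy point of $f\colon Y\dashrightarrow Y$; those falling into the first case are left unmodified (they are the permitted exceptions), and we must modify the others, say $\mathcal O_1,\dots,\mathcal O_r$. At a periodic point lying on none of these orbits, $f$ is already holomorphic on $Y$, and since distinct orbits are disjoint this persists after the modification, as the containment $f_\bullet U(p)\subseteq U(f(p))$ is a statement about $\mc{V}$ alone. Fix an orbit to modify, with representative $\vec{v}_p$ at $\nu_E=\nu_{E_0}$ of period $k$, and write $\nu_{E_i}$, $\vec{v}_{p_i}=df^i_\bullet\vec{v}_p$ for the orbit (so $f_\bullet\nu_{E_i}=\nu_{E_{i+1}}$). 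Choose a divisorial $\theta^{(0)}$ in the direction $\vec{v}_p$ so close to $\nu_E$ that it lies in the attracting basin of $\nu_E$ for $f^k_\bullet$ and that $f_\bullet$ is monotone on each short segment $U_i$ bounded by $\nu_{E_i}$ and $\theta^{(i)}:=f^i_\bullet\theta^{(0)}$; such $\theta^{(0)}$ exist, and each $\theta^{(i)}$ is divisorial. Let $\pi\colon X\to(\C^2,0)$ be a blowup dominating $\eta$ and realizing the $\theta^{(i)}$, for all orbits and all $i$, in such a way that $\theta^{(i)}$ is the first exceptional prime of $X$ encountered from $\nu_{E_i}$ in the direction $\vec{v}_{p_i}$. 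Then in $X$ the point arising from $p_i$ has associated open set $U_i$, and by the monotonicity of $f_\bullet$ we get $f_\bullet U_i=U_{i+1}$ for $0\le i\le k-2$, while $f_\bullet U_{k-1}$ is the segment bounded by $\nu_{E_0}$ and $f^k_\bullet\theta^{(0)}$, which is contained in $U_0$ because $f^k_\bullet\theta^{(0)}$ lies between $\nu_{E_0}$ and $\theta^{(0)}$ by the attracting hypothesis. Hence $f_\bullet U_i\subseteq U_{i+1}$ for all $i$, so \hyperref[lem:indet]{Lemma~\ref*{lem:indet}} shows $f$ is holomorphic at each of these points, which proves the lemma.

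The step I expect to be the most delicate is the construction of the blowup $X$ in the attracting case: one must check that the modifications performed for different orbits do not interact; that, for $\theta^{(0)}$ sufficiently close to $\nu_E$, the valuations $\theta^{(i)}$ can indeed be realized so that each is the first exceptional prime met from $\nu_{E_i}$ --- here it matters that $\theta^{(0)}$ be taken among the ``inner'' iterated blowups at $p$ and that the orbit of $\vec{v}_p$ avoid the finitely many critical points of the tangent maps $f\colon E_i\to E_{i+1}$, near which a more careful local blowup is required; and that the pieces on which $f_\bullet$ is order reversing, or on which $\vec{v}_{p_i}$ points toward the root, are handled with the evident sign changes (there the point arising from $p_i$ may be a satellite point of $\nu_{E_{i+1}}$, but the containments $f_\bullet U_i\subseteq U_{i+1}$ still hold once $\theta^{(0)}$ is close enough to $\nu_E$). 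The case $E=F$, with periodic points of arbitrary period, is handled identically.
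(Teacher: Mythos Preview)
Your overall architecture is correct: the dichotomy via \hyperref[lem:mobius]{Lemma~\ref*{lem:mobius}}, the handling of the identity case as the permitted exception, and the use of \hyperref[lem:indet]{Lemma~\ref*{lem:indet}} to certify holomorphicity in the attracting case are exactly the right ingredients. The gap is in the construction of the blowup in the attracting case, and it is a genuine one rather than a rough spot.

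You set $\theta^{(i)}:=f^i_\bullet\theta^{(0)}$ and ask that each $\theta^{(i)}$ be realized as an exceptional prime \emph{adjacent} to the strict transform of $E_i$ (so that the new satellite point $p_i'$ has $U(p_i')$ equal to the open region between $\nu_{E_i}$ and $\theta^{(i)}$). But in a smooth blowup the divisorial valuations that can appear adjacent to $\nu_{E_i}$ in the direction $\vec v_{p_i}$ form a very restricted discrete family: they are exactly the primes obtained by blowing up $p_i$ once and then performing $m_i$ successive satellite blowups along the strict transform of $E_i$, so their skewness distance to $\nu_{E_i}$ is $L_i=b_{E_i}^{-1}(m_ib_{E_i}+b_{F_i})^{-1}$ for some integer $m_i\ge0$. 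There is no reason $f_\bullet$ should carry this family at $p_i$ into the corresponding family at $p_{i+1}$: if $s_i$ denotes the derivative of the local M\"obius map, then $d_\alpha(\nu_{E_{i+1}},\theta^{(i+1)})\approx s_i\,d_\alpha(\nu_{E_i},\theta^{(i)})$, and $s_i$ is an invariant of $f$ over which you have no control, so the resulting length will typically not be of the required form. Taking $\theta^{(0)}$ among the ``inner'' iterated blowups, or avoiding critical points of the tangent map, does not address this. Worse, the individual $s_i$ may well be $>1$; only the product $s_0\cdots s_{k-1}$ is guaranteed to be $<1$ in the attracting case.

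The paper's proof avoids this by \emph{not} using forward orbits. It chooses the integers $m_0,\dots,m_{k-1}$ independently, so that each $\theta^{(i)}$ is by construction adjacent to $E_i$, and then arranges the containments $f_\bullet J_i\subseteq J_{i+1}$ by tuning the lengths: one fixes $\eps>0$ with $s_0\cdots s_{k-1}<(1-2\eps)^k$, picks $m_0$ large, and then inductively chooses $m_{i+1}$ so that $(1-2\eps)L_{i+1}<s_iL_i<(1-\eps)L_{i+1}$. The product condition then forces $s_{k-1}L_{k-1}<(1-\eps)L_0$ as well, closing the cycle. This inequality step is the actual content of the lemma, and it is precisely what your forward-orbit construction tries to bypass but cannot.
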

\begin{proof}
The following proof is identical in spirit to the proof of \cite[Lemma 4.6]{favre-jonsson:eigenval}.
If $E\neq F$, then we may assume with no loss of generality that $E$ and $F$ do not intersect in $Y$, as otherwise we may replace $Y$ with the blowup of $Y$ at the intersection point $E\cap F$.
Suppose $p\in Y$ is a periodic point lying in $E\cup F$ such that the orbit $p$ contains an indeterminacy point of $f\colon Y\dashrightarrow Y$.
Assume, moreover, that $U(p)$ contains no segment of eigenvaluations for $f^2$.
To prove the lemma, it suffices to show that there is a blowup $\mu\colon X\to Y$ over the orbit of $p$ such that $f\colon X\dashrightarrow X$ is holomorphic along the orbit of $p$, because this introduces no other indeterminacy points in $E\cup F$, and hence strictly reduces the number of periodic indeterminacy points in $E\cup F$.

Let $n$ be the minimal period of $p$, and set $p_i = f^i(p)$ for each $i = 0,\ldots, n-1$.
For each $i$ such that $p_i\in E$, blowup the point $p_i$ to obtain an exceptional prime $F_i$, and then successively blowup the strict transform of $E$ with the previously obtained exceptional divisor some large number $m_i$ of times, to be determined later.
Let $E_i$ denote the last exceptional prime obtained in the process.
Do the same procedure for the points $p_i$ lying in $F$, with $E$ replaced by $F$.
Let $\pi\colon X\to (\C^2,0)$ denote the composition of all of these blowups.
If $i$ is such that $p_i\in E$, then the segment $J_i:=[\nu_E, \nu_{E_i}]\subset\mc{V}$ lies in the tangent direction at $\nu_{E}$ defined by $p_i$, and has length $L_i = b_E^{-1}(m_ib_E + b_{F_i})^{-1}$ in the skewness metric; the analogous statement holds for those $i$ with $p_i\in F$, just with $E$ replaced by $F$.
If the $m_i$ are chosen large enough, the lengths $L_i$ can be taken as small as desired, and hence we may assume that $f_\bullet\colon J_i\to f_\bullet(J_i)$ acts as a M\"{o}bius transformation with non-negative integer coefficients in the skewness parameterization.
Let $s_i$ be the derivative of this M\"{o}bius transformation at $\alpha(\nu_E)$ or $\alpha(\nu_F)$.
While we have little control over the $s_i$ individually, the product $s_0\cdots s_{n-1}$ must be $<1$.
This is because $f^n_\bullet$ is either a contraction on $J_0$ or else the identity by \hyperref[lem:mobius]{Lemma~\ref*{lem:mobius}}; the latter case is ruled out by our assumption that $U(p)$ contains no segment of eigenvaluations for $f^2$ and the results of \S4. 

Let $\eps>0$ be small enough that $s_0\cdots s_{n-1}<(1-2\eps)^n$.
We now prove that the $m_i$ can be chosen so that the lengths $L_i$ are arbitrarily small satisfying $s_iL_i<(1-\eps)L_{i+1}$, the indices taken modulo $n$.
Since, if the $J_i$ are small, $f_\bullet \colon J_i\to f_\bullet(J_i)$ is approximately a linear expansion by $s_i$ in the skewness metric, this condition will imply that $f_\bullet(J_i)\subset J_{i+1}$.

Take $m_0$ to be any large integer, and then choose $m_1$ such that $(1-2\eps)L_{1}<s_0L_0<(1-\eps)L_1$.
Such an $m_1$ will necessarily also be large.
Now choose $m_2$ such that $(1-2\eps)L_2 < s_1L_1 < (1-\eps)L_2$.
Again, if $m_0$ was chosen large enough, then $m_2$ will necessarily be large.
Continue in this manner until one has large integers $m_0,\ldots, m_{n-1}$ such that $(1-2\eps)L_i<s_{i-1}L_{i-1}<(1-\eps)L_i$ for each $i = 1,\ldots, n-1$.
Note that this implies
\[
s_{n-1}L_{n-1} < \frac{s_0\cdots s_{n-1}}{(1-2\eps)^{n-1}} L_0 < (1-2\eps)L_0 < (1-\eps)L_0,
\]
so that in fact $s_iL_i< (1-\eps)L_{i+1}$ for all $i$ (taken modulo $n$).
If we take the $L_i$ small enough, then $f_\bullet$ expands $\alpha$-distances roughly by a factor of $s_i$ on $J_i$, so these inequalities prove that in fact $f_\bullet(J_i)\subseteq J_{i+1}$.
If the $L_i$ are small enough, then $f_\bullet$ is order preserving on those valuations $\nu\in \mc{V}$ whose retraction onto $J_i$ is contained in the interior of $J_i$.
By \hyperref[lem:indet]{Lemma~\ref*{lem:indet}}, this says precisely that $f\colon X\dashrightarrow X$ is holomorphic at each of the points $p_i$.
\end{proof}

We now continue with the proof of \hyperref[thm:weakstability2]{Theorem~\ref*{thm:weakstability2}}.
The next case we consider is the following.

\smallskip
\textbf{Case 2:}
Assume that $f$ has a unique, globally attracting divisorial eigenvaluation $\nu_\star$.
Let $\pi\colon X\to (\C^2,0)$ be a blowup over the origin containing the exceptional prime $E_\star$ associated to $\nu_\star$.
Since $\nu_\star$ is an eigenvaluation, $f(E_\star) = E_\star$.
According to \hyperref[lem:good_blowup]{Lemma~\ref*{lem:good_blowup}}, up to taking a further blowup we may assume that no indeterminacy points of $f\colon X\dashrightarrow X$ in $E_\star$ are periodic.
Let $U\subseteq\mc{V}$ be the open set consisting of all $\nu$ whose center in $X$ is contained in $E_\star$.
This is a weak open neighborhood of $\nu_\star$, and hence by \hyperref[thm:basins]{Theorem~\ref*{thm:basins}}, the orbit of every quasimonomial valuation $\nu$ eventually lies in $U$.
In particular, if $\nu = \nu_E$ is the divisorial valuation associated to an exceptional prime $E\subseteq \pi^{-1}(0)$ and $\xi$ is the generic point of $E$, then
\[
f^n(\xi) = f^n(\Center_X(\nu_E)) = \Center_X(f^n_\bullet\nu_E) \in E_\star
\]
for large enough $n$. Either $f^m(\xi)$ will be the generic point of $E_\star$ for some $m$, in which case $f^n(E) = E_\star$ for all $n\geq m$, or else $f^n(E)$ is a closed point of $E_\star$ for all large enough $n$.
Since indeterminacy points of $f$ in $E_\star$ are not periodic, we conclude that $f^n(E)$ is not an indeterminacy point of $f$ for large enough $n$, completing the proof in this case.

\smallskip
\textbf{Case 3:}
We now assume that $f$ has a nontrivial segment of eigenvaluations.
Let $I$ be the maximal segment of fixed points for $f_\bullet$.
Let $\pi\colon X\to (\C^2,0)$ be any blowup containing the exceptional primes associated to the minimal element of $I$ and, if $I$ has divisorial endpoints, the endpoints of $I$.
Let $E_1,\ldots, E_r$ denote the exceptional primes in $X$ such that $\nu_{E_i}\in I$.
Each of these is fixed by $f$, and hence using \hyperref[lem:good_blowup]{Lemma~\ref*{lem:good_blowup}} we may pass to a further blowup if necessary to ensure that $f$ is holomorphic at all periodic points of the $E_i$, except possibly those points lying in the intersection of two $E_i$.
However, by construction if $p$ lies in the intersection of two $E_i$, then $f_\bullet U(p)\subseteq U(p)$, so $f$ is holomorphic at these points as well.
If $E$ is any exceptional prime of $\pi$, then \hyperref[thm:basins]{Theorem~\ref*{thm:basins}} gives us that the center of $f^n_\bullet \nu_E$ is eventually contained in $E_i$ for some $i$.
Arguing as in case 2, the theorem follows.

\smallskip
\textbf{Case 4:}
Finally, assume that $f$ has a nontrivial segment of eigenvaluations for $f^2$, but not for $f$.
Let $I$ be the maximal interval of fixed points for $f^2_\bullet$.
Let $\eta\colon Y\to (\C^2,0)$ be any blowup containing the exceptional primes associated to the minimal element of $I$ and, if $I$ has divisorial endpoints, the endpoints of $I$.
Let $\eta'\colon Y'\to (\C^2,0)$ be the minimal further blowup such that if $E$ is the strict transform of an exceptional prime of $Y$ with $\nu_E\in I$, then there is an exceptional prime $E'$ of $Y'$ such that $f(E) = E'$.
Since $\eta'$ is chosen minimal with this property, any exceptional prime $F$ of $Y'$ that does not appear in $Y$ is such that $\nu_F\in I$.
Let $S$ denote the set of strict transforms of exceptional primes $E$ of $Y$ with $\nu_E \in I$ and their images $f(E) = E'$.
Since $f^2(E) = E$ for any $E\in S$, \hyperref[lem:good_blowup]{Lemma~\ref*{lem:good_blowup}} allows us to find a further blowup $\pi'\colon X'\to (\C^2,0)$ such that $f$ 
is holomorphic at all periodic points of any exceptional prime $E\in S$, except possibly at those points corresponding to tangent directions at $\nu_E$ into $I$.
Label the primes in $S$ as $E_1,\ldots, E_r$ so that $[\nu_{E_1}, \nu_{E_2}]$, $[\nu_{E_2},\nu_{E_3}],\ldots, [\nu_{E_{r-1}}, \nu_{E_r}]$ are abutting subintervals of $I$.
We then let $\pi\colon X\to (\C^2,0)$ be the modification obtained by contracting the chains of exceptional primes of $X'$ corresponding to divisorial valuations contained in the open intervals $(\nu_{E_i}, \nu_{E_{i+1}})$.
The divisors $E_1,\ldots, E_r$ now form a chain in $X$, and \hyperref[lem:indet]{Lemma~\ref*{lem:indet}} implies that $f$ is holomorphic at each of the points of intersection $E_i\cap E_{i+1}$.
It follows that $f$ is holomorphic at all the periodic points of the $E_i$.
If $E$ is any exceptional prime of $X$, then \hyperref[thm:basins]{Theorem~\ref*{thm:basins}} tells us that $f^n_\bullet(\nu_E)$ will eventually have center lying within $E_i\cup f(E_i)$ for some 
index $i$.
Arguing as before, it follows that $f^n(E)$ is not an indeterminacy point of $f$ for $n$ sufficiently large.
This completes the proof.
\end{proof}

\begin{cor}\label{cor:recursion}
Let $f\colon (\C^2,0)\to (\C^2,0)$ be a finite superattracting germ.
Then the attraction rates $c(f^n)$ eventually satisfy a linear integral recursion formula.
\end{cor}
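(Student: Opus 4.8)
The plan is to deduce the corollary formally from \hyperref[thm:weakstability]{Theorem~\ref*{thm:weakstability}}, whose proof has just been completed. First I would apply that theorem to fix a modification $\pi\colon X\to(\C^2,0)$ dominating the blowup of the origin, together with an integer $N\geq 0$, for which the pullbacks are eventually functorial, i.e. $(f^n)^* = (f^*)^{n-N}(f^N)^*$ on $\Div_\Q(\pi)$ for all $n>N$ (and trivially for $n=N$ as well). Since $\pi$ dominates the blowup of the origin, there is a divisor $D\in\Div_\Q(\pi)$ with $c(f^n) = -D\cdot(f^n)^*D$ for every $n\geq 1$, as recorded in the introduction; the intersection product makes sense because every Weil divisor on $X$ is $\Q$-Cartier.

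Next I would set $w := (f^N)^*D\in\Div_\Q(\pi)$, so that $(f^n)^*D = (f^*)^{n-N}w$ for all $n\geq N$, whence $c(f^n) = -D\cdot(f^*)^{n-N}w$ for $n\geq N$. The endomorphism $f^*$ of the finite-dimensional space $\Div_\Q(\pi)$ is represented by an integer matrix in the basis of exceptional primes of $\pi$, so its characteristic polynomial $\chi(t) = \sum_{j=0}^{d} b_j t^j$ is monic with integer coefficients ($b_d = 1$, $b_0,\dots,b_{d-1}\in\Z$). By the Cayley--Hamilton theorem, $\sum_{j=0}^{d} b_j(f^*)^j$ is the zero operator on $\Div_\Q(\pi)$. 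Applying this operator identity to the vector $(f^*)^{n-N}w$ and pairing with $-D$ gives
\[
\sum_{j=0}^{d} b_j\, c(f^{n+j}) \;=\; \sum_{j=0}^{d} b_j\left(-D\cdot(f^*)^{\,n+j-N}w\right) \;=\; -D\cdot\Bigl(\sum_{j=0}^{d} b_j(f^*)^j\Bigr)(f^*)^{n-N}w \;=\; 0
\]
for every $n\geq \max(N,1)$: for such $n$ and all $0\le j\le d$ one has $n+j\geq N\geq 0$, so the eventual-functoriality identity applies, and $n+j\geq 1$, so the formula $c(f^m) = -D\cdot(f^m)^*D$ applies. This is precisely an integral linear recursion relation of order $d$ for the sequence $c(f^n)$, valid for all sufficiently large $n$.

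I do not expect any genuine obstacle here: all of the substance is contained in \hyperref[thm:weakstability]{Theorem~\ref*{thm:weakstability}}, and what remains is the elementary fact that an integer matrix is annihilated by a monic polynomial with integer coefficients. The only minor points to verify are that the intersection formula $c(f^n)=-D\cdot(f^n)^*D$ and the functoriality statement hold on the \emph{same} modification and in compatible ranges of $n$, which is immediate from the construction of $X$ in \hyperref[thm:weakstability]{Theorem~\ref*{thm:weakstability}}. (The sharper assertion in \hyperref[thm:recursionrelation]{Theorem~\ref*{thm:recursionrelation}} that, after replacing $f$ by an iterate, the relation can be taken of order at most two is not needed for this corollary; it requires the finer classification of eigenvaluations from \S3--\S4.)
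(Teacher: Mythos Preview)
Your proposal is correct and follows essentially the same approach as the paper's own proof: derive the intersection-theoretic formula $c(f^n) = -D\cdot(f^n)^*D$ on the modification furnished by \hyperref[thm:weakstability]{Theorem~\ref*{thm:weakstability}}, use eventual functoriality to write $(f^n)^*D = (f^*)^{n-N}(f^N)^*D$, and finish with elementary linear algebra. The only difference is that you make the final step explicit via Cayley--Hamilton, whereas the paper simply invokes ``an easy linear algebra argument.''
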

\begin{proof}
Let $\pi_0\colon X_0\to (\C^2,0)$ be the blowup of the origin in $\C^2$, and let $E$ denote its unique exceptional prime.
It is easy to see that the attraction rates $c(f^n)$ are given by the intersection numbers $c(f^n) = -E\cdot (f^n)^*E$, where here $(f^n)^*$ is the pullback $\Div_\Q(\pi_0)\to \Div_\Q(\pi_0)$.
Let $\pi\colon X\to (\C^2,0)$ be the modification given in \hyperref[thm:weakstability]{Theorem~\ref*{thm:weakstability}}.
Since $\pi$ dominates $\pi_0$, there is a holomorphic map $\mu\colon X\to X_0$ such that $\pi = \pi_0\circ \mu$.
Let $D = \mu^*E$.
It follows that $c(f^n) = -D\cdot (f^n)^*D$, where now $(f^n)^*$ denotes the pullback $\Div_\Q(\pi)\to \Div_\Q(\pi)$.
\hyperref[thm:weakstability]{Theorem~\ref*{thm:weakstability}} then tells us that there is an integer $N\geq 0$ such that $c(f^n) = -D\cdot (f^*)^{n-N}(f^N)^*D$ for $n>N$.
Since $f^*$ is $\Z$-linear, an easy linear algebra argument allows us to conclude that $c(f^n)$ satisfies an integral linear recursion formula when 
$n>N$.
\end{proof}

One consequence of \hyperref[cor:recursion]{Corollary~\ref*{cor:recursion}} is that, if the modification $\pi\colon X\to (\C^2,0)$ from \hyperref[thm:weakstability]{Theorem~\ref*{thm:weakstability}} were such that $(f^n)^* = (f^*)^n$ for all $n\geq 1$, then the attraction rates $c(f^n)$ would satisfy an integral linear recursion formula for all $n\geq 1$.
We will use this observation to show that one cannot always find such an $X$ in \hyperref[ex:not_stable]{Example~\ref*{ex:not_stable}}.

\section{Proof of Theorem A}

In this section, we use the results from \S3 and \S4 to prove \hyperref[thm:recursionrelation]{Theorem~\ref*{thm:recursionrelation}}.
Indeed, we shall prove a more general and precise result, from which \hyperref[thm:recursionrelation]{Theorem~\ref*{thm:recursionrelation}} follows by taking $\nu = \ord_0$.

\begin{thm}\label{thm:recursioncases} Let $f\colon (\C^2,0)\to (\C^2,0)$ be a dominant superattracting fixed point germ.
Suppose $\nu\in \mc{V}$ is a valuation of finite thinness.
Let $c_n := c(f^n,\nu)$ for all $n\geq 1$.
\begin{enumerate}
\item[$1.$] If $f$ has an end eigenvaluation, then the sequence $c_n$ eventually satisfies an integral linear recursion relation of order $1$.
\item[$2.$] If $f$ has a unique, globally attracting irrational eigenvaluation, then the sequence $c_n$ eventually satisfies an integral linear recursion relation of order $2$.
\item[$3.$] If $f$ has a unique, globally attracting divisorial eigenvaluation, then for some integer $m\geq 1$ independent of $\nu$, the sequence $c_{nm}$ eventually satisfies an integral linear recursion relation of order~$2$.
\item[$4.$] If there is a non-degenerate segment of eigenvaluations for $f^2$, then for some integer $m\geq 1$ independent of $\nu$, the sequence $c_{nm}$ eventually satisfies an integral linear recursion relation of order $2$.
\end{enumerate}
It follows that in every case, the sequence $c_n$ eventually satisfies an integral linear recursion relation, though possibly of order $>2$.
\end{thm}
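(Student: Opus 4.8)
The plan is to reduce all four cases to the local dynamics of $f_\bullet$ near an eigenvaluation, using the cocycle identity $c(f^{a+b},\nu)=c(f^a,\nu)\,c(f^b,f^a_\bullet\nu)$. Choose an iterate $f^m$ --- one may take $m=1$ in situations 1 and 2, and in situations 3 and 4 a suitable common period of the finitely many ``special'' tangent directions at the eigenvaluation produced in \S3 --- and put $C_k:=c(f^{mk},\nu)$, so that $C_{k+1}=c(f^m,f^{mk}_\bullet\nu)\,C_k$. By \hyperref[thm:basins]{Theorem~\ref*{thm:basins}} the orbit of $\nu$ converges to an eigenvaluation $\nu_\star$ of $f^m$. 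In situations 2, 3 and 4 I would first check that for $k$ large $f^{mk}_\bullet\nu$ retracts onto a fixed short segment $[\nu_\star,\nu_2]$ ($\nu_2$ divisorial), chosen small enough to avoid the finite trees attached by \hyperref[prop:loc_const]{Proposition~\ref*{prop:loc_const}} and \hyperref[prop:skewness_eval]{Proposition~\ref*{prop:skewness_eval}} to the functions $c(f^m,-)$ and $\mu\mapsto\mu(J_{f^m})$, and small enough that the multiplicity $m(\cdot)$ is constant on it; on such a segment $f^m_\bullet$ commutes with the retraction $r$ onto it, $c(f^m,-)=c(f^m,r(-))$ nearby, and, parametrising by skewness $t=\alpha(\nu_t)$, \hyperref[prop:loc_const]{Proposition~\ref*{prop:loc_const}} and \hyperref[prop:regularity]{Proposition~\ref*{prop:regularity}(3)} give that $c(f^m,\nu_t)=pt+q$, $A(\nu_t)=A_1t+A_0$ and $\nu_t(J_{f^m})$ are all affine in $t$ (with $p,q\in\Q$ and $A_1\geq1$), and that $f^m_\bullet\nu_t=\nu_{H(t)}$ for an integral M\"obius transformation $H(t)=(at+b)/(ct+d)$, $ad-bc\neq0$, fixing $t_\star:=\alpha(\nu_\star)$. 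Setting $\alpha_k:=\alpha\bigl(r(f^{mk}_\bullet\nu)\bigr)$, this yields $C_{k+1}=(p\alpha_k+q)\,C_k$ and $\alpha_{k+1}=H(\alpha_k)$ for $k$ large.

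The heart of the argument is a compatibility between $H$ and the affine function $c(f^m,-)$. The \hyperref[jacobian_formula]{Jacobian formula} rewrites $c(f^m,\nu_t)=\bigl(A(\nu_t)+\nu_t(J_{f^m})\bigr)/A(\nu_{H(t)})$; since $A(\nu_{H(t)})=\bigl(A_1(at+b)+A_0(ct+d)\bigr)/(ct+d)$, the right-hand side is $P(t)(ct+d)\big/\bigl(A_1(at+b)+A_0(ct+d)\bigr)$, where $P:=A(\nu_t)+\nu_t(J_{f^m})$ is affine and not identically zero. As the left-hand side is affine in $t$, the linear denominator must divide the quadratic numerator; its root cannot be $-d/c$ (this would force $A_1(ad-bc)=0$), so it is the root of $P$, and after cancellation $c(f^m,\nu_t)=\kappa(ct+d)$ for some $\kappa\in\Q$ --- in other words $(p,q)=\kappa(c,d)$. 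When $\nu_\star$ is an end (situation 1), the analogous computation, combined with $c(f,-)$ being bounded above and converging to $c_\infty$ along the orbit, forces $c=0$, i.e.\ $c(f,-)$ is eventually constant equal to $c_\infty$. I expect this compatibility --- and, more broadly, the bookkeeping needed to genuinely reach a single ``affine $\times$ M\"obius'' regime near $\nu_\star$ (which in the divisorial and segment cases dictates the choice of $m$ and requires analysing how orbits approach $\nu_\star$ from the various tangent directions) --- to be the main obstacle.

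Granting the compatibility, lift $H$ to $M=\left(\begin{smallmatrix}a&b\\c&d\end{smallmatrix}\right)\in M_2(\Z)$. Then $(p,q)=\kappa(c,d)$ forces the identity $p\alpha_{k+1}+q=\kappa\operatorname{tr}(M)-\kappa^2\det(M)\big/(p\alpha_k+q)$, and multiplying through by $C_{k+1}=(p\alpha_k+q)C_k$ turns it into the order-two relation $C_{k+2}=\kappa\operatorname{tr}(M)\,C_{k+1}-\kappa^2\det(M)\,C_k$. Its characteristic roots are $\kappa\lambda_1,\kappa\lambda_2$, where $\lambda_1,\lambda_2$ are the eigenvalues of $M$; since $(t_\star,1)$ is an eigenvector of $M$ and $\alpha_k\to t_\star$, the quadratic integer $c_\infty^m=pt_\star+q$ is one of these roots, and the other is its conjugate over $\Q$. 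A short computation using the fact (Favre--Jonsson; see \cite{jonsson:berkovich}) that $c_\infty$, hence $c_\infty^m$, is a quadratic integer then shows $\kappa\operatorname{tr}(M)$ and $\kappa^2\det(M)$ are rational integers, so the recursion is integral of order $2$; this settles situations 2, 3 and 4. In situation 1, the previous paragraph gives $C_{k+1}=c_\infty C_k$ instead, and since $c_\infty$ is the value of $c(f,-)$ at a divisorial valuation it is rational, hence (being a quadratic integer) a rational integer --- an integral recursion of order $1$.

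It remains to deduce the final assertion by unrolling the iterate $f^m$. For $0\leq j<m$ the cocycle identity gives $c(f^{mk+j},\nu)=C_k\cdot c(f^j,f^{mk}_\bullet\nu)$, and for $k$ large $c(f^j,f^{mk}_\bullet\nu)=c(f^j,\nu_{\alpha_k})=p_j\alpha_k+q_j$ is affine in $\alpha_k$ by \hyperref[prop:loc_const]{Proposition~\ref*{prop:loc_const}}; since $C_k\alpha_k=(C_{k+1}-qC_k)/p$, each sequence $k\mapsto c(f^{mk+j},\nu)$ is a fixed $\Q$-linear combination of $C_{k+1}$ and $C_k$, hence satisfies the same integral order-two recursion $y_{k+2}=s_1y_{k+1}+s_0y_k$ found above. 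Running $j$ over all residues modulo $m$ gives $c(f^{\ell+2m},\nu)=s_1\,c(f^{\ell+m},\nu)+s_0\,c(f^\ell,\nu)$ for all large $\ell$, an integral linear recursion for $c_n=c(f^n,\nu)$ of order at most $2m$.
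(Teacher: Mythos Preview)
Your overall strategy matches the paper's: reduce to a short invariant segment near the eigenvaluation, use the retraction and the cocycle identity, and analyse the induced one-dimensional dynamics. The substantive difference is the mechanism producing the order-two recursion. The paper proves a geometric lemma (\hyperref[lem:geometric]{Lemma~\ref*{lem:geometric}}): on an invariant segment $[\nu_E,\nu_F]$ with $E,F$ transversely intersecting exceptional primes in some blowup, the monomial weights $(r_n,s_n)$ of $f^n_*\nu$ obey $(r_n,s_n)=M'(r_{n-1},s_{n-1})$ for an \emph{integer} matrix $M'$ of intersection numbers, and $c_n=b_Er_n+b_Fs_n$ then satisfies $c_{n+2}=\operatorname{tr}(M')\,c_{n+1}-\det(M')\,c_n$ with integral coefficients by construction. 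You instead use the Jacobian formula to derive the compatibility $c(f^m,\nu_t)=\kappa(ct+d)$ with the denominator of the M\"obius map $H$, and from this obtain $C_{k+2}=\kappa\operatorname{tr}(M)\,C_{k+1}-\kappa^2\det(M)\,C_k$. This compatibility argument is elegant and correct, and is a genuinely different route to the recursion.

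The gap is in your integrality step for Cases~3 and~4. You claim $\kappa\lambda_2$ is the Galois conjugate of $c_\infty^m$ over $\Q$. In Case~2 this holds: $\lambda_1,\lambda_2$ are conjugate irrational quadratic integers and $\kappa\in\Q$, so $\kappa\lambda_2=\overline{c_\infty^m}$. But when $\nu_\star$ is divisorial, $t_\star\in\Q$, hence $\lambda_1=ct_\star+d\in\Q$ and $c_\infty^m=\kappa\lambda_1\in\Z$; its only Galois conjugate is itself, so your claim would force $\lambda_1=\lambda_2$. This already fails in \hyperref[ex:CAR]{Example~\ref*{ex:CAR}}: for $g=f^4$ on edge~I one has $H(t)=50t/(36t+5)$ and $c(g,\nu_t)=2(36t+5)$, giving $\kappa=2$, $\lambda_1=50$, $\lambda_2=5$, and $\kappa\lambda_2=10\neq 100=c_\infty^4$. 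The coefficients $\kappa\operatorname{tr}(M)=110$ and $\kappa^2\det(M)=1000$ happen to be integers there, but your argument establishes only that they are rational; I do not see how to deduce that $\kappa\lambda_2$ is an algebraic integer from the Jacobian formula alone. The paper's intersection-theoretic lemma sidesteps this entirely. A smaller remark: your Case~1 sketch is vague, since the Jacobian-formula manipulation does not apply near an end with $A(\nu_\star)=+\infty$; the paper simply uses that $c(f,-)\equiv c_\infty\in\Z$ on a weak neighbourhood of $\nu_\star$, citing \cite[Theorem~5.1]{favre-jonsson:eigenval}.
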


This theorem is the local analogue of results proved by Favre-Jonsson in the setting of polynomial maps of $\C^2$ \cite[Corollaries 3.3, 4.2, 4.4]{favre-jonsson:dynamicalcompactifications}.
In their proofs of these results, they use normal forms of rigid holomorphic germs in dimension $2$.
We could, in fact, argue the same way here, but we will give a different proof, using the geometric methods discussed in \S1.

\begin{lem}\label{lem:geometric}
Let $f\colon (\C^2,0)\to (\C^2,0)$ be a dominant superattracting holomorphic fixed point germ.
Let $\pi\colon X\to \C^2$ be a blowup over $0$ and suppose that $E$ and $F$ are exceptional primes of $\pi$ that intersect transversely.
Let $\nu_E, \nu_F\in \mc{V}$ be the divisorial valuations corresponding to $E$ and $F$.
Suppose that the interval $I = [\nu_E, \nu_F]$ is invariant in the sense that $f_\bullet I\subseteq I$.
Then for every $\nu\in I$ the sequence $c_n:= c(f^n, \nu)$ satisfies an integral linear recursion relation of order at most $2$.
\end{lem}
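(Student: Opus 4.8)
The plan is to exploit the geometric interpretation of skewness from \S1.5, together with the structure of $f_\bullet$ on the invariant segment $I=[\nu_E,\nu_F]$. The key point is that, since $E$ and $F$ intersect transversely at a point $p$, the segment $I$ corresponds to valuations arising as monomial valuations in local coordinates $(z,w)$ at $p$ with $E=\{z=0\}$ and $F=\{w=0\}$; concretely, each $\nu\in I$ is determined by a pair $(r,s)$ of weights (up to normalization $\nu(\mf m)=1$), and the divisor $Z_{\nu,\pi}$ depends affinely on $(r,s)$. Thus $I$ can be parametrized by a single real parameter, say $t=\alpha(\nu)$, the skewness, and as $\nu$ ranges over $I$ the associated divisor $Z_{\nu,\pi}\in\R\otimes_\Z\Div(\pi)$ traces out an affine line segment: $Z_{\nu,\pi}=Z_0+tZ_1$ for fixed divisors $Z_0,Z_1$. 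Since $c(f,\nu)=-(Z_{f_\bullet\nu,\pi}\cdot Z_{\ord_0,\pi})$ type formulas hold (more precisely $\nu(\mf m)=-(Z_{\nu,\pi}\cdot Z_{\ord_0,\pi})$, and pushforward/pullback of these divisors under $f$ is controlled), the attraction rate $c(f,\nu)$ will be an affine function of $t$ on each piece where $f_\bullet$ is "linear" in the Möbius sense of \hyperref[prop:regularity]{Proposition~\ref*{prop:regularity}(3)}.

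First I would reduce, using \hyperref[prop:regularity]{Proposition~\ref*{prop:regularity}(3)} and the invariance $f_\bullet I\subseteq I$, to the case where $f_\bullet\colon I\to I$ acts (in the skewness parameter $t$) as a single Möbius transformation $t\mapsto H(t)=(at+b)/(ct+d)$ with $a,b,c,d\in\N$, $ad-bc\neq 0$ — possibly after subdividing $I$ into finitely many abutting subintervals and passing to an iterate of $f$ (a finite recursion relation for a subsequence $c_{nm}$ still yields one for $c_n$, and here we want order at most $2$ so one must be slightly careful, but on a single Möbius piece the analysis is uniform). Next, I would observe that the cocycle relation $c(f^n,\nu)=\prod_{i=0}^{n-1}c(f,f_\bullet^i\nu)$ combined with $c(f,\nu)$ being affine in $t$ — say $c(f,\nu)=\lambda t+\kappa$ for constants $\lambda,\kappa$ on $I$ — means we need to understand the product $\prod_{i=0}^{n-1}(\lambda t_i+\kappa)$ where $t_{i+1}=H(t_i)$. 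The crucial algebraic fact is the standard linearization of Möbius dynamics: lifting $H$ to the $2\times 2$ integer matrix $M=\begin{pmatrix}a&b\\c&d\end{pmatrix}$, if we write $t_i$ as a ratio of the two entries of $M^i\binom{t_0}{1}$, then $\lambda t_i+\kappa$ telescopes against these homogeneous coordinates, so that $c(f^n,\nu)=\prod_{i=0}^{n-1}(\lambda t_i+\kappa)$ collapses to an expression of the form $(\text{linear combination of entries of }M^n)/(\text{entry of }M^0)$, i.e. $c(f^n,\nu)$ is, up to a fixed constant, an integer linear combination of the entries of $M^n$.

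The final step is then pure linear algebra: the entries of $M^n$ satisfy the order-$2$ integral recursion coming from the Cayley–Hamilton relation $M^2=(\mathrm{tr}\,M)M-(\det M)\,I$, namely each entry sequence $(M^n)_{kl}$ satisfies $x_{n+2}=(\mathrm{tr}\,M)x_{n+1}-(\det M)x_n$. Since $c(f^n,\nu)$ is a fixed $\Q$-linear — in fact, after clearing the single denominator, $\Z$-linear up to a common rational factor — combination of such sequences, it satisfies the same order-$2$ integral recursion. (One must note that if the common factor is not an integer, one multiplies the recursion through appropriately; alternatively, one argues that $c(f^n,\nu)$ is a sequence of nonnegative integers satisfying a recursion over $\Q$ and invokes that such a sequence satisfies one over $\Z$, e.g. by taking the minimal recursion and clearing denominators.) I would also separately handle the degenerate cases: if $c=0$ in $H$ (so $H$ is affine) or if $\lambda=0$ (so $c(f,\nu)$ is constant on $I$), the argument simplifies and in fact one may even get order $1$.

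\medskip

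The main obstacle I anticipate is \emph{bookkeeping the reduction to a single Möbius piece while keeping the recursion order at $\le 2$}: \hyperref[prop:regularity]{Proposition~\ref*{prop:regularity}(3)} only guarantees piecewise-Möbius behavior, and a priori the orbit of $\nu$ under $f_\bullet$ could pass through several pieces before settling down. The resolution is that $I$ is invariant, so the orbit stays in $I$, and the finite subtree from \hyperref[prop:loc_const]{Proposition~\ref*{prop:loc_const}} meets $I$ in finitely many points, cutting $I$ into finitely many subintervals on each of which $f_\bullet$ is a single Möbius map and $c(f,-)$ is affine; since $f_\bullet$ is continuous and monotone on each, the orbit is eventually trapped in one subinterval (or cycles among finitely many, which is handled by passing to an iterate). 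Making this "eventually trapped in one Möbius piece" precise — and checking that passing to the relevant iterate does not push the recursion order above $2$, which works because an iterate of a Möbius map is again Möbius with the same $2\times 2$-matrix framework — is where the care is needed; everything else is the linear-algebra computation sketched above.
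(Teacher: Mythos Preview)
Your endpoint is right --- everything comes down to Cayley--Hamilton for a $2\times 2$ integer matrix --- but you are not using the hypothesis that $E$ and $F$ intersect transversely, and that is exactly the hypothesis that dissolves the ``main obstacle'' you identify. As you have written it, you fall back on the generic piecewise-M\"obius structure of \hyperref[prop:regularity]{Proposition~\ref*{prop:regularity}(3)} and then try to argue the orbit is eventually trapped in one piece. Two problems: (i) the lemma asserts the recursion from $n=1$, not merely eventually, so an ``eventually trapped'' reduction is already too weak; (ii) your sketch for why passing to an iterate preserves order $\le 2$ is incomplete, since on different pieces the affine function $c(f,-)$ and the M\"obius map may have genuinely different coefficients, and there is no a priori reason the composite over a cycle of pieces should be governed by a single $2\times 2$ matrix acting on the \emph{same} pair of coordinates.

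The point you are missing is this: because $E$ and $F$ meet transversely at a single point $p$, \emph{every} $\nu\in I$ is a monomial valuation at $p$ in fixed coordinates $(z,w)$, determined by a weight pair $(r,s)$. The invariance $f_\bullet I\subseteq I$ then says each $f_*^n\nu$ is again monomial at $p$, with weights $(r_n,s_n)$. The paper works with these \emph{unnormalized} weights directly and shows, via the intersection-theoretic identity $r_n=(Z_{f_*^n\nu,\pi}\cdot E)$ (and likewise for $s_n$) together with $Z_{f_*^n\nu,\pi}=f_*Z_{f_*^{n-1}\nu,\pi'}$, that $(r_n,s_n)=M(r_{n-1},s_{n-1})$ for a \emph{single} fixed integer matrix $M$ --- no piecewise structure at all. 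Then $c_n=b_Er_n+b_Fs_n$ is a fixed integer combination of the entries of $M^n(r_0,s_0)$, and Cayley--Hamilton gives $c_{n+2}=\mathrm{tr}(M)c_{n+1}-\det(M)c_n$ for all $n$. Your telescoping idea is really this same computation seen through the normalized (M\"obius) lens, but staying unnormalized is what makes the linearity, and hence the single global matrix, transparent.
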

\begin{proof}
Let $p = E\cap F$, and choose local coordinates $(z,w)$ of $X$ at $p$ such that $E = \{z = 0\}$ and $F = \{w = 0\}$.
Then any $\nu\in I$ is a monomial valuation at $p$ with some weights $r,s>0$ with respect to the coordinates $z$ and $w$.
Since $I$ is invariant, it follows that $f^n_*\nu$ is also a monomial valuation at $p$ with some weights $r_n, s_n$ with respect to the coordinates $z$ and $w$ for all $n\geq 1$.
In particular, one has that $Z_{f^n_*\nu, \pi} = r_n\check{E} + s_n\check{F}$ where we recall that $\check{E}\in \Div(\pi)$ denotes the unique exceptional divisor such that $(\check{E}\cdot E) = 1$ and $(\check{E}\cdot G) = 0$ for all exceptional primes $G\neq E$ of $\pi$ (with $\check{F}$ defined similarly).
Thus $r_n = (Z_{f^n_*\nu, \pi}\cdot E)$ and $s_n = (Z_{f^n_*\nu,\pi}\cdot F)$. 

We will now compute $r_n$ and $s_n$ in terms of $r_{n-1}$ and $s_{n-1}$.
Let $\pi'\colon X'\to \C^2$ be a blowup over $0$ dominating $\pi$ such that $f\colon X'\to X$ is holomorphic.
Let $\mu\colon X'\to X$ be the holomorphic map with $\pi' = \pi\circ \mu$.
By \cite[Lemma 1.10]{favre:holoselfmapssingratsurf}, $Z_{f^n_*\nu, \pi} = f_*Z_{f^{n-1}_*\nu, \pi'}$, so that
\[
r_n = (f_*Z_{f^{n-1}_*\nu,\pi'}\cdot E) = (f_*\mu^*(r_{n-1}\check{E} + s_{n-1}\check{F})\cdot E) = r_{n-1}(f_*\mu^*\check{E}\cdot E) + s_{n-1}(f_*\mu^*\check{F}\cdot E).
\]
Note that the quantities $(f_*\mu^*\check{E}\cdot E)$ and $(f_*\mu^*\check{F}\cdot E)$ are non-negative integers that do not depend on $n$.
One similarly obtains the formula
\[
s_{n} = r_{n-1}(f_*\mu^*\check{E}\cdot F) + s_{n-1}(f_*\mu^*\check{F}\cdot F).
\]
This proves there is a $2\times 2$ matrix $M$ of natural numbers such that $(r_n, s_n) = M(r_{n-1}, s_{n-1})$ for all $n\geq 1$.
To complete the proof, we note that
\[
c(f^n, \nu) = (f^n_*\nu)(\mf{m}) = -(Z_{f^n_*\nu, \pi}\cdot Z_{\ord_0,\pi}) = -r_n(\check{E}\cdot Z_{\ord_0,\pi}) - s_n(\check{F}\cdot Z_{\ord_0,\pi})  = b_Er_n + b_Fs_n.
\]
Thus the sequence $c_n = c(f^n, \nu)$ satisfies the integral linear recursion relation
\[
c_{n+2} = \mathrm{tr}(M)c_{n+1} - \det(M)c_n.
\]
This completes the proof.
\end{proof}

\begin{proof}[Proof of Theorem \ref{thm:recursioncases}]
We split the proof up into cases depending on the nature of the eigenvaluations of $f$.

\smallskip
\textbf{Case 1:}
First, let us assume that $f$ has an end eigenvaluation $\nu_\star$.
In this case $c_\infty$ is an integer, see \cite[Theorem 5.1]{favre-jonsson:eigenval}.
Moreover, there is a weak open neighborhood $U$ of $\nu_\star$ such that $c(f,-)$ is constant $\equiv c_\infty$ on $U$.
Since $f^n_\bullet\nu\to \nu_\star$ in the weak topology by \hyperref[thm:basins]{Theorem~\ref*{thm:basins}(1)}, one has $f^n_\bullet\nu\in U$ for large $n$.
Thus for large $n$ one has
\[
c_{n+1} = c(f^{n+1},\nu) = c(f, f^n_\bullet\nu)c(f^n,\nu) = c_\infty c_n,
\]
completing the proof in this case.

\smallskip
\textbf{Case 2:}
Now assume that $f$ has a unique, globally attracting irrational eigenvaluation $\nu_\star$.
As we saw in \S3.2, we can choose divisorial valuations $\nu_1<\nu_\star<\nu_2$ as close as desired to $\nu_\star$ such that the interval $I = [\nu_1,\nu_2]$ satisfies $f_\bullet I\Subset I$.
If $\vec{v}_i$ is the tangent vector at $\nu_i$ in the direction of $\nu_\star$, then by taking $I$ small enough we can assume $U := U(\vec{v}_1)\cap U(\vec{v}_2)$ is invariant and that $c(f^n,\nu) = c(f^n, r\nu)$ for all $n\geq 1$, where $r\colon U\to I$ is the retraction map.
Finally, we can choose the $\nu_i$ so that there exists a blowup $\pi\colon X\to \C^2$ over $0$ with exceptional primes $E_i$ intersecting transversely such that $\nu_i = b_{E_i}^{-1}\ord_{E_i}$, see \cite[Lemma 5.6]{favre-jonsson:eigenval}.
Thus \hyperref[lem:geometric]{Lemma~\ref*{lem:geometric}} applies to $I$, and we see that
\[
c_n := c(f^n,\nu) = c(f^n, r\nu)
\]
satisfies an integral linear recursion relation of order $2$ for all $\nu\in U$.
If $\nu\in \mc{V}_A$ is any valuation of finite thinness, then $f^n_\bullet\nu\in U$ for large enough $n$, and hence $c_n:= c(f^n,\nu)$ satisfies an integral linear recursion relation of order $2$ when $n$ is sufficiently large.

\smallskip
\textbf{Case 3:}
Now assume that $f$ has a unique, globally attracting divisorial eigenvaluation $\nu_\star$.
In this case,  $c_\infty = c(f,\nu_\star)$ is an integer by \cite[Proposition~2.5]{favre-jonsson:eigenval}.
Let $S_0$ be the collection of tangent vectors at $\nu_\star$ on which $c(f,-)$ is constant, and let $S_1$ be its (finite) complement.
Let $m$ be a common period for all periodic vectors in $S_1$; if there are no such periodic vectors, let $m =1$.
If $\nu\in \mc{V}_A$, then $f^{n}_\bullet\nu\to \nu_\star$ in the thinness topology.
There are three ways in which this convergence can happen:
\begin{enumerate}
\item[(a)] The sequence $\vec{v}_n$ of tangent vectors at $\nu_\star$ in the direction of $f^{n}_\bullet\nu$ lies in $S_1$ infinitely often.
\item[(b)] The sequence $\vec{v}_n$ lies in $S_0$ for all large enough $n$.
\item[(c)] $f^{N}_\bullet\nu = \nu_\star$ for some $N\geq 1$.
\end{enumerate}
In cases (b) and (c), one has $c(f,f^n_\bullet\nu) = c_\infty$ for large enough $n$, so that $c_{n+1} = c_\infty c_n$.
Suppose we are in case (a).
Then $f^{nm}_\bullet\nu\to \nu_\star$ along a fixed tangent direction $\vec{v}$.
From our work in \S3.3, we know there is a segment $I = [\nu_\star, \mu]$ in the direction of $\vec{v}$ such that $f^m_\bullet I\Subset I$ and $f^{mn}_\bullet I\to \nu_\star$.
If $\vec{w}$ is the tangent vector at $\mu$ in the direction of $\nu_\star$, then if $I$ is chosen small enough, $U = U(\vec{v})\cap U(\vec{w})$ is invariant, and $c(f,\tau) = c(f,r\tau)$ for all $\tau\in U$, where $r\colon U\to I$ is the retraction map.
Moreover, by choosing $\mu$ appropriately, there is a blowup $\pi\colon X\to \C^2$ over $0$ with exceptional primes $E$ and $F$ meeting transversely such that $\nu_\star = b_E^{-1}\ord_E$ and $\mu = b_F^{-1}\ord_F$.
Thus \hyperref[lem:geometric]{Lemma~\ref*{lem:geometric}} applies to $I$, and we see that $c(f^{nm}, \tau) = c(f^{nm},r\tau)$ satisfies an integral linear recursion relation of order $2$ for all $\tau\in U$.
For $n$ large enough, $f^{nm}_\bullet \nu\in U$, and hence $c_{nm} := c(f^{nm},\nu)$ satisfies an integral linear recursion relation of order at most $2$ for large $n$.

\smallskip
\textbf{Case 4:}
Finally, assume $f$ is such that there is a non-degenerate segment of eigenvaluations of $f^2$.
Let $I$ be the maximal (under inclusion) segment of fixed points for $f^2_\bullet$, let $r\colon \mc{V}\to I$ be the retraction, and let $\nu_\star$ be the minimal element of $I$.
One has $c(f^2, -)\equiv c_\infty^2$ along $I$.
Moreover $c_\infty^2$ is an integer, since $c(f^2,\nu)$ is an integer for any divisorial eigenvaluation of $f^2$ by \cite[Proposition~2.5]{favre-jonsson:eigenval}.
If $\nu\in I$ or $r(\nu)\neq \nu_\star$, then $c(f^2,\nu) = c_\infty^2$, so that $c_{2n} := c(f^{2n},\nu) = c_\infty^2c_{2(n-1)}$.
Now suppose that $\nu\in \mc{V}_A$ with $\nu\neq \nu_\star$ and $r(\nu) = \nu_\star$.
Let $S_0$ be the set of tangent vectors at $\nu_\star$ on which $c(f^2,-)$ is constant, and let $S_1$ be its (finite) complement.
Let $m\geq 1$ be a common period of all $f^2$-periodic vectors in $S_1$; if there are no such periodic vectors set $m = 1$.
We know that $f^{2n}_\bullet\nu\to \nu_\star$ in the thinness topology.
Since $\nu_\star$ is $f^2_\bullet$-totally invariant, there are only two possibilities for this convergence:
\begin{enumerate}
\item[(a)] The sequence $\vec{v}_n$ of tangent directions at $\nu_\star$ in the direction of $f^{2n}_\bullet\nu$ lies in $S_1$ infinitely often.
\item[(b)] The sequence $\vec{v}_n$ lies in $S_0$ for large enough $n$.
\end{enumerate}
Case (b) here is analogous to case (b) in the previous paragraph, and one obtains $c_{2n} = c_\infty^2 c_{2(n-1)}$ for all sufficiently large $n$.
Case (a) is analogous to case (a) in the previous paragraph, and one obtains that $c_{2mn}$ satisfies an integral linear recursion relation of order at most $2$ for large $n$.
\end{proof}

\begin{rem}
We should note that for holomorphic germs $f\colon (\C^2,0)\to (\C^2,0)$ that are not superattracting, \hyperref[thm:recursionrelation]{Theorem~\ref*{thm:recursionrelation}} is trivial, since $c(f^n,\nu)=1$ for any $n$ and any $\nu\in \mc{V}$.
In particular, $f_\bullet = f_*$ in this case.

There is a dichotomy in the dynamics of $f_*$ in the non-superattracting case, depending on the nature of the differential $df_0$: either $df_0$ is invertible, or $df_0$ has exactly one non-zero eigenvalue.
In the latter case, one can show (see \cite[Theorem 0.7]{ruggiero:rigidification}) that there exists a unique curve eigenvaluation $\nu_\star$, and every other valuation $\nu \in \mc{V}$ is (weakly) attracted to $\nu_\star$, except for at most one curve eigevaluation $\nu_D$, associated to the ``stable manifold'' of the eigenvalue $0$ of $df_0$.
This valuation is fixed (and repelling) if $f(D)=D$, or attracted to $\nu_\star$ if $f(D)=0$, i.e., if it is a contracted curve valuation.

If $f$ is invertible, $f_*\colon \mc{V}\to \mc{V}$ is a bijection, and skewness and thinness are preserved by the action of $f_*$.
In the case of the skewness, this can be seen directly from \hyperref[def:skewthin]{Definition~\ref*{def:skewthin}} and the fact that $f^*\mf{m} = \mf{m}$.
For thinness, it can be seen from the \hyperref[jacobian_formula]{Jacobian formula}.
The set $S$ of all $f_*$-periodic $\nu\in \mc{V}$ forms a (not necessarily finite) subtree of $\mc{V}$ containing $\ord_0$.
For all $\nu\in \mc{V}$, there is an $m\geq 1$ and a $\nu_\star\in S$ of period $m$ such that $f^{mn}_*\nu\to \nu_\star$ weakly as $n\to \infty$.
Because skewness and thinness are preserved by $f_*$, this cannot be strengthened to convergence in either the thinness or skewness topologies.
\end{rem}

\section{Examples}

In this section we provide some worked examples of deriving the recursion formula for $c_n := c(f^n)$ from the dynamics of $f$ on the valuative tree $\mc{V}$.
In Examples \ref{ex:mono1}, \ref{ex:mono2}, \ref{ex:mono3}, \ref{ex:mono4} and \ref{ex:mono5} we consider \emph{monomial maps} $f$, that is, germs of the form $f(x,y) = (x^ay^b,x^cy^d)$ for some $a,b,c,d\in \N$ with $ad - bc\neq 0$.
These examples are simple enough that the sequence $c_n$ can be easily studied without appealing to dynamics on $\mc{V}$.
Nonetheless, for illustrative purposes we will study their dynamics.
In the remaining examples we consider some non-monomial germs; the dynamics in Examples \ref{ex:kato} and \ref{ex:segment} are fairly straightforward to analyze, while those in Examples \ref{ex:CAR}, \ref{ex:high_order}, and \ref{ex:high_order2} are more complicated.
The germ considered in \hyperref[ex:CAR]{Example~\ref*{ex:CAR}} is the same as that in \cite[Example 4.1]{casasalvero-roe:iteratedinverseimages}; we include it so that their methods can be compared to those of the present article.
In Examples \ref{ex:high_order} and \ref{ex:high_order2} we show that minimal order $m$ of a linear recurrence eventually satisfied by the $c_n$ can be arbitrarily large; indeed, it can be any positive integer.
Finally, in \hyperref[ex:not_stable]{Example~\ref*{ex:not_stable}} we give an example of a germ with no local algebraically stable model.

Monomial maps $f(x,y) = (x^ay^b,x^cy^d)$ necessarily preserve \emph{monomial valuations} in the coordinates $(x,y)$.
A monomial valuation in the coordinates $(x,y)$ is a valuation $\nu_{s,t}\in\hat{\mc{V}}^*$ of the form
\[
\nu_{s,t}\left(\sum \lambda_{\alpha\beta} x^\alpha y^\beta\right) := \min\{\alpha s + \beta t : \lambda_{\alpha\beta}\neq 0\},
\]
where $s,t>0$.
One easily checks that $f_*\nu_{s,t} = \nu_{as + bt, cs+dt}$.
The normalized monomial valuations $\nu_{s,t}\in \mc{V}$ are those for which $\min\{s,t\} = 1$; they make up the segment $(\nu_x, \nu_y)$ in $\mc{V}$.
Thus $f_\bullet$ maps this segment into itself.

\begin{ex}\label{ex:mono1}
Let $f(x,y) = (x^d,y^d)$, where $d\geq 2$.
For this monomial map, every point of the segment $[\nu_x, \nu_y]$ is fixed for $f_\bullet$.
In particular, $\ord_0$ is fixed, so that
\[
c_{n} = c(f^n,\ord_0) = c(f,\ord_0)^n = d^n.
\]
Thus the sequence $c_n$ satisfies the order 1 recursion relation $c_{n+1} = dc_n$.
\end{ex}

\begin{ex}\label{ex:mono2}
Let $f(x,y) = (y^b,x^c)$ for $b,c\geq 2$.
Then $f^2(x,y) = (x^{bc}, y^{bc})$, and hence every point of the segment $[\nu_x,\nu_y]$ is fixed by $f_\bullet^2$.
It follows that
\[
c_{n+2} = c_2 \cdot c(f^n, f^2_\bullet\ord_0) = bc\cdot c(f^n, \ord_0) = bc\cdot c_n,
\]
an order 2 recursion relation.
Notice that when $b=c$, then $\ord_0$ is fixed by $f_\bullet$, and the sequence $c_n$ satisfies the order 1 recursion relation $c_{n+1}=b c_n$.
\end{ex}

\begin{ex}\label{ex:mono3}
Let $f(x,y) = (x^a,y^d)$ where $a>d\geq 2$.
In this case $\nu_x$ is the unique curve eigenvaluation for $f$.
Moreover, the segment $I = [\ord_0,\nu_x]$ is invariant in the sense that $f_\bullet I\subseteq I$.
Since $c(f,-)\equiv d$ on $I$, it follows that
\[
c_{n+1} = c_n\cdot c(f, f^n_\bullet\ord_0) = dc_n,
\]
an order $1$ recursion relation.
\end{ex}

\begin{ex}\label{ex:mono4}
Let $f(x,y) = (x^2y,x^2y^3)$.
The divisorial valuation $\nu_{1,2}$ is the unique eigenvaluation for $f$.
Moreover, the segment $I = [\ord_0,\nu_{1,2}]$ satisfies $f_\bullet I\subset I$.
We know by \hyperref[lem:geometric]{Lemma~\ref*{lem:geometric}} that this implies that $c_n$ satisfies a recursion relation of order 2.
This relation can be derived as follows.
In the skewness parameterization, $I$ is identified with the interval $[1,2]\subset\R$, and $f_\bullet \colon I\to I$ is identified with the map $\varphi\colon [1,2]\to [1,2]$ given by
\[
\varphi(t) = \frac{2+3t}{2+t}.
\]
Similarly, $c(f,-)\colon I\to \R$ is given by $t\in [1,2]\mapsto 2+t$.
If we set $t_n:= \alpha(f^n_\bullet \ord_0)$, one then has
\begin{align*}
c_{n+2} & = c_n\cdot c(f,f^{n}_\bullet\ord_0)\cdot c(f,f^{n+1}_\bullet\ord_0) = c_n(2+t_n)(2+t_{n+1}) = c_n(2+t_n)\left(2 + \frac{2+3t_n}{2+t_n}\right)\\
& = (6 + 5t_n)c_n = (5(2+ t_n) - 4)c_n = 5c_{n+1} - 4c_n.
\end{align*}
This recursion relation is easily verified by hand.
\end{ex}

\begin{ex}\label{ex:mono5}
Let $f(x,y) = (xy, x^2y)$.
The irrational valuation $\nu_{1,\sqrt{2}}$ is the unique eigenvaluation for $f$.
Similarly to \hyperref[ex:mono4]{Example~\ref*{ex:mono4}}, the interval $I = [\ord_0,\nu_{1,2}]$ is invariant for $f_\bullet$.
With respect to the skewness parameterization one has $I\cong [1,2]\subset\R$, and $f_\bullet\colon I\to I$ is given by $\varphi\colon [1,2]\to [1,2]$, 
\[
\varphi(t) = \frac{2+t}{1+t}.
\]
Furthermore, $c(f,-)\colon I\to \R$ is given by $t\in [1,2]\mapsto 1+t$.
By a similar derivation as was carried out in \hyperref[ex:mono4]{Example~\ref*{ex:mono4}}, one can see $c_{n+2} = 2c_{n+1}  + c_n$.
\end{ex}

In the remaining examples, we will use the following notation. If $\phi\in \mf{m}$ is irreducible and $t\geq 1$, we let $\nu_{\phi,t}$ denote the unique valuation in the segment $[\ord_0, \nu_\phi]$ with skewness $t$.

\begin{ex}\label{ex:kato}
We now consider a germ $f$ with topological degree $1$:
\[
f(x,y)=\left(\frac{x^2}{1+xy},\frac{x}{1+xy}\right).
\]
Such a germ is called a \emph{strict germ}.
These germs are used in the construction of Kato surfaces, and can be decomposed as $f = \pi\circ \sigma$, where $\pi\colon X\to (\C^2,0)$ is a blow-up over the origin, and $\sigma\colon (\C^2,0)\to X$ is a local biholomorphism onto a neighborhood of a smooth point $\sigma(0)\in X$, see \cite{dloussky:phdthesis}.
Superattracting strict germs are classified in \cite{favre:rigidgerms}.
For this particular germ $f$, one can take $\pi\colon X\to (\C^2,0)$ to be a composition of three point blowups. 

It can be shown that $f$ has a unique, infinitely singular eigenvaluation.
Moreover, if $\vec{v}$ is the tangent vector at $\nu_{x,2}$ in the direction of $\nu_{x - y^2}$, then $f_\bullet \mc{V}\Subset U(\vec{v})$.
Since $c(f,-)\equiv 2$ on $U(\vec{v})$, it follows that $c_{n+1} = c_n\cdot c(f, f^{n}_\bullet \ord_0) = 2c_n$ for all $n\geq 1$. 
\end{ex}

\begin{ex}\label{ex:segment}
Let $f(x,y) = (y+x^2,y^2)$.
For this germ, the entire segment $[\nu_{y,2}, \nu_y]$ is pointwise fixed by $f_\bullet$, and the function $c(f,-)$ is locally constant away from the segment $[\ord_0, \nu_{y+x^2,4}]$.
By \hyperref[prop:tot_inv]{Proposition~\ref*{prop:tot_inv}}, every point of $[\nu_{y,2},\nu_y]$ is totally invariant for $f_\bullet$.
If $\vec{v}_n$ is the tangent vector at $\nu_{y,2}$ in the direction of $f^n_\bullet\ord_0$ for each $n\geq 0$, it follows that $\vec{v}_n = df_\bullet^n\vec{v}_0$.
We will show that $c(f,-)\equiv 2$ on $U(\vec{v}_n)$ for all $n\geq 1$, and thus that $c(f, f^n_\bullet\ord_0) = 2$ for all $n\geq 1$.
To do this, we will explicitly compute the tangent map $df_\bullet$.

One can identify the tangent space at $\nu_{y,2}$ with $\pr^1_\C = \C\cup\{\infty\}$ as follows.
For each tangent vector $\vec{v}\neq \vec{v}_0$, there is a unique $\theta\in \C$ such that $\nu_{y - \theta x^2}$ lies in the direction $\vec{v}$.
Identifying this $\vec{v}$ with $\theta$ and $\vec{v}_0$ with $\infty$, the tangent map $df_\bullet\colon \pr^1\to \pr^1$ is a rational map.
One can easily verify by hand that $f(\{y  = \theta x^2\}) = \{y = R(\theta)x^2\},$ where $R(\theta) = \theta^2/(1+\theta)^2$.
Thus the tangent map $df_\bullet$ is given map $\theta\mapsto R(\theta)$.
There are only two tangent directions at $\nu_{y,2}$ in which $c(f,-)$ is not constant, namely those corresponding to $\theta = \infty$ and $\theta = -1$.
It is easy to check that $R^n(\infty)\neq \infty, -1$ for each $n\geq 1$, and thus $c(f,-)$ is constant in the direction $\vec{v}_n$ identified with $R^n(\infty)$ for each $n\geq 1$, proving that $c(f, f^n_\bullet\ord_0) = c(f,\nu_{y,2}) = 2$ for $n\geq 1$.
It follows that $c_{n+1} = c_n\cdot c(f, f^n_\bullet \ord_0) = 2c_n$ for all $n\geq 1$.
\end{ex}

\begin{ex}[{\cite[Example 4.1]{casasalvero-roe:iteratedinverseimages}}]\label{ex:CAR}
Let $f(x,y) = (y^2, y^4 - x^5)$.
For this germ, there is a finite subtree $T$ of $\mc{V}$ that is invariant for $f_\bullet$, namely the tree with endpoints $\ord_0$, $\nu_{y,5/2}$, $\nu_{y-x^2, 5/2}$, and $\nu_{y^4-x^5, 41/32}$, see \hyperref[fig]{Figure~\ref*{fig}} for an illustration of $T$, as well as a convenient labeling of edges of $T$.
The action of $f_\bullet$ on $T$ takes edge I to edge II, edge II to edge III, edge III to edge IV, and edge IV to edge I.
Edge V is mapped into itself by $f_\bullet$, and is pointwise fixed by $f^2_\bullet$.
In terms of the skewness parameterization, one can compute the action of $f_\bullet$ on $T$ to be
\[
\alpha(f_\bullet\nu) = \begin{cases}5/(2\alpha(\nu)) & \nu\mbox{ in edge I, IV, or V.}\\
(\alpha(\nu) + 18)/16 & \nu\mbox{ in edge II.}\\
8\alpha(\nu)/5 & \nu\mbox{ in edge III.}
\end{cases}
\]
Moreover, one has
\[
c(f,\nu) = \begin{cases} 2\alpha(\nu) & \nu\mbox{ in edge I, IV, or V.}\\ 4 & \nu\mbox{ in edge II.}\\
5/2 & \nu\mbox{ in edge III.}\end{cases}
\]
For the map $g = f^4$, each edge is invariant.
Moreover, using the above data it is easy to derive that $c(g,\nu) = 100$ for $\nu$ in edges II-V and that $c(g,\nu) = 10+72\alpha(\nu)$ for $\nu$ in edge I.
In the skewness parameterization, $g_\bullet$ is given for $\nu$ in edge I by $\alpha(g_\bullet\nu) = 50\alpha(\nu)/(5 + 36\alpha(\nu))$.
Thus by computations similar to those in Examples \ref{ex:mono4} and \ref{ex:mono5}, one sees $c(g^{n+2}, \nu) = 110c(g^{n+1},\nu) - 1000c(g^n,\nu)$ for $\nu$ in edge I.
In fact, since $c(g, \nu)\equiv 100$ for $\nu$ not in edge I, one also has vacuously that the relation $c(g^{n+2},\nu) = 110c(g^{n+1},\nu) - 1000c(g^n,\nu)$ holds for such $\nu$.
This proves that the sequence $c_n = c(f^n,\ord_0)$ satisfies the recursion relation $c_{n+8} = 110c_{n+4} - 1000c_n$.
\end{ex}

\begin{figure}[t]
\capstart
\begin{pspicture}(-7,-2)(7,3)
\psline(-6,-1)(6,-1)
\psline(-3,-1)(-3,2)
\psline(3,-1)(3,2)
\psdot(-6,-1)
\rput(-6,-1.3){$\ord_0$}
\psdot(6,-1)
\rput(6,-1.3){$\nu_{y,5/2}$}
\psdot(-3,-1)
\rput(-3,-1.3){$\nu_{y,5/4}$}
\psdot(3,-1)
\rput(3,-1.3){$\nu_{y,2}$}
\psdot(3,2)
\rput(3,2.3){$\nu_{y-x^2,5/2}$}
\psdot(-3,2)
\rput(-3,2.3){$\nu_{y^4 - x^5,41/32}$}
\rput(-4.5,-1.5){I}
\rput(3.5,.5){II}
\rput(-3.5,.5){III}
\rput(4.5,-1.5){IV}
\rput(0,-1.5){V}
\end{pspicture}
\caption{\label{fig}An illustration of the invariant finite tree $T\subset\mc{V}$ for the germ $f(x,y) = (y^2, y^4 - x^5)$ of \hyperref[ex:CAR]{Example~\ref*{ex:CAR}}, with edge labelings I-V. Note that the edge lengths are \emph{not} drawn to scale in the skewness parameterization.}
\end{figure}
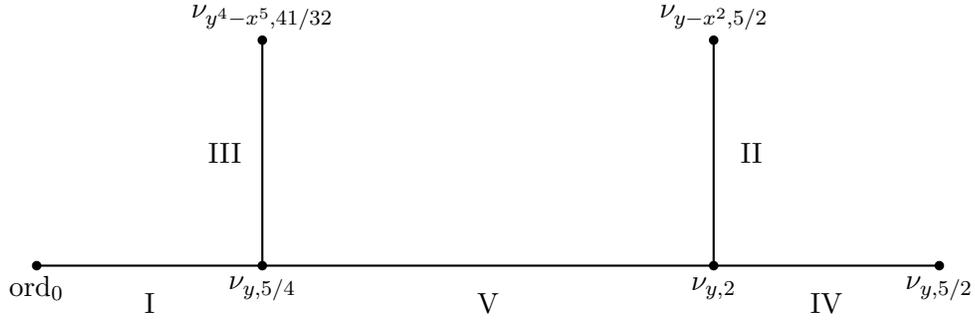

\begin{ex}\label{ex:high_order}
Let $\zeta\in \C$ be a primitive $m$-th root of unity, with $m\geq 2$.
We now consider the germ $f(x,y) = (\zeta x(x +  y^2), x + y^2)$.
There is exactly one eigenvaluation for $f$, namely the divisorial valuation $\nu_{x,2}$, which is in fact totally invariant for $f_\bullet$.
As we did in \hyperref[ex:segment]{Example~\ref*{ex:segment}}, one can identify the tangent space at $\nu_{x,2}$ with $\C\cup\{\infty\}$ by associating $\theta\in \C$ with the tangent vector in the direction of $\nu_{x - \theta y^2}$.
The tangent vector in the direction of $\ord_0$ corresponds to $\theta = \infty$.
Under this identification, the tangent map $df_\bullet$ at $\nu_{x,2}$ is given by the M\"{o}bius transformation $R(\theta) = \zeta\theta/(\theta + 1)$.
It is easy to check that $R$ has finite order $m$, and, in particular, that $\infty$ is a vector of period $m$.
In fact, if $\theta_n := R^n(\infty)$ for $n = 1,\ldots, m-1$, then the  edges $[\ord_0,\nu_{x,2}]$, $[\nu_{x,2},\nu_{x - \theta_
1y^2}]$, $\ldots$, $[\nu_{x,2},\nu_{x - \theta_{m-1}y^2}]$ are mapped into each other cyclically by $f_\bullet$.
One can show that
\[
\alpha(f_\bullet \nu) = \begin{cases} 1 + 2/\alpha(\nu) & \nu\in [\ord_0,\nu_{x,2}]\mbox{ or }[\nu_{x,2}, \nu_{x - \theta_{m-1}y^2}],\\ 
1 + \alpha(\nu)/2 & \nu\in [\nu_{x,2}, \nu_{x - \theta_j y^2}]\mbox{ for some }j = 1,\ldots, m-2,
\end{cases}
\]
and that
\[
c(f,\nu) = \begin{cases} \alpha(\nu) & \nu\in [\ord_0,\nu_{x,2}]\mbox{ or }[\nu_{x,2}, \nu_{x - \theta_{m-1}y^2}],\\ 
2 & \nu\in [\nu_{x,2}, \nu_{x - \theta_j y^2}]\mbox{ for some }j = 1,\ldots, m-2.
\end{cases}
\]
Using this data, it follows by the same arguments used in Examples \ref{ex:mono4}, \ref{ex:mono5}, and \ref{ex:CAR} that the sequence $c_n = c(f^n,\ord_0)$ satisfies the recurrence relation $c_{n} = (2^m+1)c_{n-m} + 2^mc_{n-2m}$ of order $2m$. 

It turns out, however, that $2m$ is not the minimal order of a linear recurrence satisfied by $c_n$.
It is possible to compute the $c_n$ using the above methods to be
\[
c_{km+r} = \begin{cases} \displaystyle2^{r-1}\frac{2^{(k+1)m}-1}{2^m - 1} & r = 1,\ldots, m-1, \bigskip\\
\displaystyle \frac{1}{2} + \frac{1}{2}\cdot \frac{2^{(k+1)m} - 1}{2^m - 1} & r = 0.\end{cases}
\]
The formal power series $\sum_{n=1}^\infty c_nt^n$ is exactly the Taylor expansion of
\[
\varphi(t) = \frac{1}{(1 - 2t)(1 + t + \cdots + t^{m-1})} - 1.
\]
It follows that the smallest order of a recursion relation satisfied by the sequence $c_n$ is $m$, and this recursion relation has characteristic polynomial $(t - 2)(1 + t + \cdots + t^{m-1})$.
\end{ex}

\begin{ex}\label{ex:high_order2}
Fix $d \geq 2$, $m\geq 1$ and let $\zeta$ be a primitive $(d^{m}-1)$-th root of unity.
We consider the germ $f(x,y)=(x^d (y-\zeta x), y^d (y-\zeta x))$.
The only eigenvaluation for this germ is $\ord_0$.
It follows immediately that $c(f^n,\ord_0) = (d+1)^n$, and hence that $c_n$ satisfies a linear recurrence of order $1$.
However, as we shall see, this will not be true for the sequence $c(f^n,\nu)$ for some $\nu\neq \ord_0$.

Identify the tangent space at $\ord_0$ with $\C\cup\{\infty\}$ by associating $\theta\in \C$ with the tangent vector in the direction of $\nu_{y - \theta x}$.
The tangent vector in the direction of $\nu_x$ corresponds to $\theta = \infty$.
With this identification, the tangent map $df_\bullet$ at $\ord_0$ is given by $R(\theta) = \theta^d$.
In particular, the tangent direction $\theta = \zeta$ is $m$-periodic, putting us in essentially the same situation considered in \hyperref[ex:high_order]{Example~\ref*{ex:high_order}}.
Using the same techniques, one can show that if $\nu = \nu_{y-\zeta x, \rho}$ for $\rho>1$, then the sequence $c_n:= c(f^n, \nu)$ satisfies the linear recurrence $c_n = ((d+1)^m + 1)c_{n-m} - (d+1)^mc_{n-2m}$ of order $2m$.
Once again, $2m$ is not the minimal order recurrence satisfied by the $c_n$.
The formal power series $\sum_{n=1}^\infty c_nt^n$ is the Taylor expansion of
\[
\varphi(t) = \frac{(d+1)t}{1 - (d+1)t} + \frac{(\rho - 1)t}{(1 - (d+1)t)(1 - t^m)},
\]
from which it follows that the minimal order of a linear recurrence satisfied $c_n$ is $m+1$, and its characteristic polynomial is $(t - (d+1))(t^m - 1)$.
\end{ex}

\begin{ex}\label{ex:not_stable}
In this example, we will consider the family of germs $f$ of the form
\[
f(x,y) = \big((y + \eps x^3)(cy + dx^2 + \eps x^3), y^2(ay+bx^2)(cy+dx^2)\big),
\]
where the constants $a,b,c,d,\eps\in \C$  satisfy the following properties:
\begin{enumerate}
\item[1.] $cd\eps\neq 0$. This ensures that $f$ is finite, and that the curves defined by $y$, $y + \eps x^3$, $cy + dx^2$, and $cy + dx^2 + \eps x^3$ are distinct.
\item[2.] The map $h(\theta) := (a\theta + b)/(c\theta + d)$ is a M\"{o}bius transformation such that $\infty$ is not preperiodic, and such that $h^N(\infty) = 0$ for some $N\geq 1$.
\end{enumerate} For such a germ $f$, the divisorial valuation $\nu_{y,2}$ is a globally attracting eigenvaluation; moreover, if we identify the tangent space at $\nu_{y,2}$ with $\pr^1$ by associating $\theta\in \C$ with the tangent direction defined by the curve $y - \theta x^2$, then the tangent map $df_\bullet$ at $\nu_{y,2}$ is given by the M\"{o}bius map $h(\theta)$.
One can compute that $c(f,-)$ is $\equiv 4$ in all the tangent directions $\theta \neq \infty, 0, -d/c$, and that $f^n_\bullet \ord_0$ lies in the tangent direction $h^n(\infty)$ for all $n\geq 0$.
The point $f^N_\bullet \ord_0$ lies in the segment $[\nu_{y,2}, \nu_{y,3}]$, and by using methods similar to those in the other examples, one can check that $\alpha(f^N_\bullet \ord_0) = 2 + 2^{-1}4^{1-N}$ and $c(f, f^N_\bullet\ord_0) = 2 + \alpha(f^N_\bullet\ord_0) = 4 + 2^{-1}4^{1-N}$.
Combining these data yields that
\[
c(f^n) = \begin{cases} 2\cdot 4^{n-1} & n \leq N,\\ (2\cdot 4^N + 1)4^{n-N-1} & n>N.\end{cases}\]
This sequence \emph{eventually} satisfies an integral linear recurrence formula (of order $1$), but does not satisfy any integral linear recurrence formula for all $n\geq 1$.
One way to check this is to note that if the sequence $c_n := c(f^n)$ satisfied a recurrence for all $n\geq 1$, then the formal power series $\sum_{n=1}^\infty c_nt^n$ would represent a rational function $P/Q$ with $\deg P \leq \deg Q$.
Direct computation, however, shows that that $\sum_{n=1}^\infty c_nt^n = (2t + t^{N+1})/(1-4t)$ does not satisfy this property.
In light of the observations made at the end of \S5, it follows that there does not exist a local algebraically stable model $\pi\colon X\to (\C^2,0)$ for $f$.
\end{ex}

\bibliographystyle{alpha}
\bibliography{biblio}

\end{document}